\documentclass[reqno]{amsart}
\usepackage{setspace}
\usepackage{amsmath,amsthm,amssymb} 
\usepackage{color}
\usepackage{comment}
\usepackage[margin=1in]{geometry}
\usepackage{graphicx}
\usepackage{float}
\usepackage{mathrsfs}
\usepackage{yfonts}
\usepackage{enumitem}
\usepackage{lscape}
\usepackage{subfigure}
\usepackage{bm}
\usepackage{tikz}
\usetikzlibrary{arrows.meta} 
\usepackage[numbers,sort&compress]{natbib}

\numberwithin{figure}{section}
\numberwithin{equation}{section}
\newtheorem{rhp}{Riemann-Hilbert Problem}[section]
\newtheorem{thm}{Theorem}[section]
\newtheorem{lem}{Lemma}[section]
\newtheorem{prop}{Proposition}[section]
\theoremstyle{remark}
\newtheorem{rmk}{\bf Remark}[section]

\newcommand{\RNum}[1]{\uppercase\expandafter{\romannumeral #1\relax}}

\topmargin -40pt \evensidemargin -10pt \oddsidemargin -10pt
\textheight 210 mm \textwidth 160 mm

\begin{document}

\title{\bf Long-time asymptotics of the defocusing mKdV equation\\with step initial data}

\author{Deng-Shan Wang}
\address{Deng-Shan Wang: Laboratory of Mathematics and Complex Systems (Ministry of Education), School of Mathematical Sciences, Beijing Normal University, Beijing 100875, China}
\email{dswang@bnu.edu.cn}
\author{Ding Wen}
\address{Ding Wen: Laboratory of Mathematics and Complex Systems (Ministry of Education), School of Mathematical Sciences, Beijing Normal University, Beijing 100875, China}
\email{wd2001@mail.bnu.edu.cn}

\subjclass[2010]{Primary  35Q15, 35Q51, 35Q55}

\date{\today}

\keywords{mKdV equation, Riemann-Hilbert problem, long-time asymptotics, dispersive shock waves}

\begin{abstract}

This work investigates the long-time asymptotics of solution to defocusing modified Korteweg-de Vries equation with a class of step initial data. A rigorous asymptotic analysis is conducted on the associated Riemann-Hilbert problem by applying
Deift-Zhou nonlinear steepest descent method. In this process, the construction of odd-symmetry $g$-function is generalized and the method of genus reduction on the Riemann-theta function is proposed via conformal transformation and symmetries. It is revealed that for sufficiently large time, the solution manifests a tripartite spatiotemporal structure, i.e., in the left plane-wave region, the solution decays to a modulated plane wave with oscillatory correction; in the central dispersive shock wave region, the solution is governed by a modulated elliptic periodic wave; in the right plane wave region, the solution converges exponentially to a constant. The results from the long-time asymptotic analysis have been shown to match remarkably well with that obtained by direct numerical simulations.

\end{abstract}

\maketitle

\setcounter{tocdepth}{1}

\tableofcontents

\section{Introduction}

In this paper, we investigate the Cauchy problem of the defocusing modified Korteweg-de Vries (mKdV) equation 
\begin{equation}\label{mkdv}
q_t(x,t)-6q^2(x,t)q_x(x,t)+q_{xxx}(x,t)=0, \quad (x,t)\in\mathbb{R}\times \mathbb{R}_{\ge 0},
\end{equation} 
with a sharp constant step initial data $q(x,0)$ of the form
\begin{equation}\label{initial}
	q(x,0)=\begin{cases}
		q_l, \quad x< 0, \\ q_r, \quad x> 0, 
	\end{cases}
\end{equation}
where $q_l$ and $q_r$ are two real constants. The main focus of this work is to derive the precise asymptotic expression of the unique classical solution as $t\to\infty$. In addition, to ensure well-posedness of the Cauchy problem for (\ref{mkdv}) with (\ref{initial}), the following integrability condition must be imposed: 
\begin{equation}
	||q(\cdot,t)-q_r||_{L^1(\mathbb{R}^+)} + ||q(\cdot,t)-q_l||_{L^1(\mathbb{R}^-)} < \infty, \quad \forall t\ge 0.
\end{equation}
\par
Noticing that the transformation $q(x,t)\mapsto-q(x,t)$ keeps the equation (\ref{mkdv}) unchanged, it suffices to consider the case of $q_l\ge 0$.  However, when the values of $q_l\ge 0$ and $|q_r|$ are fixed, the cases $q_r> 0$ and $q_r< 0$ lead to a significantly different asymptotic analysis. Figure \ref{kink} shows the numerical simulations of the defocusing mKdV equation (\ref{mkdv}) with initial condition (\ref{initial}) for various parameters. It is observed that the case of $q_r< 0$ introduces an additional kink soliton region compared to the case of $q_r>0$. In the present work, we restrict ourselves to the case of $q_r\ge q_l\ge 0$, while a concise discussion of the cases $q_r\ge-q_l \ge 0$ and $-q_l \ge q_r\ge 0$ is provided in Remark \ref{a(0)=0}. For the case of $q_l\ge q_r \ge 0$, relevant results can be found in \cite{Xu2021}. 
\begin{figure}[htbp]
	\centering
	\subfigure[$q_l=0.2$, $|q_r|=0.8$, $t=15$]{
		\begin{minipage}[b]{0.45\textwidth}  
			\centering
			\includegraphics[width=\linewidth]{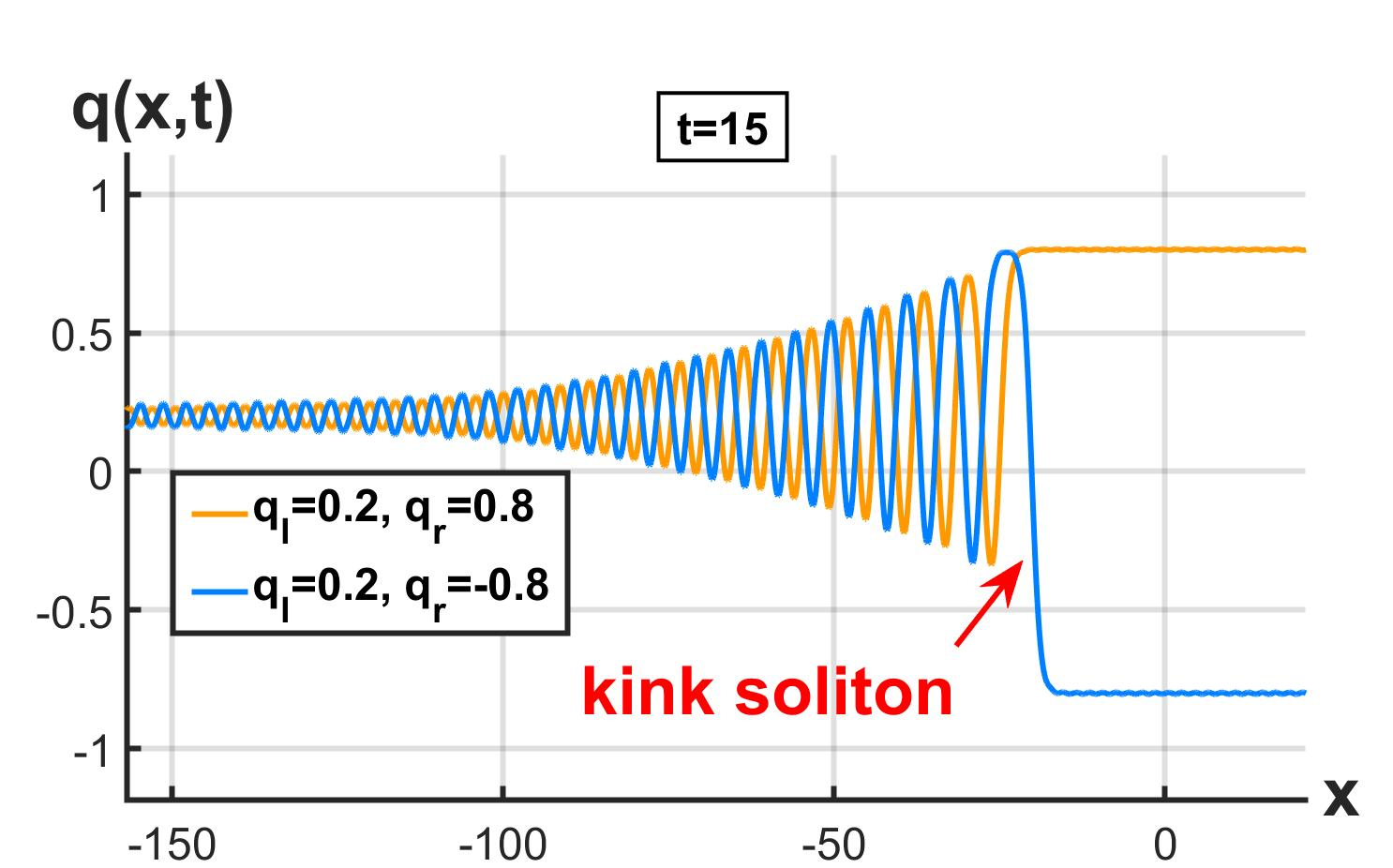}  
		\end{minipage}
	}
	\hfill  
	\subfigure[$q_l=0.8$, $|q_r|=0.2$, $t=15$]{
		\begin{minipage}[b]{0.48\textwidth}  
			\centering
			\includegraphics[width=\linewidth]{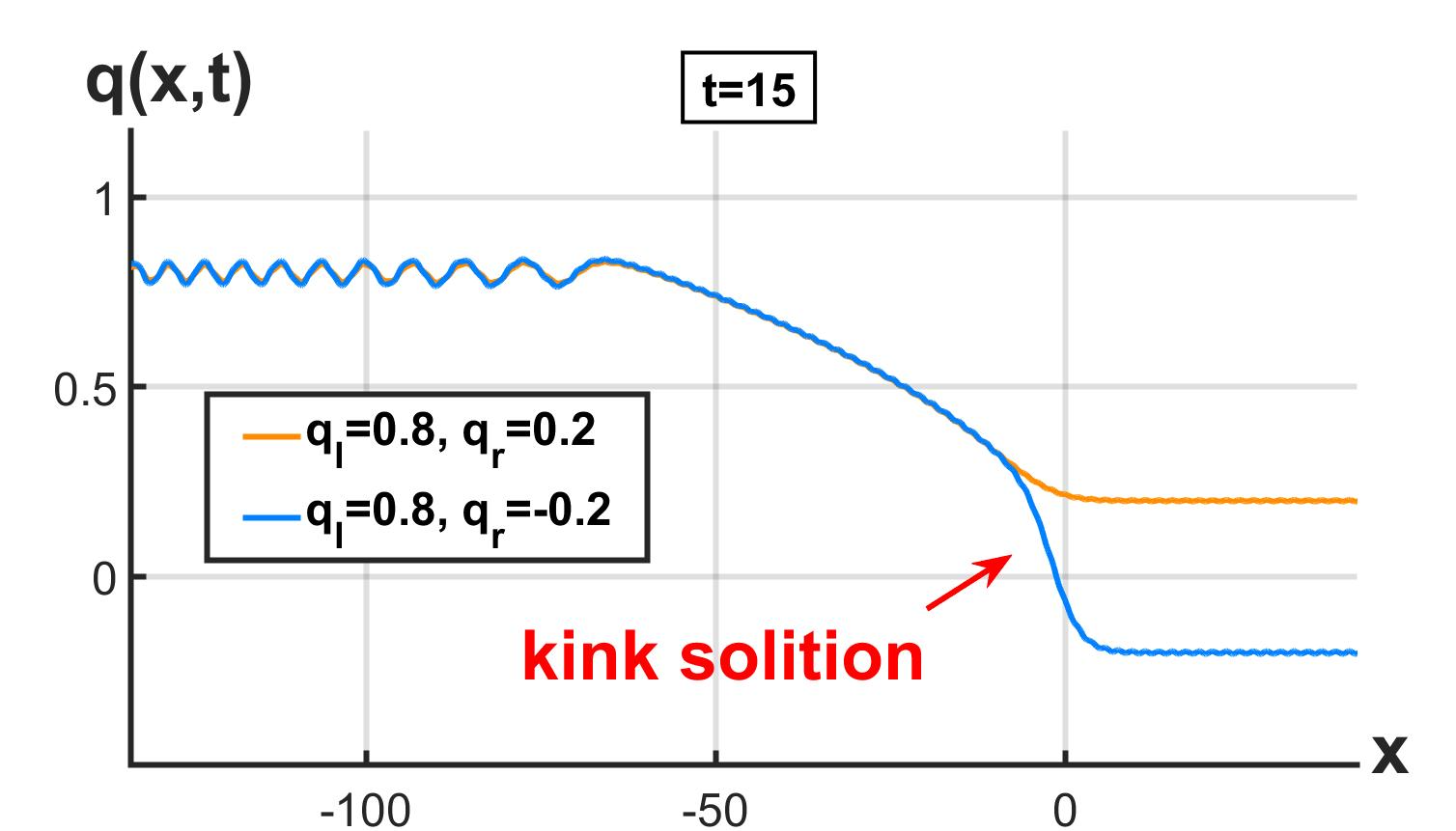}
		\end{minipage}
	}
	\caption{Direct numerical simulations of the defocusing mKdV equation (\ref{mkdv}) with initial condition (\ref{initial}) for various parameters.}  
	\label{kink}  
\end{figure}
\par
The defocusing mKdV equation (\ref{mkdv}) constitutes a fundamental mathematical model that emerges in diverse physical systems, including waves in shallow two-layer fluids \cite{Kakutani1978,JFM1984}, ion acoustic waves in a plasma with negative ions \cite{Watanabe1984}, and a two-electron-temperature plasma \cite{Tajiri1985}. Furthermore, the canonical Miura transformation \cite{Miura1968} 
\begin{equation}
	u(x,t)=q_x(x,t)-q^2(x,t)
\end{equation}
establishes the relation between solutions of (\ref{mkdv}) and the famous KdV equation
\begin{equation}
u_t(x,t)+6u(x,t)u_x(x,t)+u_{xxx}(x,t)=0. 
\end{equation}
\par
The investigation of long-time asymptotics for the Cauchy problem of integrable dispersive equations with vanishing initial data was pioneered by Ablowitz and Segur \cite{KdV1977} and Zakharov and Manakov \cite{Zakharov1976} through their seminal contributions to the inverse scattering transform. The nonlinear steepest descent method, introduced by Deift and Zhou \cite{DZ1993}, established a rigorous analytical framework for oscillatory Riemann-Hilbert problems, thereby enabling precise characterization of long-time asymptotics of the solution of the defocusing mKdV equation with decaying initial data. These foundational studies have established crucial theoretical frameworks for subsequent mathematical analysis of step-like initial value problems. 
\par
The seminal progress in long-time asymptotic analysis for the Cauchy problem with step-like initial data was first attained in the context of the KdV equation. This development stemmed from the foundational work of Gurevich and Pitaevskii \cite{KdV1973}, who, employing Whitham modulation theory, theoretically predicted the emergence of highly oscillatory structures now termed as dispersive shock waves. The rigorous mathematical justification of this phenomenon was subsequently provided by Khruslov \cite{Khruslov1976} via the inverse scattering method, with later extensions encompassing a broad class of integrable systems \cite{Kotlyarov1989}. 
\par
The rigorous asymptotic analysis within the framework of the Riemann-Hilbert method of step-like initial value problems for integrable systems was pioneered in the seminal works \cite{Buckingham2007,CMP2009}.  Since then, the long-time
asymptotics of integrable dispersive equations with step-like initial conditions
has been studied for KdV in \cite{JDE2016,KdV2013}, for the NLS equation in \cite{Biondini2021, IMRN2011, Study2021, Lenells2021, Jenkins2015, Jenkins2016}, and for the Camassa–Holm equation in \cite{Minakov2016}. For the mKdV equation, the analysis of the focusing version was initiated in the work \cite{Kotlyarov1989}, and later in \cite{Minakov2010, Minakov2012, Minakov2015, Minakov2019, Minakov2020} via the asymptotic analysis of the Riemann-Hilbert problems.  
\par
It is worth mentioning that Grava and Minakov \cite{Minakov2020} proposed a conformal transformation method to reduce a hyperelliptic (genus-two) model Riemann-Hilbert problem, characterized by the symmetry
\begin{equation}
	M(z)=\overline{M(-\bar{z})}=\sigma_2 M(-z)\sigma_2=\sigma_2\overline{M(\bar{z})}\sigma_2, 
\end{equation}
to an ellptic (genus-one) model problem. This reduction allowed for the derivation of the unique solution to the hyperelliptic problem, which is used to reconstruct the leading-order Jacobi elliptic wave solution to the focusing mKdV equation \cite{Minakov2020}. In Section \ref{section5} of this work, we generalize this approach to handle the sharp step initial problem of the  defocusing mKdV equation (\ref{mkdv}) which associates with a hyperelliptic (also genus-two) model Riemann-Hilbert problem with the symmetry
\begin{equation}
	M(z)=\overline{M(-\bar{z})}=\sigma_1 M(-z)\sigma_1=\sigma_1\overline{M(\bar{z})}\sigma_1, 
\end{equation} 
where $\sigma_1$ and $\sigma_2$ are Pauli matrices given in (\ref{Pauli-matrices}) below.
\par

\subsection{Main results and numerical comparisons}\
\par
Numerical simulations indicate that the solution to initial problem of the defocusing mKdV equation (\ref{mkdv})-(\ref{initial})  with $q_r>q_l>0$ develops characteristic spatiotemporal patterns. As illustrated in Figure \ref{3D}, for parameters $q_l=0.2, q_r=0.8$, spatial profiles of the solution exhibit a tripartite structure characterized by two constant-wave regions bordering a central oscillatory zone. 
\par

\begin{figure}[htbp]
	\centering
	\subfigure[Top view of the contour for $q(x,t)$ ]{
		\begin{minipage}[b]{0.4\textwidth}  
			\centering
			\includegraphics[width=\linewidth]{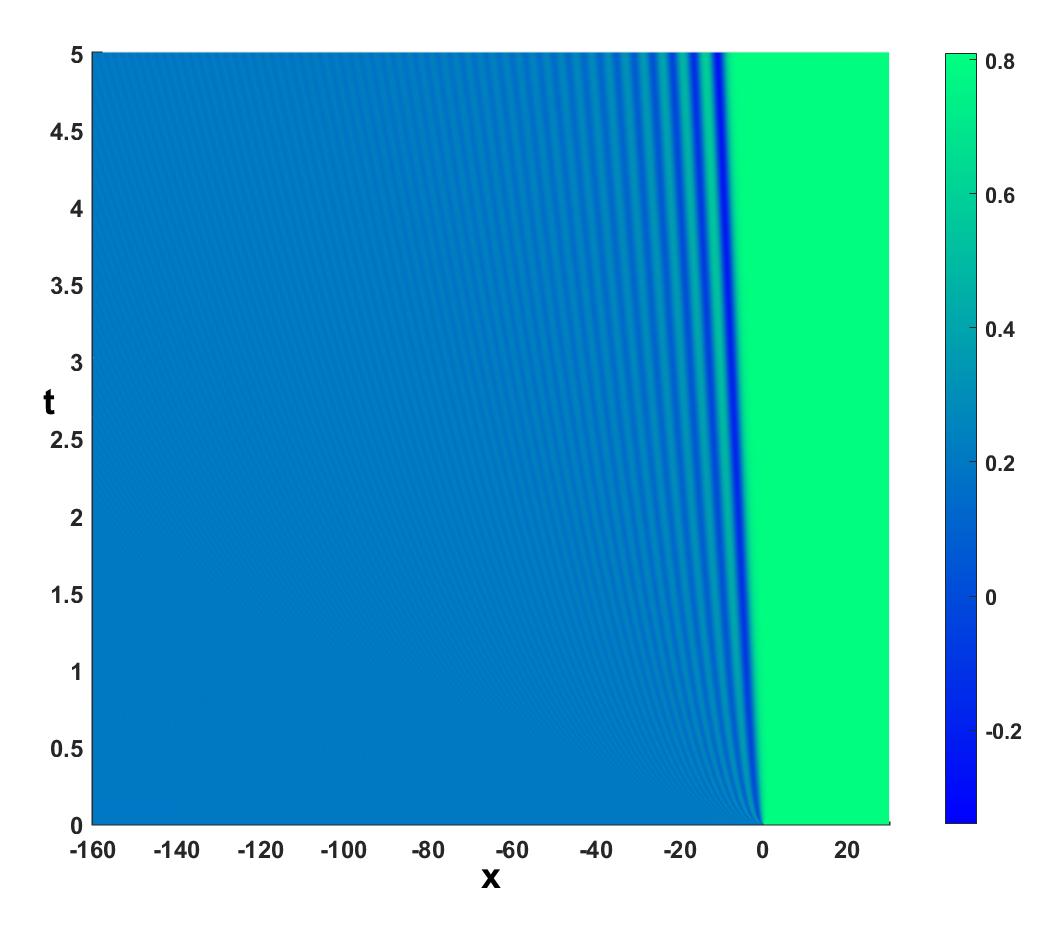}
		\end{minipage}
	}
	\hfill  
	\subfigure[Three-dimensional spatiotemporal structure of $q(x,t)$]{
		\begin{minipage}[b]{0.4\textwidth}  
			\centering
			\includegraphics[width=\linewidth]{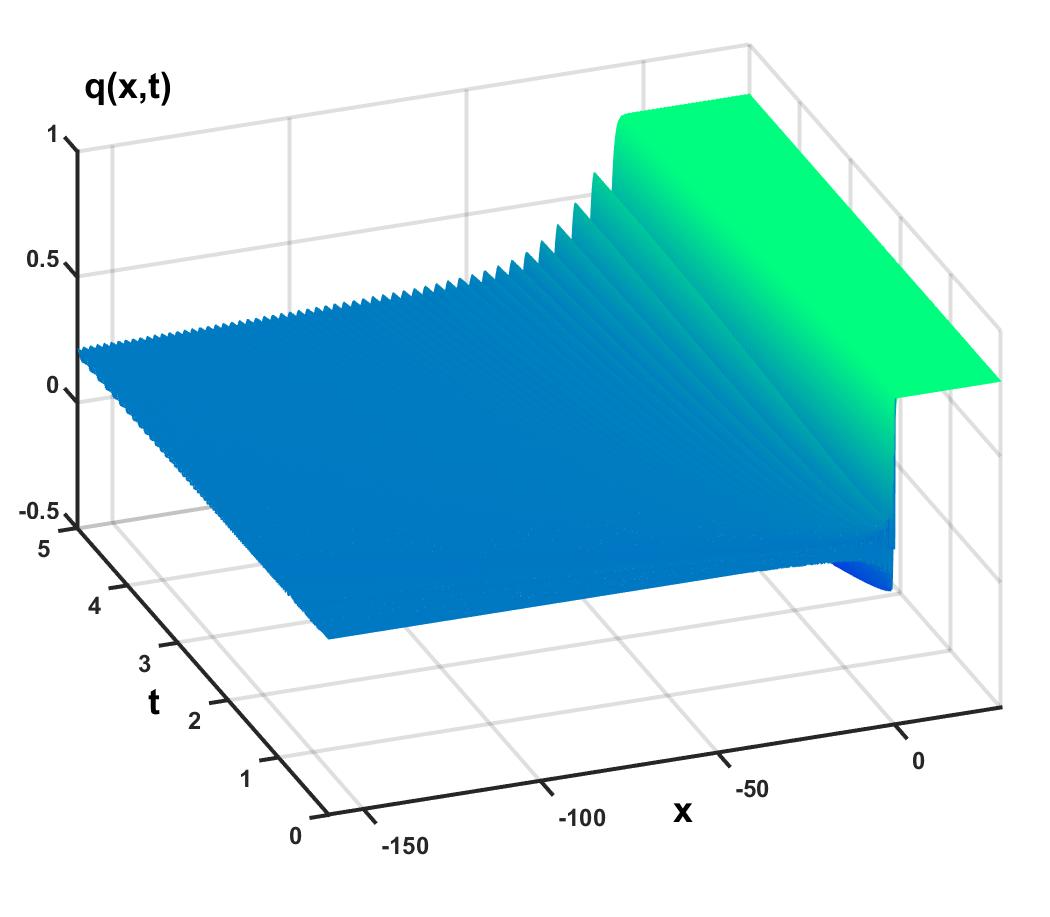}  
		\end{minipage}
	}
	\caption{The evolution of $q(x,t)$ on the upper $(x,t)$-plane for $q_l=0.2$, $q_r=0.8$ and $t\in[0,15]$}  
	\label{3D}  
\end{figure}

To provide the rigorous mathematical characterization of these numerical observations, Theorem 1.1 presents the long-time asymptotics of the solution to the defousing mKdV equation (\ref{mkdv}) with initial data (\ref{initial}). This characterization, derived through rigorous asymptotic analysis within the framework of Riemann-Hilbert formulation, demonstrates that the observed three-region configuration constitutes an intrinsic structural feature of the solution, with the exception of two sufficiently narrow transition regions as specified in Remark \ref{transition}, see also Figure \ref{region}. 
\par

\begin{thm}\label{main} 
	Given two positive constants $M>\varepsilon>0$, the solution $q(x,t)$ to the defocusing mKdV equation (\ref{mkdv}) with initial data (\ref{initial}) with $q_r\ge q_l\ge0$ admits three distinct asymptotic behaviors, classified by the spatiotemporal region in the upper $(x,t)$ plane as shown in Figure \ref{region}.   
    \par
    {\em (1)} In the left-plane wave {\em (LPW)} region, i.e., Region  {\em \RNum{1}} for $6q_l^2t-12q_r^2t-M<x<6q_l^2t-12q_r^2t-\varepsilon$, the asymptotic solution behaves like
	\begin{equation}\label{left-plane}
		q(x,t)=q_l+t^{-\frac{1}{2}} q_{pc}(x,t)+\mathcal{O}(t^{-1}\ln t),
	\end{equation}
    where $q_{pc}(x,t)$ is expressed by
	\begin{equation}
		\begin{aligned}
			q_{pc}(x,t)=\sqrt{\frac{\nu(\xi)\sqrt{-\xi-\frac{q_l^2}{2}}}{3(-\xi+\frac{q_l^2}{2})}}\mathrm{cos}\left[16t\left(-\xi-\frac{q_l^2}{2}\right)^{\frac{3}{2}}-\nu(\xi)\ln \left(\frac{192t\left(-\xi+\frac{q_l^2}{2}\right)^2}{\sqrt{-\xi-\frac{q_l^2}{2}}}\right)+\phi(\xi)\right],
		\end{aligned}
	\end{equation}
	with
	\begin{equation}
		\begin{aligned}
			&\xi =\frac{x}{12t},\quad \nu(\xi)=-\frac{1}{2\pi}\ln(1-|r(z_l)|^2),\quad z_l(\xi)=\sqrt{\frac{q_l^2}{2}-\xi},\\
			&\phi(\xi):=\frac{\pi}{4}-\arg r(z_l)-\arg \Gamma(-i\nu(\xi))+\arg \chi^2(z_l), \\
		\end{aligned}
	\end{equation} 
	where $\chi(z_l)=\lim_{z\to z_l}\chi(z)$, and $r(z)$ and $\chi(z)$ are given by (\ref{a}) and (\ref{chi}), respectively. 
    \par
	 {\em (2)} In the middle dispersive shock wave {\em (DSW)} region, i.e., Region {\em\RNum{2}} for $6q_l^2t-12q_r^2t+\varepsilon<x<-4q_l^2t-2q_r^2t-\varepsilon$, the asymptotic solution is characterized by a modulated elliptic wave of the form
	\begin{equation}\label{shock}
		q(x,t)=q_{dsw}(x,t)+\mathcal{O}(t^{-1}), 
	\end{equation}
	where $q_{dsw}(x,t)$ is expressed by Jacobi elliptic functions 
    \begin{equation}\label{qdsw}
		q_{dsw}(x,t)=q_l+z_d-q_r
		+\frac{2(q_r-z_d)(q_r-q_l)\mathrm{cn}^2\left(\sqrt{q_r^2-q_l^2}((x-x_0)+\mathcal{V}t)|m\right)}{(q_r-q_l)-(q_r-z_d)\mathrm{sn}^2\left(\sqrt{q_r^2-q_l^2}((x-x_0)+\mathcal{V}t)|m\right)},
	\end{equation}
	with
	\begin{equation}
		\begin{aligned}
			x_0= \int_{q_l}^{z_d}\frac{\ln(a_+(\zeta)a_-^*(\zeta))\zeta }{\mathcal{R}_+(\zeta)}d\zeta,\quad\mathcal{V}=q_l^2+q_r^2+z_d^2,\quad m^2=\frac{q_r^2-z_d^2}{q_r^2-q_l^2}, 
		\end{aligned}
	\end{equation}
	where $z_d$ is determined by the equation (\ref{zd}), and $a(z)$ and $a^*(z)$ are defined in (\ref{a}).
    \par
	{\em (3)} In the right-plane wave {\em (RPW)} region, i.e., Region {\em \RNum{3}} for $x>-4q_l^2t-2q_r^2t+\varepsilon$, the solution is asymptotically described by
	\begin{equation}
		q(x,t)=q_r+\mathcal{O}(e^{-ct}),
	\end{equation}
	where $c>0$ is a positive real constant. 
\end{thm}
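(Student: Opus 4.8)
The plan is to establish all three asymptotic regimes through a sequence of explicit transformations of the Riemann-Hilbert problem associated with (\ref{mkdv})--(\ref{initial}), following the Deift--Zhou nonlinear steepest descent paradigm. First I would set up the direct and inverse scattering transform for the Lax pair of the defocusing mKdV equation, encoding the step initial data (\ref{initial}) through the scattering coefficients $a(z)$, $a^*(z)$ and the reflection coefficient $r(z)$ referenced in the statement. The resulting RHP carries the oscillatory phase $e^{\pm 2it\theta(z;\xi)}$, whose controlling parameter is the self-similar variable $\xi = x/(12t)$. The sign structure of $\mathrm{Re}(i\theta)$ as a function of $\xi$ partitions the upper $(x,t)$-plane into the three regions of the theorem, and the location of the branch/stationary points of $\theta$ dictates which model problem governs each region. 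Establishing these global sign charts is the indispensable preliminary step.

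In the right-plane wave region (Region \RNum{3}), $x > -4q_l^2 t - 2q_r^2 t + \varepsilon$, the phase $\theta$ has no real stationary points and $\mathrm{Re}(i\theta)$ possesses a definite sign on the jump contour. Here I would deform the contour off the real axis so that the jump matrix decays exponentially to the identity away from a neighborhood of the relevant branch point; the residual RHP is solvable by a constant plus an exponentially small correction, yielding $q = q_r + \mathcal{O}(e^{-ct})$. For the left-plane wave region (Region \RNum{1}), the phase develops a pair of real stationary points near $z_l(\xi) = \sqrt{q_l^2/2 - \xi}$. After introducing the scalar function $\chi(z)$ of (\ref{chi}) to regularize the jump and opening lenses, the local behavior is captured by a parabolic-cylinder parametrix; matching this to the global solution produces the $t^{-1/2}$ modulated plane wave $q_{pc}$, with the phase shift $\phi(\xi)$ and decay exponent $\nu(\xi)$ arising from the argument of the $\Gamma$-function and the value $\chi(z_l)$, exactly as in the classical decaying-data analysis.

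The central dispersive shock wave region (Region \RNum{2}) is the crux of the argument and the principal obstacle. Here $\mathrm{Re}(i\theta)$ no longer admits a scalar $g$-function that opens all lenses favorably, so I would construct an odd-symmetry $g$-function adapted to the symmetry $M(z)=\sigma_1 M(-z)\sigma_1$, introducing a band on which $g_+ + g_-$ is constant and a moving branch point $z_d$ fixed by (\ref{zd}). This converts the RHP into one whose model problem lives on a genus-two hyperelliptic Riemann surface. The essential difficulty is solving this model problem explicitly: following the generalization of Grava--Minakov \cite{Minakov2020}, I would exploit the four-fold symmetry to perform a conformal transformation mapping the genus-two surface onto a genus-one surface, thereby reducing the associated Riemann-theta functions and yielding a closed form in Jacobi elliptic functions. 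Reconstructing $q$ then gives the modulated elliptic wave $q_{dsw}$ of (\ref{qdsw}), with speed $\mathcal{V}=q_l^2+q_r^2+z_d^2$, modulus $m$, and offset $x_0$ emerging from the theta-function arguments and the Cauchy integral of $\ln(a_+a_-^*)$.

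In all three regions the final step is a small-norm argument: the error matrix solving the RHP with jump close to the identity is controlled by the $L^2\cap L^\infty$ norm of the jump minus identity, which is $\mathcal{O}(t^{-1})$ in Regions \RNum{2} and \RNum{3} and $\mathcal{O}(t^{-1}\ln t)$ in Region \RNum{1} (the logarithm coming from the parabolic-cylinder matching), and $q$ is recovered from the $1/z$ expansion of the normalized solution at infinity. I anticipate that verifying the sign charts of $\mathrm{Re}(i\theta)$ globally and, above all, carrying out the genus reduction together with the explicit theta-to-elliptic computation in Region \RNum{2} will demand the most care; the transition zones near the $\xi$-thresholds (Remark \ref{transition}) are excluded precisely because the parametrices degenerate there.
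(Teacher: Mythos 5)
Your proposal follows essentially the same route as the paper: inverse scattering for the sharp step, sign charts of the phase in the self-similar variable $\xi=x/(12t)$, a genus-zero analysis with parabolic-cylinder local models at $\pm z_l$ in Region \RNum{1}, an odd-symmetric genus-two $g$-function with the soft edge $z_d$ fixed by (\ref{zd}) in Region \RNum{2}, the Grava--Minakov conformal genus reduction to a genus-one theta-function model, and small-norm error control throughout; the error orders you give ($t^{-1/2}$ leading term, $\mathcal{O}(t^{-1}\ln t)$, $\mathcal{O}(t^{-1})$, $\mathcal{O}(e^{-ct})$) all match the paper's.

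However, one step of your Region \RNum{2} argument would fail as written: you assert that, after subtracting the theta-function outer model, the jump-minus-identity is $\mathcal{O}(t^{-1})$ in $L^2\cap L^\infty$, but the convergence of the deformed jump to the model jump is non-uniform near the moving band edges $\pm z_d$, where $g_d$ vanishes to order $3/2$; without local parametrices there the small-norm machinery does not close. The paper supplies Airy models: the conformal map $\frac{2}{3}\kappa^{3/2}(z)=-i\,\mathrm{sgn}(\Im z)\left[g_d(z)-g_d(z_d)\right]$ together with the scalar $h(z)$ reduces the jumps in disks around $\pm z_d$ to the Airy RHP of Appendix \ref{Appendix-B}, and the $\mathcal{O}(t^{-1})$ rate is exactly the $\mathcal{O}(\zeta_d^{-3/2})$ mismatch of the Airy parametrix on the disk boundaries --- you include the analogous parabolic-cylinder matching in Region \RNum{1} but omit its Airy counterpart here. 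Two smaller glosses: (i) in Regions \RNum{1} and \RNum{3} the bare phase $\theta$ cannot be used directly because of the nonvanishing backgrounds; one needs the genus-zero $g$-functions $g_l$, $g_r$ cut on $[-q_l,q_l]$ and $[-q_r,q_r]$ (your $z_l=\sqrt{q_l^2/2-\xi}$ is the stationary point of $g_l$, not of $\theta$, so you are implicitly assuming this), and Region \RNum{3} in fact splits into two sub-regimes according to whether $\pm z_r$ are real or purely imaginary; (ii) after the genus reduction the model solution acquires a spurious pole at $z=0$ from the auxiliary scalar $\mathcal{D}_u$, which must be removed by the rational left factor in (\ref{N3}), and the resulting constant $\sigma$ enters the reconstruction formula (\ref{redsw}) --- the identification of $2i\lim_{z\to\infty}z\mathcal{N}_{12}+2i\sigma$ with $q_{dsw}$ in (\ref{qdsw}) is the nontrivial theta-to-elliptic computation of Appendix \ref{Appendix-C}, which your final step should not treat as immediate.
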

\par
In fact, if $z_d$ is a constant, the leading-order term $q_{dsw}(x,t)$ in (\ref{qdsw}) in Region {\RNum{2}} is the exact traveling wave periodic solution of the defocusing mKdV equation (\ref{mkdv}) in Appendix \ref{Appendix-A}. However, since the initial data (\ref{initial}) considered in this work is a sharp step function, $z_d$ has to be a movable Riemann invariant. So the asymptotic solution (\ref{shock}) behaves as a dispersive shock wave which is expressed by a modulated Jacobi elliptic functions. 
\par
To further verify the correctness of the asymptotic formulas in Theorem \ref{main}, it is necessary to compare the curves of the leading-order terms in Theorem \ref{main} with the corresponding numerical solutions for sufficiently large $t$ with special parameters $q_r$ and $q_l$. As shown in Figure \ref{compare}, perfect agreements are observed for two groups of parameters: (a) $q_l=0$, $q_r=0.5$;  (b) $q_l=0.2$, $q_r=0.8$. 
\par
\begin{figure}[ht]
	\centering
	\begin{tikzpicture}[scale=1.3]
		\draw[white, fill=blue!60] (-4.8,4.6) -- (-4.8,0.2) -- (0,0) -- cycle;
		\draw[white, fill=orange!60] (-4.6,4.8) -- (-2.1,4.8) -- (0,0) -- cycle;
		\draw[white, fill=blue!20!] (-1.9,4.8) -- (4.8,4.8) -- (4.8,0) -- (0,0) -- cycle;
		\draw[line width=1pt] (0,0) -- (-4.8,4.8) node[pos=1,above,font=\small]{$\xi=\frac{q_l^2}{2}-q_r^2$};
		\draw[line width=1pt] (0,0) -- (-2,4.8) node[pos=1,above,font=\small]{$\xi=-\frac{q_l^2}{3}-\frac{q_r^2}{6}$};
		\node[font=\small, text width=4cm, align=center] at (2.4,2.4) {\textbf{Region~\RNum{3} \\ Right-plane wave}};
		\node[font=\small, text width=4cm, align=center] at (-3.5,1.5) {\textbf{Region~\RNum{1}\\Left-plane wave}};
		\node[font=\small, text width=4cm, align=center] at (-2.6,4) {\textbf{Region~\RNum{2}\\Dispersive\\shock-wave}};
		\draw[gray,dashed,line width=1pt] (0,0) -- (-0.5,4.8) node[pos=1,above,font=\small]{$\xi=-\frac{q_r^2}{6}$};
		\draw[line width=1pt,-{Stealth[length=1.5mm, width=1.5mm]}] (-5,0) -- (5,0) node[pos=0.5,below]{$O$} node[pos=1,below]{$x$};
		\draw[line width=1pt,-{Stealth[length=1.5mm, width=1.5mm]}] (0,0) -- (0,5) node[pos=1,above]{$t$};
	\end{tikzpicture}
	\caption{The region distributions of asymptotic solution on the upper $(x,t)$-half-plane.}
	\label{region}
\end{figure}
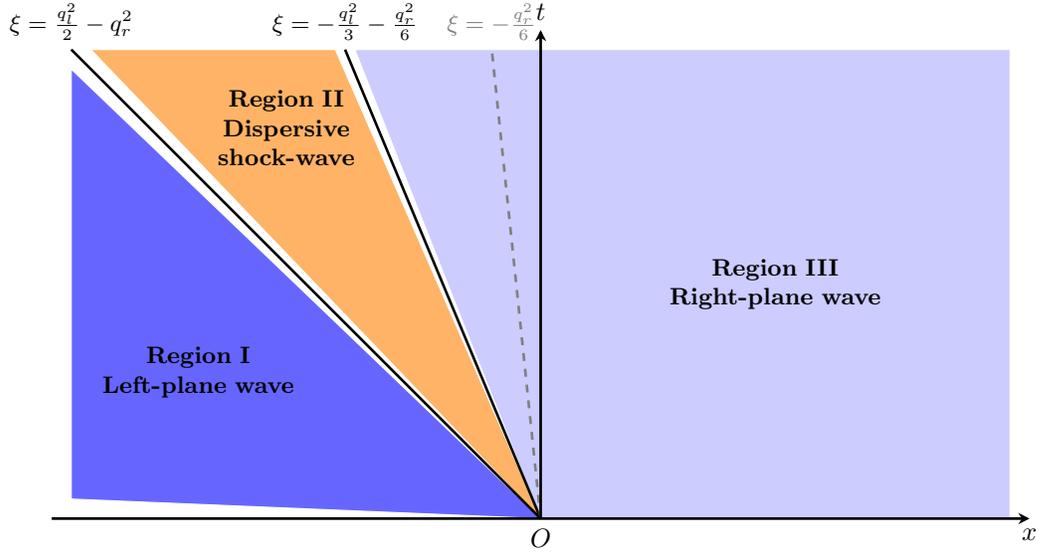
\par
\begin{figure}[htbp]
	\centering
	\subfigure[$q_l=0$, $q_r=0.5$]{
		\begin{minipage}[b]{0.45\textwidth}  
			\centering
			\includegraphics[width=\linewidth]{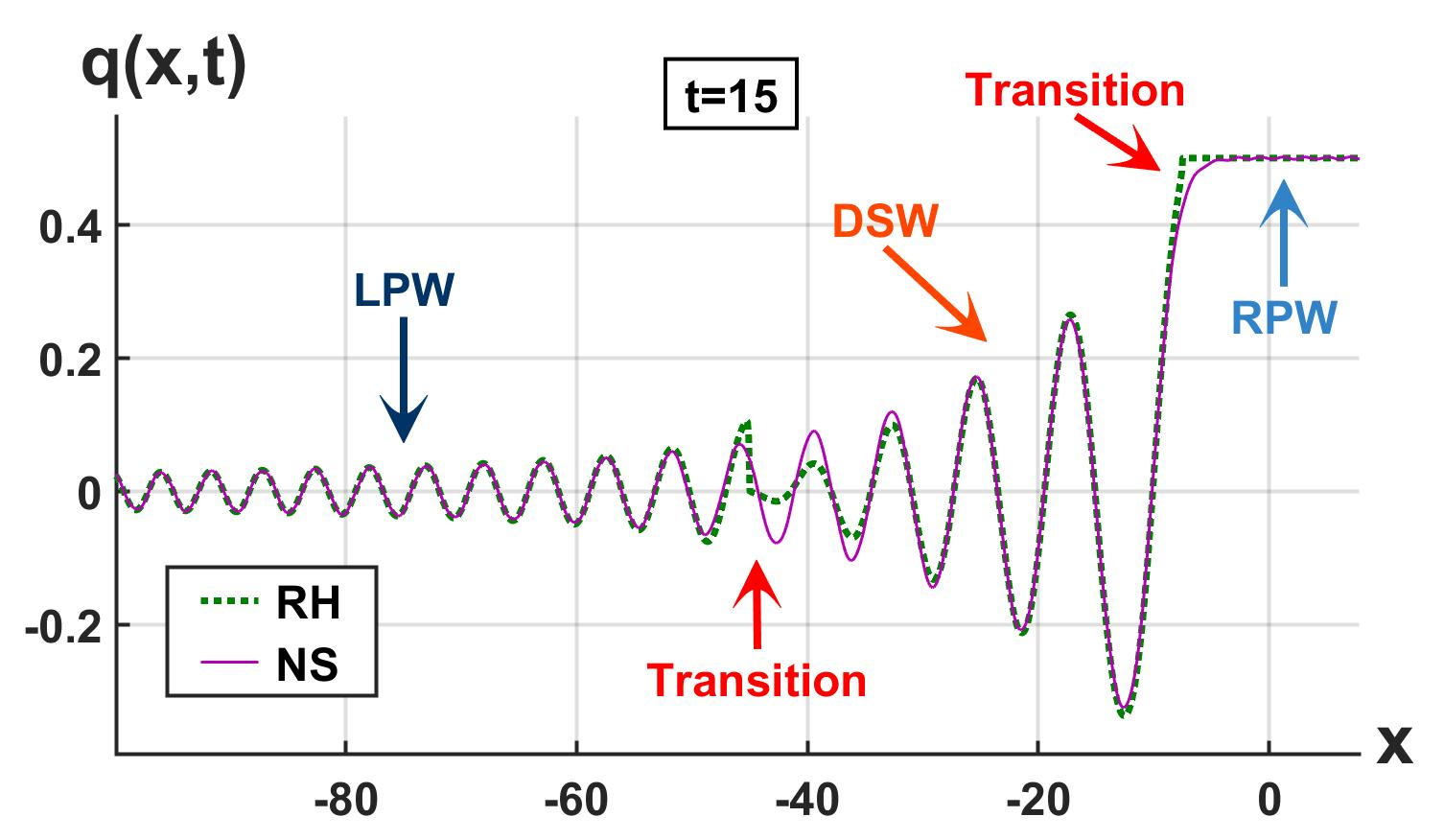}  
		\end{minipage}
	}
	\hfill  
	\subfigure[$q_l=0.2$, $q_r=0.8$]{
		\begin{minipage}[b]{0.45\textwidth}  
			\centering
			\includegraphics[width=\linewidth]{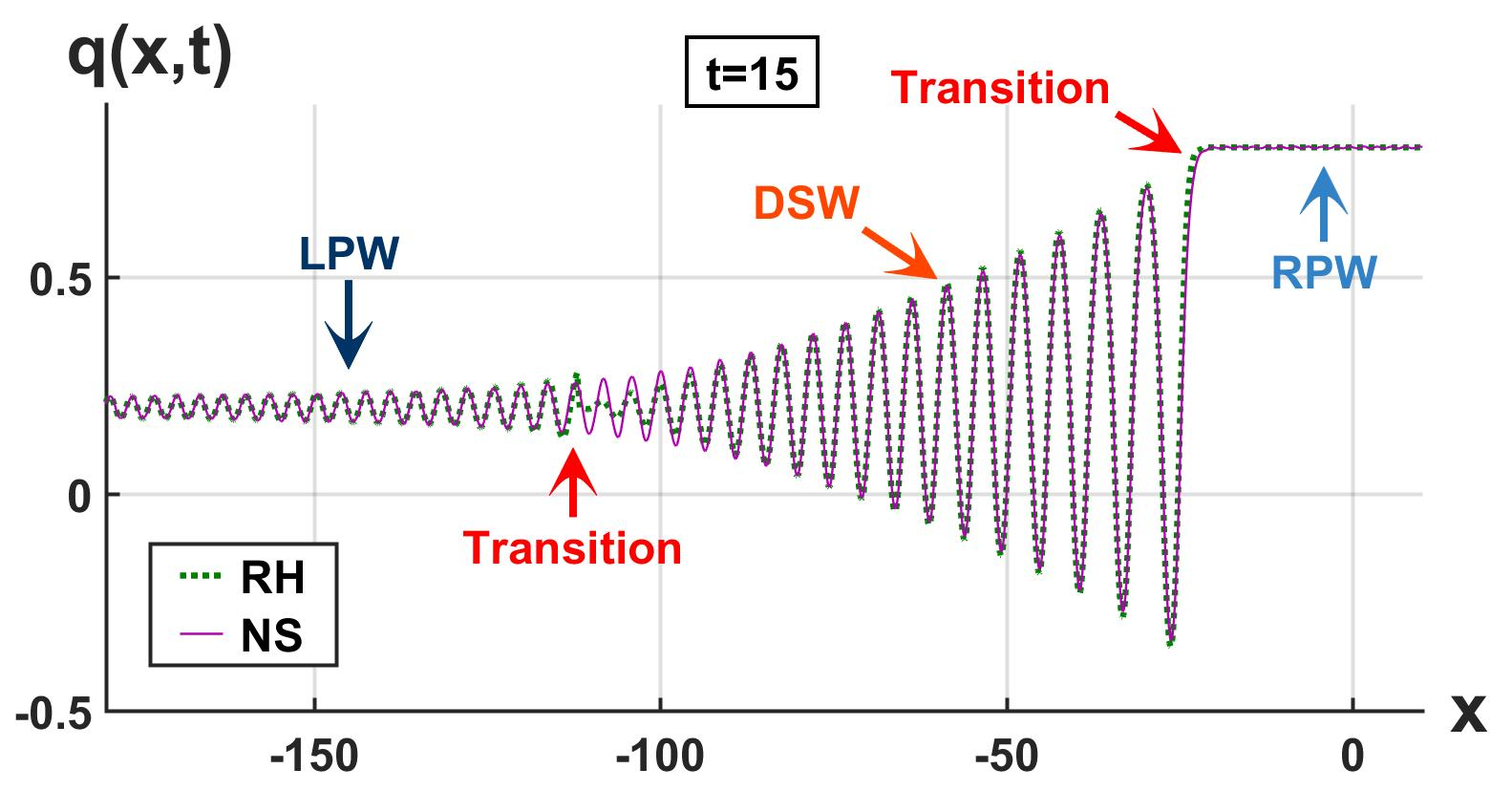}
		\end{minipage}
	}
	\caption{Comparisons of solution to the initial-value problem (\ref{mkdv}) with  (\ref{initial}) obtained from Riemann-Hilbert method (RH) and numerical simulations (NS) for different parameters.}  
	\label{compare}  
\end{figure}
\par
\begin{rmk}
	When the parameter $q_l=0$, the interval $[-q_l,q_l]$ collapses to a single point and no longer contributes to the jump condition in Riemann-Hilbert Problem \ref{rhp} below. In this case, the asymptotic solutions (\ref{left-plane}) and (\ref{shock}) in Theorem \ref{main} are simplified into: 
	\begin{equation}\label{Ann-1993}
		q(x,t)=\sqrt{\frac{\nu(\xi)}{3t(-\xi)^{1/2}}}\mathrm{cos}\left[16t\left(-\xi\right)^{\frac{3}{2}}-\nu(\xi)\ln \left(192t\left(-\xi\right)^{3/2}\right)+\phi(\xi)\right]+\mathcal{O}(t^{-1}\ln t),
	\end{equation}
	and 
	\begin{equation}\label{littledsw}
		q(x,t)=q_r-z_d+\frac{2q_r(q_r-z_d)\mathrm{cn}^2\left(q_r(x-x_0+(q_r^2+z_d^2)t)|m\right)}{q_r-(q_r-z_d)\mathrm{sn}^2\left(q_r(x-x_0+(q_r^2+z_d^2)t)|m\right)}+\mathcal{O}(t^{-1}), 
	\end{equation}
	respectively, where 
	\begin{equation}
		z_l=(-\xi)^{1/2},\quad x_0=\int_{0}^{z_d}\frac{\ln\left(i\frac{\beta_r(z)-\beta_r(z)^{-1}}{4}\right)}{\sqrt{(\zeta^2-z_d^2)(\zeta^2-q_r^2)}},\quad m^2=\frac{q_r^2-z_d^2}{q_r^2}.
	\end{equation}
	In fact, the asymptotic expression (\ref{Ann-1993}) has been given by Deift and Zhou early in 1993 \cite{DZ1993} for the defocusing mKdV equation (\ref{mkdv}) with rapidly decaying initial value. The leading-order term in (\ref{littledsw}) can also be derived directly from the auxiliary Riemann-Hilbert Problem \ref{rhp_w} by taking parameters $\tilde{q}=q_r$, $\tilde{d}=z_d$ and $q_l=0$. The profile of $q(x,t)$ for $t=15$ in this case is provided in Figure \ref{compare}(a).
\end{rmk}
\par
\begin{rmk}\label{transition}
	The white areas in Figure \ref{region}, corresponding to $6q_l^2t-12q_r^2t-\varepsilon<x<6q_l^2t-12q_r^2t+\varepsilon$ and $-4q_l^2t-2q_r^2t-\varepsilon<x<-4q_l^2t-2q_r^2t+\varepsilon$, represent the transition regions of the asymptotic solution \cite{Lenells2025}. Geometrically, taking the limit $\varepsilon\to 0$ would collapse these regions into rays, but this compression would adversely affect the boundary behaviors in adjacent regions, as clearly illustrated in Figure \ref{compare}. In fact, the complete asymptotic analysis for these transition regions proves to be prohibitively tedious (see, e.g., Ref. \cite{Minakov2019} for the case of focusing mKdV equation with step-like initial data), and thus we place it beyond the scope of the present study. 
\end{rmk}
\par
\subsection{Paper organization and notations}\
\par
\hspace{1em} In Section \ref{section2}, the inverse scattering transform is employed to reconstruct the solution to the defocusing mKdV equation (\ref{mkdv}) with initial data (\ref{initial}) by formulating an appropriate Riemann-Hilbert problem (RHP). In Section \ref{section3}, we construct the ``g"-functions required for the analysis of the RHP \ref{rhp} and demonstrate that their dynamics is governed by the Whitham equations \cite{El-SIAM-2017,AMK-2004} of the defocusing mKdV equation (\ref{mkdv}). Finally, in Sections \ref{section4}-\ref{section6}, the Deift-Zhou nonlinear steepest descent method is applied to derive the long-time asymptotics of the solution of the Riemann-Hilbert Problem \ref{rhp}, which establishes the results in Theorem \ref{main}. 
\par
We begin by establishing our notational conventions. The Pauli matrices, which play a fundamental role in asymptotic analysis, are defined as 
\begin{equation}\label{Pauli-matrices}
	\sigma_1 = \begin{pmatrix}0&1\\1&0\end{pmatrix}, \quad
	\sigma_2 = \begin{pmatrix}0&-i\\i&0\end{pmatrix}, \quad
	\sigma_3 = \begin{pmatrix}1&0\\0&-1\end{pmatrix}.
\end{equation}
These matrices facilitate the following matrix operations: 
\begin{equation}
	f^{\sigma_3} = \begin{pmatrix}f&0\\0&f^{-1}\end{pmatrix}, \quad
	f^{\widehat{\sigma}_3}A = f^{\sigma_3}Af^{-\sigma_3}, 
\end{equation}
where $f$ denotes an arbitrary scalar function and $A$ represents a 
$2\times 2$ matrix. 
\par
For a complex variable $z$, $\Im z$ and $\bar{z}$ signify its imaginary part and complex conjugation. For a complex-valued function $f(z)$, define the Schwartz conjugation as $f^*(z):= \overline{f(\bar{z})}$. Given a piecewise smooth oriented contour $\Gamma \subset \mathbb{C}$ and a scalar or matrix function $f(z)$ holomorphic in $\mathbb{C}\backslash\Gamma$, $f_\pm(z)$ denote the non-tangential boundary values from the left (+) and right (-) sides of $\Gamma$ when they exist. Finally, for any vector $\bm{z} = (z_1, z_2, \cdots, z_G) \in (\mathbb{R}_{\geq 0})^G$, introduce the function
\begin{equation}
	\mathcal{R}(z;\bm{z}) = \left(\prod_{i=1}^G(z+z_i)(z-z_i)\right)^{1/2},
\end{equation}
defined by finite branch cuts along the real axis and satisfied the asymptotics $\mathcal{R}(z;\bm{z}) \sim z^G$ for $z\to \infty$.

\section{Preliminaries}\label{section2}

This section builds the Riemann-Hilbert problem and reconstruction formula associated with the Cauchy problem (\ref{mkdv}) and (\ref{initial}).

\subsection{The Jost solutions of Lax pair} \

As a member of AKNS hierarchy \cite{AKNS_1973}, the defocusing mKdV equation (\ref{mkdv}) admits a Lax pair (i.e., Zakharov-Shabat spectrum problem) in the form of an overdetermined system of linear ordinary equations
\begin{equation}\label{lax}
	\begin{cases}
		\Phi_x(x,t;z)+iz\sigma_3\Phi(x,t;z)=Q(x,t)\Phi(x,t;z),\\
		\Phi_t(x,t;z)+4iz^3\sigma_3\Phi(x,t;z)=\tilde{Q}(x,t;z)\Phi(x,t;z),
	\end{cases}
\end{equation}
for a $2\times2$ matrix-valued function $\Phi(x,t;z)$, where $z\in\mathbb{C}$ is the spectral variable and
\begin{equation}\label{Q}
	Q(x,t)=q(x,t)\cdot \sigma_1,\quad
	\tilde{Q}(x,t;z)=4z^2Q+2iz\sigma_3(Q_x-Q^2)+2Q^3-Q_{xx}.
\end{equation}
That is, the Lax pair (\ref{lax}) satisfies the compatibility condition $\Phi_{xt}=\Phi_{tx}$ if and only if the potential function $q(x,t)$ solves the defocusing mKdV equation (\ref{mkdv}).  
\par
To apply the inverse scattering transform to the Cauchy problem (\ref{mkdv}) and (\ref{initial}), define two matrix-valued Jost solutions of the Lax pair (\ref{lax}), which satisfy the asymptotic conditions
\begin{equation}	\Phi_l(x,t;z)=\Phi_l^p(x,t;z)+o(1), \quad x\to-\infty, \quad \Im z=0, 
\end{equation}
\begin{equation}
\Phi_r(x,t;z)=\Phi_r^p(x,t;z)+o(1),\quad x\to+\infty,\quad \Im z=0, 
\end{equation}
where $\Phi_j^p(x,t;z)$ $(j=l, r)$ represent the solutions to the coupled systems 
\begin{equation}
	\begin{cases}
		\Phi_{j,x}^{p}(x,t;z)+iz\sigma_3\Phi_j^{p}(x,t;z)=Q_j^{p}\Phi_j^{p}(x,t;z),\\
		\Phi_{j,t}^{p}(x,t;z)+4iz^3\sigma_3\Phi_j^{p}(x,t;z)=\tilde{Q}_j^{p}(z)\Phi_j^{p}(x,t;z),
	\end{cases}
\end{equation}
where $j=l, r$ and
\begin{equation}
	Q_j^{p}=q_j \cdot \sigma_1,\quad \tilde{Q}_j^{p}(z)=4z^2Q^{p}-2iz\sigma_3(Q^{p})^2+2(Q^{p})^3. 
\end{equation} 
Explicitly, for $j=l,r$, it follows that
\begin{equation}
\Phi_j^p(x,t;z)=\mathcal{E}_j(z)e^{-i(\mathcal{R}_j(z)x+\Omega_j(z)t)\sigma_3},
\end{equation}
where 
\begin{equation}
	\mathcal{R}_j(z):=\mathcal{R}(z;q_j)=(z^2-q_j^2)^{1/2}, \quad \Omega_j(z)=2(2z^2+q_j^2)\mathcal{R}_j(z),
\end{equation}
and 
\begin{equation}\label{epsilon}
	\mathcal{E}_j(z)=\frac{1}{2}\begin{pmatrix}
		\beta_j(z)+\frac{1}{ \beta_j(z)}&i( \beta_j(z)-\frac{1}{ \beta_j(z)})\\-i( \beta_j(z)-\frac{1}{ \beta_j(z)})& \beta_j(z)+\frac{1}{ \beta_j(z)}
	\end{pmatrix}, 
\end{equation}
with 
\begin{equation}
	\beta_j(z):\mathbb{C}\backslash[-q_j,q_j]\rightarrow\mathbb{C}, \quad \beta_j(z)=(\frac{z-q_j}{z+q_j})^{1/4}. 
\end{equation}
Furthermore, it is easy to show that as $z\to\infty$, one has 
\begin{equation}
	\mathcal{R}_j(z)=z+\mathcal{O}(z^{-1}), \quad \beta_j(z)=1+\mathcal{O}(z^{-1}). 
\end{equation}
\par
\subsection{Direct scattering of sharp step initial data}\
\par
For the general step-like initial data, one can prove the existence and analytic extension (in $z$) theorems for the Jost functions. However, for the sharp step initial data given by (\ref{initial}), the Jost functions can be written explicitly as 
\begin{equation}\label{jl}
	\Phi_{l}(x;z)=\begin{cases}
		\mathcal{E}_l(z) e^{-i\mathcal{R}_l(z)x\sigma_3}, \quad x<0, \\
		\mathcal{E}_r(z)e^{-i\mathcal{R}_r(z)x\sigma_3}\mathcal{E}_r^{-1}(z)\mathcal{E}_l(z), \quad x>0,
	\end{cases}
\end{equation}

\begin{equation}\label{jr}
	\Phi_r(x;z)=\begin{cases}
		\mathcal{E}_l(z)e^{-i\mathcal{R}_l(z)x\sigma_3}\mathcal{E}_l^{-1}(z)\mathcal{E}_r(z), \quad x<0, \\
		\mathcal{E}_r(z)e^{-i\mathcal{R}_r(z)x\sigma_3}, \quad x>0.
	\end{cases}
\end{equation}
In addition, it is easy to show that: 

\begin{lem}
	For $j\in \{l,r\}$, the Jost functions defined by (\ref{jl}) and (\ref{jr}), with their columns expressed by $\Phi_j^{(k)}~(k=1,2)$, have the following properties: \\
	(a) $\det\text{ }\Phi_j(x;z)=1$. \\
	(b) $\Phi_j(x;z)$ is analytic for $z\in \mathbb{C} \backslash [-q_j,q_j]$. \\
	(c) $\Phi_j(x;z)e^{i\mathcal{R}_j(z)x\sigma_3}=I+\mathcal{O}(z^{-1})$ as $z\to \infty$. \\ 
	(d) For $z\in (-q_j,q_j)$, $\Phi_j(x;z)$ takes continuous boundary values satisfying
	\begin{equation}
		\Phi_{j-}^{-1}(x;z)\Phi_j(x;z)=\mathcal{E}_{j-}^{-1}(z)\mathcal{E}_{j+}(z)=-i\sigma_2.
	\end{equation} 
	(e) $(z\mp q_j)^{1/4}\Phi_j(x;z)$ are bounded as $z\to\pm q_j$. \\
	(f) Symmetries: \begin{equation}
		\begin{aligned}
			\Phi_j^{(1)}(x;-z)=\overline{\Phi_j^{(1)}(x;\bar{z})}=\sigma_1\Phi_j^{(2)}(x;z), \quad \overline{\Phi_j(x;-\bar{z})}=\Phi_j(x;z). 
		\end{aligned}
	\end{equation}
\end{lem}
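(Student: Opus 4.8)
The plan is to verify the six properties by direct computation, exploiting the fact that for a sharp step the Jost matrices in (\ref{jl}) and (\ref{jr}) are elementary piecewise products of the background solutions, so no existence/estimate machinery is needed. First I would record a few reusable identities for $\mathcal{E}_j(z)$ in (\ref{epsilon}). Writing $a_j=\beta_j+\beta_j^{-1}$ and $b_j=\beta_j-\beta_j^{-1}$, a one-line computation gives $a_j^2-b_j^2=4$, hence $\det\mathcal{E}_j=1$; since $\det e^{-i\mathcal{R}_j x\sigma_3}=1$ and the determinant is multiplicative, property (a) follows at once for both branches. The same bookkeeping yields $\sigma_1\mathcal{E}_j\sigma_1=\mathcal{E}_j^{-1}$, $\mathcal{E}_j(-z)=\mathcal{E}_j^{-1}(z)$ and $\overline{\mathcal{E}_j(\bar z)}=\mathcal{E}_j^{-1}(z)$, together with the on-cut relation $\mathcal{E}_{j-}^{-1}\mathcal{E}_{j+}=-i\sigma_2$ for $z\in(-q_j,q_j)$; the latter I would obtain from the fourth-root jump $\beta_{j+}=i\beta_{j-}$ across $[-q_j,q_j]$, which sends $(a_j,b_j)\mapsto(ib_j,ia_j)$ and collapses the product to $-i\sigma_2=\begin{pmatrix}0&-1\\1&0\end{pmatrix}$.

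The one genuinely non-algebraic input is the behaviour of the cross-background transfer matrix $T_r(x;z):=\mathcal{E}_r(z)e^{-i\mathcal{R}_r(z)x\sigma_3}\mathcal{E}_r^{-1}(z)$ appearing in the $x>0$ branch of $\Phi_l$ (and symmetrically in the $x<0$ branch of $\Phi_r$). Although $\mathcal{E}_r$ and $\mathcal{R}_r$ carry branch points at $\pm q_r$, I would show $T_r$ is in fact \emph{entire} in $z$: it is the value at $x$ of the fundamental solution of $T_x=(Q_r^p-iz\sigma_3)T$, $T(0)=I$, whose coefficient matrix $A(z)=Q_r^p-iz\sigma_3$ is entire and satisfies $A(z)^2=(q_r^2-z^2)I=-\mathcal{R}_r^2 I$. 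Hence $T_r=e^{xA(z)}=\cos(\mathcal{R}_r x)\,I+\mathcal{R}_r^{-1}\sin(\mathcal{R}_r x)\,A(z)$, and since $\cos(\mathcal{R}_r x)$ and $\mathcal{R}_r^{-1}\sin(\mathcal{R}_r x)$ are even entire functions of $\mathcal{R}_r$, i.e. entire in $\mathcal{R}_r^2=z^2-q_r^2$, the matrix $T_r$ is entire, with its apparent $\pm q_r$ cut removed. Consequently the only surviving cut of $\Phi_l$ is the one carried by the trailing factor $\mathcal{E}_l$, namely $[-q_l,q_l]$; together with the trivial check that both branches of (\ref{jl}) match to $\mathcal{E}_l$ at $x=0$ (and of (\ref{jr}) to $\mathcal{E}_r$), this gives analyticity and continuity, property (b). The explicit $T_r$ also makes (e) transparent, since the entries of $\mathcal{E}_j$ blow up at worst like $|z\mp q_j|^{-1/4}$ as $z\to\pm q_j$ while $T_r$ and $e^{-i\mathcal{R}_j x\sigma_3}$ stay bounded there ($\mathcal{R}_j(\pm q_j)=0$), so $(z\mp q_j)^{1/4}\Phi_j$ is bounded. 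For (d), in the matching branch I would write $\Phi_{j-}^{-1}\Phi_{j+}=e^{i\mathcal{R}_{j-}x\sigma_3}(-i\sigma_2)e^{-i\mathcal{R}_{j+}x\sigma_3}$ and use $\mathcal{R}_{j+}=-\mathcal{R}_{j-}$ on the cut together with the fact that an off-diagonal matrix is invariant under conjugation by $e^{\alpha\sigma_3}$ on both sides with equal $\alpha$, giving $-i\sigma_2$; in the far branch the jump comes only from $\mathcal{E}_j$ because $T_r$ is analytic across $(-q_l,q_l)$, yielding the same value.

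For the normalization (c) I would use $\beta_j=1+\mathcal{O}(z^{-1})$, hence $\mathcal{E}_j=I+\mathcal{O}(z^{-1})$, directly in the matching branch where $\Phi_j e^{i\mathcal{R}_j x\sigma_3}=\mathcal{E}_j$; in the complementary branch I would combine $T_r=e^{-i\mathcal{R}_r x\sigma_3}+\mathcal{O}(z^{-1})$, $\mathcal{E}_r^{-1}\mathcal{E}_l=I+\mathcal{O}(z^{-1})$ and $\mathcal{R}_l-\mathcal{R}_r=\mathcal{O}(z^{-1})$ to collapse $\Phi_l e^{i\mathcal{R}_l x\sigma_3}$ to $e^{i(\mathcal{R}_l-\mathcal{R}_r)x\sigma_3}+\mathcal{O}(z^{-1})=I+\mathcal{O}(z^{-1})$, the limit being taken in the standard Riemann--Hilbert sense as $z\to\infty$ along the real axis (equivalently, column-by-column in the half-plane where each column is bounded). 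Finally property (f) follows by composing two symmetries of the building blocks: the reflection $\mathcal{R}_j(-z)=-\mathcal{R}_j(z)$, $\mathcal{E}_j(-z)=\sigma_1\mathcal{E}_j(z)\sigma_1$ with the anticommutation $\sigma_1 e^{i\mathcal{R}x\sigma_3}=e^{-i\mathcal{R}x\sigma_3}\sigma_1$ yields $\Phi_j(x;-z)=\sigma_1\Phi_j(x;z)\sigma_1$, while the Schwarz reflection $\overline{\mathcal{R}_j(\bar z)}=\mathcal{R}_j(z)$, $\overline{\mathcal{E}_j(\bar z)}=\sigma_1\mathcal{E}_j(z)\sigma_1$ yields $\overline{\Phi_j(x;\bar z)}=\sigma_1\Phi_j(x;z)\sigma_1$; reading off the first columns gives $\Phi_j^{(1)}(x;-z)=\overline{\Phi_j^{(1)}(x;\bar z)}=\sigma_1\Phi_j^{(2)}(x;z)$, and composing the two reflections gives $\overline{\Phi_j(x;-\bar z)}=\Phi_j(x;z)$.

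I expect the main obstacle to be bookkeeping rather than analysis. The delicate point is fixing the fourth-root phases exactly right, namely $\beta_{j+}=i\beta_{j-}$ on the cut, $\beta_j(-z)=\beta_j^{-1}(z)$ and $\overline{\beta_j(\bar z)}=\beta_j(z)$ with no spurious sign, since a wrong phase would corrupt both the jump $-i\sigma_2$ in (d) and the conjugation matrices in (f). The conceptual crux is the entirety of $T_r$, i.e. the removal of the apparent $\pm q_r$ branch points; once that observation is in place, every one of the six properties reduces to elementary $2\times2$ matrix algebra applied to the explicit factors in (\ref{epsilon}), (\ref{jl}) and (\ref{jr}).
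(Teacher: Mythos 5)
Your proposal is correct, and in fact the paper offers no proof at all — the lemma is stated with ``it is easy to show that'', so you are supplying the verification the authors left implicit. All of your computations check out: $\det\mathcal{E}_j=\tfrac14\bigl[(\beta_j+\beta_j^{-1})^2-(\beta_j-\beta_j^{-1})^2\bigr]=1$; the jump $\beta_{j+}=i\beta_{j-}$ on $(-q_j,q_j)$ (with the branch fixed by $\beta_j\to1$ at infinity) indeed sends $(a_j,b_j)\mapsto(ib_j,ia_j)$ and yields $\mathcal{E}_{j-}^{-1}\mathcal{E}_{j+}=-i\sigma_2$; and the symmetry identities $\mathcal{E}_j(-z)=\overline{\mathcal{E}_j(\bar z)}=\sigma_1\mathcal{E}_j(z)\sigma_1=\mathcal{E}_j^{-1}(z)$ combined with $\sigma_1 e^{i\mathcal{R}x\sigma_3}=e^{-i\mathcal{R}x\sigma_3}\sigma_1$ give (f) exactly as you read it off column-wise.

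Two points in your write-up deserve emphasis because they are precisely where the ``easy'' verification has content. First, your observation that the cross-background factor $T_r(x;z)=\mathcal{E}_r e^{-i\mathcal{R}_r x\sigma_3}\mathcal{E}_r^{-1}=e^{xA(z)}=\cos(\mathcal{R}_r x)I+\mathcal{R}_r^{-1}\sin(\mathcal{R}_r x)A(z)$ is \emph{entire} (since $A(z)^2=-\mathcal{R}_r^2 I$ and both coefficient functions are even in $\mathcal{R}_r$) is the one genuinely structural ingredient: it removes the apparent branch points at $\pm q_r$ from the $x>0$ branch of $\Phi_l$, and it is what simultaneously delivers (b), the localization of the jump in (d) to $(-q_j,q_j)$, and the boundedness in (e). A naive reading of (\ref{jl}) would suggest $\Phi_l$ has a cut on all of $[-q_r,q_r]$, so this step cannot be skipped. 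Second, your caveat on (c) is the correct reading of the statement: the full-matrix estimate $\Phi_j e^{i\mathcal{R}_j x\sigma_3}=I+\mathcal{O}(z^{-1})$ holds uniformly only for $z\to\infty$ along $\mathbb{R}$ (the off-diagonal error in the non-matching branch carries a factor $e^{\pm i(\mathcal{R}_r+\mathcal{R}_l)x}$ that grows off-axis), with the column-wise version in the appropriate half-planes being what the Riemann--Hilbert construction of $\mathcal{M}$ actually uses — consistent with the paper's placement of $\Phi_l^{(1)}/a$ in $\mathbb{C}^+$. No gaps; the remaining steps are the bookkeeping you describe.
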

\par
Since the matrix-value functions $\Phi_l(x,t;z)$ and $\Phi_r(x,t;z)$ are two solutions of the Lax pair (\ref{lax}), they are linearly dependent, that is, there exists a scattering matrix $T(z)$ independent of $x$ and $t$, such that 
\begin{equation}
	T(z):=\Phi_r^{-1}(x,t;z)\Phi_l(x,t;z)=\Phi_r^{-1}(x;z)\Phi_l(x;z)=\mathcal{E}_r^{-1}(z)\mathcal{E}_l(z)=\begin{pmatrix}
		a(z)&b^{*}(z)\\
		b(z)&a^{*}(z)
	\end{pmatrix},
\end{equation}
where
\begin{equation}\label{a}
	\begin{aligned}
		a(z)&=a^*(z)=\frac{\beta_l(z)\beta_r^{-1}(z)+\beta_l^{-1}(z)\beta_r(z)}{2}, \\
		b(z)&=-b^*(z)=\frac{\beta_l(z)\beta_r^{-1}(z)-\beta_l^{-1}(z)\beta_r(z)}{2i},\\
		r(z)&:=\frac{b(z)}{a(z)}=-i\frac{\beta_l(z)^2-\beta_r(z)^2}{\beta_l(z)^2+\beta_r(z)^2}=-\frac{b^*(z)}{a^*(z)}=:-r^*(z). 
	\end{aligned}
\end{equation}
By calculation, it is immediate that: 
\par
\begin{lem}\label{abr}
	The spectral functions $a(z), b(z)$ and $r(z)$ have the following properties: \\
	(a) Functions $a(z)$, $b(z)$ and $r(z)$ are holomorphic for $z\in \mathbb{C}\backslash([-q_r,-q_l]\cup[q_l,q_r])$, where $r(z)$ satisfies $|r(z)|<1$, and they can be continuously extended to the boundary except at the points $\pm q_l,\pm q_r$. \\
	(b) Symmetries:
	\begin{equation}
		a(z)=a(-z)=\overline{a(-\bar{z})},\quad r(z)=-r(-z)=\overline{r(-\bar{z})}. 
	\end{equation}
	(c) On $(-q_r,-q_l)\cup(q_l,q_r)$, we have
	\begin{equation}
		a_+(z)=b_-^*(z), \quad b_+(z)=a_-^*(z), \quad r_+(z)=\frac{1}{r_-^*(z)}, \quad |r_{\pm}(z)|=1. 
	\end{equation}
	(d) If $q_r\ge q_l\ge0$, the function $a(z)$ defined by (\ref{a}) has no zeros in the complex plane, which indicates that no soliton appears in this case. 
\end{lem}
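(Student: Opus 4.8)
The plan is to read off every assertion directly from the explicit formula $\beta_j(z)=\left(\frac{z-q_j}{z+q_j}\right)^{1/4}$, $j\in\{l,r\}$, together with two elementary symmetries of $\beta_j$ that I will establish first: $\beta_j(-z)=\beta_j(z)^{-1}$ and the Schwarz relation $\overline{\beta_j(\bar z)}=\beta_j(z)$. Each follows by raising the left-hand side to the fourth power, observing that the result equals $\beta_j(z)^{\mp4}$ (resp.\ $\beta_j(z)^{4}$), so the quotient of the two sides is a fourth root of unity; since $\mathbb{C}\setminus[-q_j,q_j]$ is connected and both sides are continuous there, this root of unity is constant, and its value $1$ is fixed by letting $z\to+\infty$, where $\beta_j\to1$.

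For the holomorphy in (a) the decisive point is the comparison of the boundary values of $\beta_l$ and $\beta_r$ on the inner interval $(-q_l,q_l)$. For real $x$ with $|x|<q_l\le q_r$ one has $x-q_l<0$ and $x-q_r<0$ simultaneously, so both quotients $\frac{x-q_j}{x+q_j}$ are negative reals and, from above/below, $\beta_{j\pm}=\rho_j\,e^{\pm i\pi/4}$ with $\rho_j=\left|\frac{x-q_j}{x+q_j}\right|^{1/4}$ and the \emph{same} phase $e^{\pm i\pi/4}$ for $j=l$ and $j=r$. Hence $\beta_l/\beta_r$ and $\beta_r/\beta_l$ take equal boundary values from the two sides, so $a$ and $b$ in (\ref{a}) extend continuously, and therefore holomorphically, across $(-q_l,q_l)$; since there $a=\tfrac12(\rho_l/\rho_r+\rho_r/\rho_l)\ge1$, the quotient $r=b/a$ is holomorphic there as well. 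On the outer intervals $(q_l,q_r)$ and $(-q_r,-q_l)$, by contrast, $\beta_l$ is analytic while $\beta_r$ still jumps, so the cut genuinely survives, yielding the stated domain $\mathbb{C}\setminus([-q_r,-q_l]\cup[q_l,q_r])$, the four branch points being excluded because one of $\beta_l,\beta_r$ vanishes there. The bound $|r|<1$ on the real line outside the cuts is then immediate: for $|z|>q_r$ both $\beta_l^2,\beta_r^2$ are positive reals, while on $(-q_l,q_l)$ the boundary values give $\beta_l^2=i\rho_l^2$, $\beta_r^2=i\rho_r^2$; in either case $r=-i\frac{\beta_l^2-\beta_r^2}{\beta_l^2+\beta_r^2}$ is $-i$ times a quotient $\frac{u-v}{u+v}$ with $u,v>0$, whose modulus is strictly less than $1$.

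The symmetries in (b) follow by substituting the two relations for $\beta_j$ into (\ref{a}): replacing $z$ by $-z$ sends $\beta_j\mapsto\beta_j^{-1}$, which interchanges the summands $\beta_l/\beta_r\leftrightarrow\beta_r/\beta_l$ in $a$ (so $a(-z)=a(z)$) and negates their difference in $b$ (so $r(-z)=-r(z)$); composing with $\overline{\beta_j(\bar z)}=\beta_j(z)$ yields $a(z)=\overline{a(-\bar z)}$ and $r(z)=\overline{r(-\bar z)}$. For the jump relations in (c) I will work on $(q_l,q_r)$, where $\beta_l$ is real and analytic and $\beta_{r\pm}=\rho_r e^{\pm i\pi/4}$ with $\beta_{r-}=\overline{\beta_{r+}}$; writing $s=\beta_l/\rho_r$, a short computation gives $|a_\pm|^2=|b_\pm|^2=\tfrac14(s^2+s^{-2})$, hence $|r_\pm|=1$, and the identities $a_+=b_-^*$, $b_+=a_-^*$, $r_+=1/r_-^*$ follow from the Schwarz relation $f_-^*=\overline{f_+}$ together with these phase computations, the correct signs being fixed by the chosen orientation of the contour. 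The interval $(-q_r,-q_l)$ is then obtained by the $z\mapsto-z$ symmetry from (b).

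Finally, for (d) I write $a=\frac{\beta_l^2+\beta_r^2}{2\beta_l\beta_r}$, so a zero forces $f:=\beta_l^2/\beta_r^2=-1$; since $f^2-1=\frac{2z(q_r-q_l)}{(z+q_l)(z-q_r)}$, for $q_r>q_l$ the equation $f^2=1$ holds only at $z=0$, where the boundary values $\beta_l^2(0)=\beta_r^2(0)=i$ give $f(0)=1\ne-1$ (consistently with $a(0)=1$); hence $a$ is zero-free, and no discrete eigenvalues, i.e.\ no solitons, occur (for $q_r=q_l$ one simply has $a\equiv1$). Throughout, the main obstacle is the branch bookkeeping: pinning the phases $e^{\pm i\pi/4}$ on each interval, confirming that $\beta_l$ and $\beta_r$ share the \emph{same} jump on the inner interval $(-q_l,q_l)$ — the cancellation underlying the holomorphic continuation — but \emph{not} on the outer ones, and tracking the orientation of the cuts so that the $\pm$ labels in (c) carry the stated signs. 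These steps are routine but must be executed carefully, since an incorrect branch choice would spoil both the continuation across $(-q_l,q_l)$ and the unimodularity $|r_\pm|=1$.
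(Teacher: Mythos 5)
The paper offers no proof of this lemma beyond ``By calculation, it is immediate,'' so your direct verification from the explicit formula for $\beta_j$ is exactly the intended route, and most of it is correct: the two symmetries of $\beta_j$, the boundary phases $\beta_{j\pm}=\rho_j e^{\pm i\pi/4}$ (which are consistent with the paper's jump $\mathcal{E}_{j-}^{-1}\mathcal{E}_{j+}=-i\sigma_2$), the continuation of $a,b,r$ across $(-q_l,q_l)$, parts (b) and (d), and the computations $|a_\pm|^2=|b_\pm|^2=\tfrac14(s^2+s^{-2})$, $|r_\pm|=1$, $r_+=1/r_-^*$ all check out. The genuine gap is the one step you decline to execute, hiding it behind ``the correct signs being fixed by the chosen orientation of the contour.'' Carrying out that step with the paper's own conventions ($+$ denotes the boundary value from the upper half-plane, which your $-i\sigma_2$ check already pins down) gives on $(q_l,q_r)$, with $s=\beta_l/\rho_r$,
\begin{equation*}
a_\pm=\tfrac12\bigl(s\,e^{\mp i\pi/4}+s^{-1}e^{\pm i\pi/4}\bigr),\qquad b_\pm=\tfrac{1}{2i}\bigl(s\,e^{\mp i\pi/4}-s^{-1}e^{\pm i\pi/4}\bigr),
\end{equation*}
whence $a_+=b_-$ and $b_+=-a_-$. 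Since (\ref{a}) gives $a^*=a$ and $b^*=-b$ identically, these read $a_+=-b_-^*$ and $b_+=-a_-^*$ --- the \emph{opposite} signs to the first two identities printed in (c). The same conclusion follows structurally from $T_+=i\sigma_2 T_-$ on the gaps, and it is the version consistent with the paper's own later factorization of the jump on $(-q_r,-q_l)\cup(q_l,q_r)$: that factorization forces $(a_+a_-^*)^{-1}=-(r_++r_-^*)$, which (using $r^*=-r$, $a^*=a$) amounts to $a_+^2+a_-^2=1$ and holds for $a_+=b_-$, $b_+=-a_-$, but fails by a sign for the printed identities. So the lemma's (c) carries a star/sign slip, and an appeal to orientation cannot rescue it; an honest execution of your ``short computation'' would have exposed this rather than confirmed the statement. (The downstream facts actually used --- $|r_\pm|=1$, $r_+=1/r_-^*$, and the factorization identity --- are unaffected, and your proofs of those are sound.)

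A second, smaller shortfall: the lemma asserts $|r(z)|<1$ on all of $\mathbb{C}\backslash([-q_r,-q_l]\cup[q_l,q_r])$, whereas you prove it only on the real axis. The full claim is true and closable within your framework: $|r|<1$ is equivalent to $\Re\bigl(\beta_l^2/\beta_r^2\bigr)>0$, and $g:=\beta_l^2/\beta_r^2$ can meet the imaginary axis only where $g^2=\frac{(z-q_l)(z+q_r)}{(z+q_l)(z-q_r)}\in(-\infty,0]$; your own computation $h(z)=h(w)\iff w=z$ or $zw=-q_lq_r$ shows this rational function is real only on $\mathbb{R}$ (for $q_lq_r>0$), and the sign analysis on $\mathbb{R}$ confines the locus to the two cuts, so $\Re g>0$ throughout each half-plane by connectedness and $g(\infty)=1$. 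Finally, a cosmetic slip: at $z=-q_j$ it is $\beta_j^{-1}$, not $\beta_j$, that vanishes ($\beta_j$ blows up), though either way $a,b$ are unbounded there, as needed for excluding the four branch points. With the corrected signs in (c) and the off-axis bound for $|r|$, your verification is complete and is, in substance, the calculation the paper leaves to the reader.
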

\par
\subsection{Riemann-Hilbert problem in the generic case}\
\par
Construct the piecewise analytic function
\begin{equation}
	\mathcal{M}(x,t;z)=\begin{cases}
		(\frac{\Phi^{(1)}_l(x,t;z)}{a(z)}, \Phi^{(2)}_r(x,t;z))e^{it\theta(\xi,z)\sigma_3}, z\in\mathbb{C}^+, \\ 
		(\Phi^{(1)}_r(x,t;z),\frac{\Phi^{(2)}_l(x,t;z)}{a^*(z)})e^{it\theta(\xi,z)\sigma_3}, z\in\mathbb{C}^-,
	\end{cases}
\end{equation}
where 
\begin{equation}
	\theta(\xi;z)=4z^3+12\xi z, \quad \xi=\frac{x}{12t}. 
\end{equation}
Then the function $\mathcal{M}(x,t;z)$ satisfies the following RHP:
\par 
\begin{rhp}\label{rhp}
	Find a $2\times2$ complex matrix function $\mathcal{M}(x,t;z)$ that satisfies: \\
	(1) $\mathcal{M}(x,t;z)$ is holomorphic for $z\in\mathbb{C}\backslash\mathbb{R}$. \\
	(2) For $z\in\mathbb{R}$, the function $\mathcal{M}(x,t;z)$ takes continuous boundary values $\mathcal{M}_{\pm}(x,t;z)$ which satisfy the jump relation
	\begin{equation}
		\mathcal{M}_+(x,t;z)=\mathcal{M}_-(x,t;z)\mathcal{J}(x,t;z),
	\end{equation}
	where
	\begin{equation}\label{v}
		\mathcal{J}(z)=\begin{cases}
			\begin{aligned}
				&\begin{pmatrix}
					1-r(z)r^*(z)&-r^*(z)e^{-2it \theta(\xi;z)}\\r(z)e^{2it \theta(\xi;z)}&1
				\end{pmatrix}, &z\in(-\infty,-q_r)\cup(q_r,\infty), \\
				&\begin{pmatrix}
					(a_+(z)a_-^*(z))^{-1}&-e^{-2it \theta(\xi;z)}\\e^{2it \theta(\xi;z)}&0
				\end{pmatrix}, &z\in (-q_r,-q_l)\cup(q_l,q_r), \\ 
				&\begin{pmatrix}
					0&-e^{-2it \theta(\xi;z)}\\e^{2it \theta(\xi;z)}&0
				\end{pmatrix}, &z\in (-q_l,q_l).
			\end{aligned}
		\end{cases}
	\end{equation}
	(3) $\mathcal{M}(z)=I+\mathcal{O}(z^{-1})$, as $z\to\infty$ in $\mathbb{C}\backslash\mathbb{R}$. \\
	(4) Symmetries: 
	\begin{equation}
		\mathcal{M}(z)=\overline{\mathcal{M}(-\bar{z})}=\sigma_1\mathcal{M}(-z)\sigma_1=\sigma_1\overline{\mathcal{M}(\bar{z})}\sigma_1.
	\end{equation}
\end{rhp}
\par
The existence and uniqueness of the solution to RHP \ref{rhp} follows from the vanishing lemma for Schwartz-symmetric RHPs \cite{Zhou1989}. Moreover, let $\mathcal{M}_{12}(x,t;z)$ denote the $(1,2)$-entry of $\mathcal{M}(x,t;z)$. By combining the Lax pair (\ref{lax}) with the asymptotic expansion of $\Phi_j(x,t;z)e^{it\theta(\xi;z)\sigma_3}$ for $j\in\{r,l\}$ as $z\to\infty$, the reconstruction formula of potential function $q(x,t)$ is obtained 
\begin{equation}\label{construct}
	q(x,t)=2i\lim_{z\to\infty}z\mathcal{M}_{12}(x,t;z), \quad (x,t)\in \mathbb{R}\times\mathbb{R}_{\ge 0}. 
\end{equation}

\begin{rmk}\label{a(0)=0}
	For the cases of $-q_r \ge q_l \ge 0$ and $-q_r \ge q_l \ge 0$ (excluding the trivial case $q_r=q_l=0$), a corresponding direct scattering analysis shows that $z=0$ is a simple zero of the scattering coefficient $a(z)=[\beta_l(z)\beta_r(z)+\beta_l^{-1}(z)\beta_r^{-1}(z)]/2$. Consequently, the matrix-valued function $\mathcal{M}(x,t;z)$ has a simple pole at $z=0$. As established in previous work \cite{Jenkins2016}, this singularity induces the formation of a kink soliton in the solution $q(x,t)$ of the defocusing mKdV equation (\ref{mkdv}), see Figure \ref{kink} for details. 
\end{rmk}
\par

\subsection{Possible deformations of jump matrices}\

In what follows, we present several key matrix factorizations that will be employed in the subsequent contour deformations onto the steepest descent paths. These factorizations are organized according to their respective intervals of application. For brevity, the off-diagonal exponential factors have been omitted, which may be reintroduced through left and right multiplication by the appropriate diagonal matrices. 
\par
For $z\in(-\infty,-q_r)\cup(q_r,+\infty)$, the matrix factorizations below hold
\begin{equation}
	\begin{aligned}
		\begin{pmatrix}
			1-r(z)r^*(z)&-r^*(z)\\r(z)&1
		\end{pmatrix}=&
		\begin{pmatrix}
			1&-r^*(z)\\0&1
		\end{pmatrix}
		\begin{pmatrix}
			1&0\\r(z)&1
		\end{pmatrix}\\
		=&\begin{pmatrix}
			1&0\\ \frac{r(z)}{1-r(z)r^*(z)}&1
		\end{pmatrix}
		(1-r(z)r^*(z))^{\sigma_3}
		\begin{pmatrix}
			1&\frac{-r^*(z)}{1-r(z)r^*(z)}\\0&1
		\end{pmatrix}. 
	\end{aligned}
\end{equation}
\par
For $z\in(-q_r,-q_l)\cup(q_l,q_r)$, we have
\begin{equation}
	\begin{aligned}
		\begin{pmatrix}
			\frac{1}{a_+a_-^*}&-1\\1&0
		\end{pmatrix}=&\begin{pmatrix}
			1&-r_-^*\\0&1
		\end{pmatrix}
		\begin{pmatrix}
			0&-1\\1&0    
		\end{pmatrix}\
		\begin{pmatrix}
			1&0\\r_+&1
		\end{pmatrix}\\
		=& \begin{pmatrix}
			1&0\\ \frac{r_-(z)}{1-r_-(z)r^*_-(z)}&1
		\end{pmatrix}
		(a_+(z)a_-^*(z))^{-\sigma_3}
		\begin{pmatrix}
			1&\frac{-r_+^*(z)}{1-r_+(z)r_+^*(z)}\\0&1
		\end{pmatrix}. 
	\end{aligned}
\end{equation}
\par
For $z\in(-q_l,q_l)$, the matrix factorizations below hold
\begin{equation}
	\begin{aligned}
		\begin{pmatrix}
			0&-1\\1&0
		\end{pmatrix}=&
		\begin{pmatrix}
			1&-r^*(z)\\0&1
		\end{pmatrix}
		\begin{pmatrix}
			0&-1\\1&0
		\end{pmatrix}
		\begin{pmatrix}
			1&0\\r(z)&1
		\end{pmatrix}\\
		=&\begin{pmatrix}
			1&0\\ \frac{r(z)}{1-r(z)r^*(z)}&1
		\end{pmatrix}
		\begin{pmatrix}
			0&-1\\1&0
		\end{pmatrix}
		\begin{pmatrix}
			1&\frac{-r^*(z)}{1-r(z)r^*(z)}\\0&1
		\end{pmatrix}. 
	\end{aligned}
\end{equation}
\par

\section{Constructing the $g$-functions of self-similar wave motion}\label{section3}

To conduct the long-time asymptotic analysis for RHP \ref{rhp}, it is reasonable to employ the Deift-Zhou nonlinear steepest descent method \cite{DZ1993}, which serves as the fundamental tool for investigating oscillatory RHPs. The initial step involves constructing a ``g"-function, designed to renormalize the oscillatory or exponentially growing factors $e^{\pm it\theta(\xi;z)}$ in the jump matrices while maintaining asymptotic consistency with $\theta(\xi;z)$ at infinity, that is
\begin{itemize}\label{g-theta}
	\item $g(\xi;z)=\theta(\xi;z)+\mathcal{O}(z^{-1}), \quad {\rm as}~|z|\to\infty. $
\end{itemize}
\par
Following the approach proposed in \cite{Jenkins2015}, a general method to construct the $g$-function of genus $2G$ $(G\ge 0)$ for the defocusing mKdV equation is proposed. Based on this framework, we explicitly derive the expressions for the $g$-function of genus zero and genus two, which are admissible under self-similar motion via the method in Ref. \cite{Grava2002}. The detailed process of constructing $g$-function are presented below. 
\par
Suppose that a set of points $\{z_1,z_2,\cdots,z_{2G+1}\}$ is given on the real axis such that $z_{2G+1}>z_{2G}>\cdots>z_1>0$. Denote the set $\mathcal{I}= \bigcup_{k=-G}^{G}\mathcal{I}_k$, where 
\begin{equation}
	\mathcal{I}_0=(-z_1,z_1), \quad  \mathcal{I}_k=(z_{2k},z_{2k+1}),\quad \mathcal{I}_{-k}=(-z_{2k+1},-z_{2k}), \quad k=1,2,\cdots,G, 
\end{equation} 
which are known as ``bands". At the same time, define the set $\mathcal{I}'=\bigcup_{k=1}^{G}\left(\mathcal{I}'_k\cup\mathcal{I}'_{-k}\right)$, where 
\begin{equation}
	\mathcal{I}'_k=(z_{2k-1},z_{2k}),\quad\mathcal{I}'_{-k}=(-z_{2k},-z_{2k-1}), \quad k=1,2,\cdots,G, 
\end{equation}
which are known as ``gaps". Assume that there exists a set of constants $\{d_0\}\cup \{d_{k},d_{-k}\}_{k=1}^{G}$ such that the $g$-function satisfies:
\begin{itemize}
	\item   $g(\xi;z)$ is holomorphic for $z\in \mathbb{C}\backslash\bar{\mathcal{I}}$. 
	\item $g_+(\xi;z)+g_-(\xi;z)=d_i(\xi) \text{ for } z\in\mathcal{I}_i,\text{ } i=-G,-G+1, \cdots,0, \cdots, G-1, G$. 
	\item For $z\in \mathbb{C}\backslash\bar{\mathcal{I}}$, the Schwartz symmetry $g(\xi;z)=g^*(\xi;z)$ holds. 
\end{itemize}
Furthermore, to preserve the symmetry of the RHP \ref{rhp} under the gauge transformation $\widetilde{\mathcal{M}}(x,t;z)=\mathcal{M}(x,t;z) e^{it(g(\xi;z)-\theta(\xi;z))}$, the following symmetry conditions on $g(\xi;z)$ are imposed: 
\begin{itemize}
    \item For $z\in\mathbb{C}$, the symmetry $g(\xi;z)=g(\xi;-\bar{z})=-g(\xi;-z)=-g(\xi;\bar{z})$ holds.
\end{itemize}
\par
In order to achieve this conditions, introduce a compact Riemann surface of genus $2G$ for $G\geq 0$ as 
\begin{equation}
	\mathcal{S}_G:=\{P=(z,\mathcal{R}_G):~ (\mathcal{R}_G)^2=\prod_{i=1}^{2G+1}(z^2-z_j^2) \}, 
\end{equation}
equipped with the projective map $\pi:\mathcal{S}_G\to\mathbb{C}P^1$ given by $\pi(P)=z$, which makes $\mathcal{S}_G$ to be a two-sheeted covering of $\mathbb{C}P^1$. Select a basis $\{a_j,b_j,a_{-j},b_{-j}\}_{j=1}^{G}$ for the first homology group $H_1(\mathcal{S}_G)$, such that (refer to Figure \ref{cycle}): $a_j$ ($a_{-j}$, respectively) lies entirely on the upper sheet and encircles $\bar{\mathcal{I}}_{-G}$ ($\bar{\mathcal{I}}_G$, respectively) counterclockwise (clockwise, respectively), while $b_j$ ($b_{-j}$, respectively) starts from $\mathcal{I}_{0}$, travels counterclockwise (clockwise, respectively) through the upper sheet to $\mathcal{I}_{-G}$ ($\mathcal{I}_G$, respectively), and passes through $\mathcal{I}_{-G}$ ($\mathcal{I}_G$, respectively) to the lower sheet, then  returns to the starting point via the lower sheet. 
\par
\begin{figure}[ht]
	\centering
	\begin{tikzpicture}[scale=0.9]
		\draw[line width=5pt,gray] (1,0) -- (-1,0) node[pos=0,below=-0.5mm,black]{$z_1$} node[pos=1,below=-0.5mm,black]{$-z_1$};
		\draw[line width=5pt,gray] (-2,0) -- (-4,0) node[pos=0,below=-0.5mm,black]{$-z_2$} node[pos=1,below=-0.5mm,black]{$-z_3$};
		\fill[black] (-4.8,0) circle (2pt);   
		\fill[black] (-5,0) circle (2pt);
		\fill[black] (-5.2,0) circle (2pt);
		\draw[line width=5pt,gray] (-6,0) -- (-8,0) node[pos=0,below=-0.5mm,black]{$-z_{2G}$} node[pos=1,below=-0.5mm,black]{$-z_{2G+1}$};
		\draw[line width=5pt,gray] (2,0) -- (4,0) node[pos=0,below=-0.5mm,black]{$z_2$} node[pos=1,below=-0.5mm,black]{$z_3$};
		\fill[black] (4.8,0) circle (2pt);   
		\fill[black] (5,0) circle (2pt);
		\fill[black] (5.2,0) circle (2pt);
		\draw[line width=5pt,gray] (6,0) -- (8,0) node[pos=0,below=-0.5mm,black]{$z_{2G}$} node[pos=1,below=-0.5mm,black]{$z_{2G+1}$};
		\draw[line width=2pt,red!40!white] (-1.5,0) to [out=90, in=90, looseness=0.9] (-4.5,0);
		\draw[line width=2pt,red!40!white] (-4.5,0) to [out=-90, in=-90, looseness=0.9] (-1.5,0);
		\node[red!40!white] at (-3,1){$a_1$};
		\draw[red!40!white,line width=1pt,-{Stealth[length=3mm, width=2mm]}] (-3.11,0.8) -- (-3.12,0.8);
            \draw[red!40!white,line width=1pt,-{Stealth[length=3mm, width=2mm]}] (3.11,0.8) -- (3.12,0.8);
		\draw[line width=2pt,red!40!white] (1.5,0) to [out=90, in=90, looseness=0.9] (4.5,0);
		\draw[line width=2pt,red!40!white] (4.5,0) to [out=-90, in=-90, looseness=0.9] (1.5,0);
		\node[red!40!white] at (3,1){$a_{-1}$};
		\draw[line width=2pt,red!40!white] (-5.5,0) to [out=90, in=90, looseness=0.9] (-8.5,0);
		\draw[line width=2pt,red!40!white] (-8.5,0) to [out=-90, in=-90, looseness=0.9] (-5.5,0);
		\node[red!40!white] at (-7,1){$a_G$};
		\draw[red!40!white,line width=1pt,-{Stealth[length=3mm, width=2mm]}] (-7.11,0.8) -- (-7.12,0.8);
		\draw[line width=2pt,red!40!white] (5.5,0) to [out=90, in=90, looseness=0.9] (8.5,0);
		\draw[line width=2pt,red!40!white] (8.5,0) to [out=-90, in=-90, looseness=0.9] (5.5,0);
		\node[red!40!white] at (7,1){$a_{-G}$};
		\draw[red!40!white,line width=1pt,-{Stealth[length=3mm, width=2mm]}] (7.11,0.8) -- (7.12,0.8);
		\draw[line width=2pt,blue!40!white] (-0.2,0) to [out=90, in=90, looseness=0.6] (-7,0);
		\draw[dashed,line width=2pt,blue!40!white] (-7,0) to [out=-90, in=-90, looseness=0.6] (-0.2,0);
		\draw[blue!40!white,line width=1pt,-{Stealth[length=3mm, width=2mm]}] (-3.91,1.2) -- (-3.92,1.2);
		\node[blue!40!white] at (-4,1.5){$b_G$};
		\draw[line width=2pt,blue!40!white] (0.2,0) to [out=90, in=90, looseness=0.6] (7,0);
		\draw[dashed,line width=2pt,blue!40!white] (7,0) to [out=-90, in=-90, looseness=0.6] (0.2,0);
		\draw[blue!40!white,line width=1pt,-{Stealth[length=3mm, width=2mm]}] (3.91,1.2) -- (3.92,1.2);
		\node[blue!40!white] at (4,1.5){$b_{-G}$};
		\draw[line width=2pt,blue!40!white] (-0.4,0) to [out=90, in=90, looseness=0.6] (-3,0);
		\draw[dashed,line width=2pt,blue!40!white] (-3,0) to [out=-90, in=-90, looseness=0.6] (-0.4,0);
		\draw[blue!40!white,line width=1pt,-{Stealth[length=3mm, width=2mm]}] (-1.91,0.46) -- (-1.92,0.46);
		\node[blue!40!white] at (-1.7,0.7){$b_1$};
		\draw[line width=2pt,blue!40!white] (0.4,0) to [out=90, in=90, looseness=0.6] (3,0);
		\draw[dashed,line width=2pt,blue!40!white] (3,0) to [out=-90, in=-90, looseness=0.6] (0.4,0);
		\draw[blue!40!white,line width=1pt,-{Stealth[length=3mm, width=2mm]}] (1.91,0.46) -- (1.92,0.46);
		\node[blue!40!white] at (1.7,0.7){$b_{-1}$};
	\end{tikzpicture}
	\caption{The basis $\{a_j,b_j,a_{-j},b_{-j}\}_{j=1}^{G}$ for homology group $H_1(\mathcal{S}_G)$, where $\mathcal{S}_G$ is the genus
$2G$ hyperelliptic Riemann surface.}
	\label{cycle}
\end{figure}
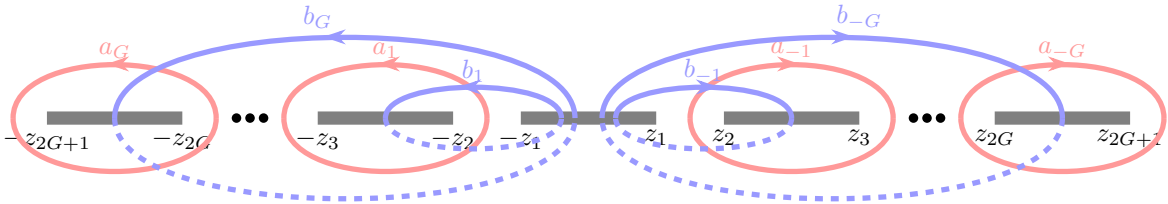
\par
Denote $\omega^{(k)}~(k=0,1)$ as Abelian differentials of the second kind on Riemann surface $\mathcal{S}_G$,  given by
\begin{equation}\label{poly}
	\begin{aligned}
		&\omega^{(k)}=\frac{\mathcal{P}_{k,G}(z;\bm{z})}{\mathcal{R}_G(z;\bm{z})}dz, \\
		&\mathcal{P}_{k,G}(z;\bm{z})= z^{2k+2G+1}+p_1 z^{2k+2G}+\cdots+p_{2k} z^{2G}+p_{k,1}z^{2G-1}+\cdots+p_{k,2G}, 
	\end{aligned}
\end{equation}
where $p_j=p_j(\bm{z})~(j=1,2,\cdots, 2k)$ are the coefficients of the expansion 
\begin{equation}\label{coe}
	\mathcal{R}_G(z;\bm{z})=\left(\prod_{i=1}^{2G+1}(z^2-z_j^2)\right)^{\frac{1}{2}}=z^{2G+1}\left(1+\frac{p_1}{z}+\cdots+\frac{p_m}{z^m}+\cdots\right), 
\end{equation}
and the $p_{k,j}=p_{k,j}(\bm{z})~(j=1,2, \cdots, 2G)$ are determined by the normalization conditions
\begin{equation}\label{normal_a}
	\oint_{a_j}\omega^{(k)}=0,\quad j=1,2, \cdots,  2G. 
\end{equation}
Additionally, the definition of Abel differential of the second kind tells us that 
\begin{equation}\label{w_infty}
	\omega^{(k)}=\pm[z^{k}+\mathcal{O}(z^{-2})]dz, \qquad {\rm as}~~P\to (\infty,\pm\infty). 
\end{equation}
\par
\begin{lem} It can be proved that $p_j~(j=1,2,\cdots, 2k)$ and $p_{k,j}~(j=1,2, \cdots, 2G)$ satisfy the following properties: 
	(a) $p_j=0$ for $j\in2\mathbb{N}+1$. 
	(b) $p_{k,j}=0$ for  $j\in 2\mathbb{N}+1$. 
\end{lem}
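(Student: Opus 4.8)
The plan is to derive both parts from the single discrete symmetry $z\mapsto -z$ that underlies the whole construction, realized geometrically as the holomorphic involution $\iota\colon \mathcal{S}_G\to\mathcal{S}_G$, $\iota(z,\mathcal{R}_G)=(-z,-\mathcal{R}_G)$; this is well defined because $\mathcal{R}_G(z;\bm{z})=(\prod_{i=1}^{2G+1}(z^2-z_i^2))^{1/2}$ is an odd function of $z$, i.e. $\mathcal{R}_G(-z)=-\mathcal{R}_G(z)$, as one sees by writing $\mathcal{R}_G(z)=z^{2G+1}\prod_{i}(1-z_i^2/z^2)^{1/2}$ with the second factor even in $z$. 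Part (a) is then immediate: since $\mathcal{R}_G$ is odd, only odd powers of $z$ occur in its Laurent expansion at infinity, so in $\mathcal{R}_G=z^{2G+1}(1+\sum_{m\ge1}p_m z^{-m})$ the coefficient $p_m$ of $z^{2G+1-m}$ must vanish whenever $2G+1-m$ is even, that is, whenever $m$ is odd; this is exactly (a).

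For part (b) I would show that the involution forces $\mathcal{P}_{k,G}$ to have definite parity and then read off the vanishing coefficients. First I would record how $\iota$ acts on $\omega^{(k)}=\tfrac{\mathcal{P}_{k,G}(z)}{\mathcal{R}_G(z)}\,dz$: a direct computation using $\iota^*dz=-dz$ and $\mathcal{R}_G(-z)=-\mathcal{R}_G(z)$ gives $\iota^*\omega^{(k)}=\tfrac{\mathcal{P}_{k,G}(-z)}{\mathcal{R}_G(z)}\,dz$. Next I would check the three defining data of the normalized Abelian differential of the second kind. The involution fixes the two points at infinity $\infty_{\pm}$ (because $\mathcal{R}_G/z^{2G+1}$ is $\iota$-invariant), and near $\infty_{+}$ the prescribed principal part behaves like $z^{2k}\,dz$, which satisfies $\iota^*(z^{2k}\,dz)=-z^{2k}\,dz$, so $\iota^*\omega^{(k)}$ carries exactly the principal parts of $-\omega^{(k)}$ at $\infty_{\pm}$. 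Moreover $\iota$ sends each $a$-cycle around a band on the negative axis to a cycle homologous to $\pm$ an $a$-cycle on the positive axis, so $\oint_{a_j}\iota^*\omega^{(k)}=\pm\oint_{a_{-j}}\omega^{(k)}=0$ by the normalization (\ref{normal_a}); hence $\iota^*\omega^{(k)}$ is again $a$-normalized. By uniqueness of the second-kind differential with given principal parts and vanishing $a$-periods, $\iota^*\omega^{(k)}=-\omega^{(k)}$, i.e. $\mathcal{P}_{k,G}(-z)=-\mathcal{P}_{k,G}(z)$, so $\mathcal{P}_{k,G}$ is an odd polynomial. Combining this with part (a) (which already makes the determined top block $z^{2k+2G+1}+p_1z^{2k+2G}+\cdots$ odd) shows that every coefficient of $\mathcal{P}_{k,G}$ multiplying an even power of $z$ must vanish; reading these off in the free lower block $\sum_j p_{k,j}z^{2G-j}$ yields the asserted relations $p_{k,j}=0$, which proves (b).

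I expect the main obstacle to be the rigorous justification of $\iota^*\omega^{(k)}=-\omega^{(k)}$ rather than the bookkeeping of parities. Concretely, one must treat both punctures $\infty_{\pm}$ simultaneously and match the \emph{full} principal parts there (not merely the leading term), so that the difference $\iota^*\omega^{(k)}+\omega^{(k)}$ is genuinely a \emph{holomorphic} differential; only then does the vanishing of its $a$-periods force it to be identically zero. A secondary point requiring care is that $\iota$ permutes the homology basis only up to orientation, so one should verify that the induced action on the $a$-cycles indeed maps $\{a_{\pm j}\}$ into itself, which is what guarantees that the zero-$a$-period normalization is preserved. Alternatively, one can bypass the uniqueness statement and argue directly: expand the unknown part of $\omega^{(k)}$ in the standard holomorphic basis $\eta_j=\tfrac{z^{2G-j}}{\mathcal{R}_G}\,dz$, note $\iota^*\eta_j=(-1)^{j}\eta_j$, and show that the linear system (\ref{normal_a}) decouples along the $\pm1$ eigenspaces of $\iota$, so that the components lying in the wrong-parity eigenspace are forced to vanish; this reproduces the same conclusion and may be the cleaner route to present.
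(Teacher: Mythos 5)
Your proof is correct, but it takes a genuinely different route from the paper's for part (b). Part (a) is identical: the paper also substitutes the odd symmetry $\mathcal{R}_G(-z;\bm{z})=-\mathcal{R}_G(z;\bm{z})$ (off the cuts) into the expansion (\ref{coe}). For part (b), however, the paper argues computationally: it uses the \emph{even} symmetry of the boundary values, $\mathcal{R}_{G+}(-z)=\mathcal{R}_{G+}(z)$ for $z\in\mathcal{I}$, substitutes this into the normalization conditions (\ref{normal_a}), and invokes Cramer's rule — that is, it observes that under $z\mapsto-z$ the linear system for the $p_{k,j}$ decouples by parity, so the wrong-parity unknowns satisfy a homogeneous nonsingular system and must vanish. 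This is essentially the ``alternative'' argument you sketch at the end: your eigenspace decomposition of the system under $\iota^*\eta_j=(-1)^j\eta_j$ is the structured version of the paper's Cramer's-rule step. Your primary route — lifting $z\mapsto-z$ to the involution $\iota$ on $\mathcal{S}_G$, checking that $\iota$ fixes $\infty_\pm$ and permutes the $a$-cycles up to orientation, and concluding $\iota^*\omega^{(k)}=-\omega^{(k)}$ from uniqueness of the $a$-normalized differential of the second kind with prescribed principal parts — is more conceptual and exposes the mechanism (the polynomial $\mathcal{P}_{k,G}$ is forced to be odd), at the cost of invoking the uniqueness theorem; the paper's route is more elementary, though Cramer's rule implicitly relies on the same standard nondegeneracy fact (invertibility of the $a$-period matrix of holomorphic differentials, equivalently that a holomorphic differential with vanishing $a$-periods is zero). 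Your flagged care points are exactly right and worth keeping: since residue terms $cz^{-1}\,dz$ and odd intermediate powers $z^{j}\,dz$ are $\iota^*$-\emph{invariant} rather than anti-invariant, the difference $\iota^*\omega^{(k)}+\omega^{(k)}$ is holomorphic only because the full expansion $\pm\left[z^{2k}+\mathcal{O}(z^{-2})\right]dz$ of (\ref{w_infty}) holds (which itself uses part (a) to eliminate the residue term); matching only the leading order would not suffice. You also correctly use $z^{2k}$ where (\ref{w_infty}) misprints $z^{k}$.

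One bookkeeping item you should state explicitly rather than gloss: what your parity argument (and the paper's own proof) actually yields is $p_{k,j}=0$ for \emph{even} $j$, since $p_{k,j}$ multiplies $z^{2G-j}$ and $2G-j$ is even exactly when $j$ is. This matches the paper's explicit formulas in Section \ref{section3} — e.g.\ $\mathcal{P}_{0,1}=z^3+p_{0,1}z$ and $\mathcal{P}_{1,1}=z^5-\tfrac{z_1^2+z_2^2+z_3^2}{2}z^3+p_{1,1}z$, where the \emph{surviving} coefficients carry the odd index $j=1$ — so the ``$j\in2\mathbb{N}+1$'' in statement (b) of the lemma is evidently a misprint for $j\in2\mathbb{N}$. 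Your argument proves the corrected statement; saying you have proved ``the asserted relations'' as literally printed would be false, so make the parity count explicit in your write-up.
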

\par
\begin{proof}
	(a) Upon selecting a single-valued branch, the relation $\mathcal{R}_G(-z;\bm{z})+\mathcal{R}_G(z;\bm{z})=0$ holds for all $z\in\mathbb{C}\backslash\bar{\mathcal{I}}$. Substituting this into the expansion (\ref{coe}) leads to the conclusion that $p_j=0$ for $j\in2\mathbb{N}+1$. (b) Observe that the relation $ \mathcal{R}_G(-z;\bm{z})=\mathcal{R}_G(z;\bm{z})$ is satisfied for all $z\in\mathcal{I}$. By substituting this identity into (\ref{normal_a}) and employing Cramer's rule, the desired result of $p_{k,j}=0$ for  $j\in 2\mathbb{N}+1$ is obtained.
\end{proof}
\par
Now we are ready to construct a differential $dg$ which satisfies the following properties: 
\begin{itemize}
	\item $dg$ is meromorphic on $\mathcal{S}_G$ and has only poles at $(\infty, \pm\infty)$. 
	\item $dg\mp d\theta=\mathcal{O}(z^{-2})dz$, as $P\to(\infty,\pm\infty)$. 
	\item $\oint_{a_j}dg=0$ for $j=1,2, \cdots, 2G$. 
\end{itemize}
In fact, for any selection of $\bm{z}$, the three conditions above determine a meromorphic differential of the second kind, which is given by
\begin{equation}
	dg=12\omega^{(1)}+12\xi\omega^{(0)}. 
\end{equation}
Thus, define a $g$-function $g(\xi;z)$ of the form
\begin{equation}\label{gfunction}
	g(\xi;z):=\int_{z_1}^{z}dg, 
\end{equation}
which is holomorphic for $z\in \mathbb{C}\backslash\mathcal{I}$ and satisfies the jump relations
\begin{equation}
	g_+(\xi;z)+g_-(\xi;z)=\begin{cases}
		\begin{aligned}
			&\quad0,&z\in\mathcal{I}_0,\qquad\qquad\qquad\\
			&\oint_{b_{j}}dg,&z\in\mathcal{I}_{-j},j=1,2, \cdots, G, \\
			&\oint_{b_{-j}}dg=-\oint_{b_{j}}dg,&z\in\mathcal{I}_{j},j=1,2, \cdots, G, \hspace{0.5em}
		\end{aligned}
	\end{cases}
\end{equation}
by applying the symmetry of $\mathcal{R}_G(z;\bm{z})$ for $z\in\mathcal{I}$. 
\par
The symmetries of function $g(\xi;z)$ for $z\in\mathbb{C}\backslash\bar{\mathcal{I}}$ follow directly from its definition. As for its symmetry on $\mathcal{I}$, we defer the discussion to Remark \ref{symmetry_I} below. Moreover, by the asymptotic property \eqref{w_infty}, the functions $g(\xi;z)$ and $\theta(\xi;z)$ share identical behavior at infinity. Thus, $g(\xi;z)$ defined in (\ref{gfunction}) is indeed the required $g$-function. 
\par
\begin{rmk}
	In particular, if a given $z_i$ is permitted to vary within the domain $(x,t)\in\mathcal{D}\subset\mathbb{R}\times\mathbb{R}_{\ge 0}$, which is called ``soft-edge" as named in \cite{Jenkins2015} and its dynamics are governed by the condition that the differential $dg$ vanishes quadratically at $z_i$ (when treated as a branch point). This requires that $12\mathcal{P}_{1,G}(z;\bm{z})+12\xi \mathcal{P}_{0,G}(z;\bm{z})$ possesses a simple zero at $z=z_i$, or equivalently,  one has
	\begin{equation}\label{motion}
		x-v_G(\bm{z})t=0,\qquad v_G(\bm{z})=-12\cdot\frac{\mathcal{P}_{1,G}(z_i;\bm{z})}{\mathcal{P}_{0,G}(z_i;\bm{z})}. 
	\end{equation}
    Equation (\ref{motion}) establishes that the evolution of the branch point $z_i$ is characterized by the self-similar solutions of the genus-$G$ Whitham equations for the defocusing mKdV equation (\ref{mkdv}). 
\end{rmk}
\par
\begin{rmk}\label{symmetry_I}
    Regarding the symmetry of the function $g(\xi;z)$ for $z\in\mathcal{I}$, it is necessary to account for its jump conditions. If we consider the upper sheet $g_+(\xi;z)$, then $g(\xi;-z)$, $g(\xi;\bar{z})$ and $g(\xi;-\bar{z})$ correspond to $g_-(\xi;-z)$, $g_-(\xi;z)$ and $g_+(\xi;-z)$, respectively. To preserve this symmetry, choose $z_1$, one of the 
    branch points closest to the origin, as the integration origin for $dg$. (Alternatively, $-z_1$, may be selected without loss of generality, as the integral constraint $\int_{z_1}^{-z_1}dg_+=0$ ensures consistency.) This approach differs from the conventional methodology adopted in Ref. \cite{Jenkins2015} where integration begins at the rightmost branch point $z_{2G+1}$. In fact, the reverse arrangement of variables in relation (\ref{relationK}) in Section \ref{section5} further supports this choice, presenting a direct contrast to the positive ordering used in the focusing mKdV equation \cite{Minakov2020}.
\end{rmk}
\par
\subsection{Self-similar $g$-function of genus zero for $G=0$} \

For Riemann surface of genus zero, we have $\bm{z}=(-z_1,z_1)$, and the first homology group is trivial since any closed loop is contractible, then it follows from (\ref{coe}) and simple calculation that
\begin{equation}
	\mathcal{P}_{0,0}(z;\bm{z})=z,\quad
	\mathcal{P}_{1,0}(z;\bm{z})=z^3-\frac{z_1^2}{2}z. 
\end{equation}
Thus the differential $dg_0$ is given by 
\begin{equation}
	dg_0=\frac{12(z^3-\frac{z_1^2}{2}z)+12\xi z}{\mathcal{R}_0(z;\bm{z})}dz=\frac{12z(z+z^{(0)})(z-z^{(0)})}{\mathcal{R}_0(z;\bm{z})}dz, \quad z^{(0)}=\sqrt{\frac{z_1^2}{2}-\xi},
\end{equation}
where $\mathcal{R}_0(z;\bm{z})=(z^2-z_1^2)^{\frac{1}{2}}$ is cut on $(-z_1,z_1)$ and $\mathcal{R}_0(z;\bm{z})=z+\mathcal{O}(z^{-1})$ as $z\to\infty$. Hence, the $g$-function is given by
\begin{equation}\label{ggg0}
	g_0(\xi;z):=\int_{z_1}^zdg_0=(4z^2+12\xi+2z_1^2)(z^2-z_1^2)^{\frac{1}{2}}, 
\end{equation}
which satisfies all properties for $g$-function outlined in this section, where $\mathcal{I}=(-z_1,z_1)$ and $d_0(\xi)=0$. 
\par
Additionally, if $z_1$ is a soft-edge, the genus zero Whitham velocity $v_0$ in (\ref{motion}) is given by
\begin{equation}
	v_0(\bm{z})=-6z_1^2. 
\end{equation}

\subsection{Self-similar $g$-function of genus two for $G=1$}\label{g=1}\

For Riemann surface of genus two, we have $\bm{z}=(-z_3,-z_2, -z_1, z_1, z_2, z_3)$ with $ z_3>z_2>z_1>0$, and the first homology group is $H_1(\mathcal{S}_G)=\{a_1,b_2,a_{-1},b_{-1} \}$. The polynomials in (\ref{poly}) are given by 
\begin{equation}
	\mathcal{P}_{0,1}(z;\bm{z})=z^3+p_{0,1}z,\quad
	\mathcal{P}_{1,1}(z;\bm{z})=z^5-\frac{z_1^2+z_2^2+z_3^2}{2}z^3+p_{1,1}z,  
\end{equation}
where
\begin{equation}
	\begin{aligned}
		p_{0,1}&=-z_1^2-(z_3^2-z_1^2)\frac{E(m)}{K(m)}, \quad m=\frac{z_3^2-z_2^2}{z_3^2-z_1^2}, \\
		p_{1,1}&=\frac{z_1^2z_2^2+z_2^2z_3^2+z_1^2z_3^2}{3}-\frac{z_1^2+z_2^2+z_3^2}{6}\left[z_1^2+(z_3^2-z_1^2)\frac{E(m)}{K(m)}\right]. 
	\end{aligned}
\end{equation}
Here $K(m)=\int_0^{\frac{\pi}{2}}\frac{ds}{\sqrt{1-m^2\sin^2s}}$ and $E(m)=\int_0^{\frac{\pi}{2}}\sqrt{1-m^2\sin^2s} \text{ } ds$ are the complete elliptic integrals of the first and second kind, respectively. Then the differential $dg_1$ is given by
\begin{equation}
	dg_1=12 \frac{z^5+(\xi-\frac{z_1^2+z_2^2+z_3^2}{2})z^3+b z}{\mathcal{R}_1(z;\bm{z})}dz, 
\end{equation}
where  
\begin{equation}
	\begin{aligned}
		b=\frac{z_1^2z_2^2+z_2^2z_3^2+z_1^2z_3^2}{3}-\left(\xi+\frac{z_1^2+z_2^2+z_3^2}{6}\right)\left[z_1^2+(z_3^2-z_1^2)\frac{E(m)}{K(m)}\right], 
	\end{aligned}
\end{equation}
and $\mathcal{R}_1(z;\bm{z})=\left[(z^2-z_1^2)(z^2-z_2^2)(z^2-z_3^2)\right]^{\frac{1}{2}}$ is cut on $(-z_3,-z_2)\cup(-z_1,z_1)\cup(z_2,z_3)$ with $\mathcal{R}_1(z;\bm{z})=z^3+\mathcal{O}(z^{-1})$ as $z\to\infty$. 
Hence, the $g$-function is given by
\begin{equation}\label{ggg1}
	g_1(\xi;z):=\int_{z_1}^zdg_1=\int_{z_1}^z\frac{12s(s^2-z_2^2)(s^2-\mu^2)}{\mathcal{R}_1(s;\bm{z})}ds, 
\end{equation}
where $\mu$ is determined by $\mu^2=\frac{1}{2}(z_1^2-z_2^2+z_3^2)-\xi$. Consequently, the function $g_1(\xi;z)$ satisfies all properties for $g$-function outlined in this section, where $\mathcal{I}=(-z_3,-z_2)\cup(-z_1,z_1)\cup(z_2,z_3)$ and 
\begin{equation}
    d_1(\xi)=\oint_{b_1}dg=:\delta_0(\xi), \quad d_{-1}(\xi)=-\oint_{b_1}dg=-\delta_0(\xi), \quad d_{0}(\xi)=0.
\end{equation}
\par
Additionally, if $z_2$ is a soft-edge, the genus one Whitham velocity $v_2$ in (\ref{motion}) is given by
\begin{equation}\label{v2}
	v_1(\bm{z})=-2(z_1^2+z_2^2+z_3^2)+\frac{4(z_2^2-z_1^2)(z_2^2-z_3^2)}{z_1^2-z_2^2+(z_3^2-z_1^2)\frac{E(m)}{K(m)}}, 
\end{equation}
which will be appeared in Section \ref{section5}. 
\par

\section{Region \RNum{1} for $6q_l^2t-12q_r^2t-M<x<6q_l^2t-12q_r^2t-\varepsilon$}\label{section4}

\hspace{1em} In Region \RNum{1} for $6q_l^2t-12q_r^2t-M<x<6q_l^2t-12q_r^2t-\varepsilon$, the 
leading-order term of large-time asymptotic solution should be similar to the plane wave specified by the left half of the initial data (\ref{initial}). From the perspective of RHP, it means that the $g$-function should be cut on $I_l=(-q_l,q_l)$, that is, let $G=0$ and $z_1=q_l$, then one has
\begin{equation}\label{gl}
	g_l(\xi;z)=(4z^2+12\xi+2q_l^2)\mathcal{R}_l(z)=\int_{q_l}^z\frac{12s(s-z_l)(s+z_l)}{\mathcal{R}_l(s)}ds, 
\end{equation}
where $z_l=\sqrt{\frac{q_l^2}{2}-\xi}\in(q_r,+\infty)$ with $z_l=q_r$ only for $\xi=\frac{q_l^2}{2}-q_r^2$. Furthermore, the signature table of $\Im g_l(\xi;z)$ is illustrated in Figure \ref{imgl}. 
\par

\subsection{The opening of lense}\

Introduce the following transformation 
\begin{equation}\label{M-widetilde}
	\widetilde{\mathcal{M}}_l(x,t;z)=\mathcal{M}(x,t;z)e^{it(g_l(\xi;z)-\theta(x;\xi))\sigma_3}\mathcal{G}_l(x,t;z)\mathcal{D}_l(\xi;z)^{-\sigma_3}, 
\end{equation}
where the matrix-valued function $\mathcal{G}_l(x,t;z)$ is explicitly defined by
\begin{equation}
	\mathcal{G}_{l}(x,t;z)=\begin{cases}
		\begin{aligned}
			&\begin{pmatrix}
				1&0\\r(z)e^{2it g_l(\xi;z)}&1
			\end{pmatrix}, &z\in \Omega_l^{(1)}\cup\Omega_l^{(2)}\\
			&\begin{pmatrix}
				1&-r^*(z)e^{-2it g_l(\xi;z)}\\0&1
			\end{pmatrix}, &z\in \overline{\Omega}_l^{(1)}\cup \overline{\Omega}_l^{(2)}, \\
			&\begin{pmatrix}
				1&0\\ \frac{r(z)}{1-r(z)r^{*}(z)}
				e^{2it g_l(\xi;z)}&1
			\end{pmatrix}, &z\in \Omega_l^{(3)}, \\ 
			&\begin{pmatrix}
				1&-\frac{r^*(z)}{1-r(z)r^{*}(z)}
				e^{-2it g_l(\xi;z)}\\0&1
			\end{pmatrix}, &z\in\overline{\Omega}_l^{(3)},\\
			&\quad I, &elsewhere, 
		\end{aligned}
	\end{cases}
\end{equation}
with the domains $\Omega_l^{(j)}$ and $\overline{\Omega}_l^{(j)}$ ($j=1,2,3$) depicted in Figure \ref{o1}. The scalar function $\mathcal{D}_l(\xi;z)$ is constructed by 
\begin{equation}\label{Dl}
	\mathcal{D}_l(\xi;z):=\exp\left\{\frac{\mathcal{R}_{l+}(z)}{2\pi i}\left[(\int_{q_r}^{z_l}+\int_{-z_l}^{-q_r})\frac{\ln(1-r(s)r^*(s))}{(s-z)\mathcal{R}_{l+}(s)}ds-(\int_{q_l}^{q_r}+\int_{-q_r}^{-q_l})\frac{\ln(a_+(s)a_-^*(s))}{(s-z)\mathcal{R}_{l_+}(s)}ds\right]\right\}, 
\end{equation}
which satisfies the following scalar RHP.  
\par
\begin{prop}
	The function $\mathcal{D}_l(z)=\mathcal{D}_l(\xi;z)$ defined in (\ref{Dl}) exhibits the following characteristics: \\
	(a) $\mathcal{D}_l(z)$ is holomorphic for $z\in\mathbb{C}\backslash[-z_l, z_l]$. \\
	(b) $\mathcal{D}_l(z)$ satisfies the jump conditions:
	\begin{equation}
		\begin{cases}
			\begin{aligned}
				&\mathcal{D}_{l+}(z)=\mathcal{D}_{l-}(z)(1-r(z)r^*(z)),  &z\in(-z_l,-q_r)\cup(q_r,z_l) , \\
				&\mathcal{D}_{l+}(z)=\mathcal{D}_{l-}(z)(a_+(z)a_-^*(z)),&z\in(-q_r,-q_l)\cup(q_l,q_r) ,\\
				&\mathcal{D}_{l+}(z)\mathcal{D}_{l-}(z)=1,&z\in(-q_l,q_l). \\
			\end{aligned}
		\end{cases}
	\end{equation}
	(c) $\mathcal{D}_l(z)$ admits the symmetry $\mathcal{D}_l(-z)=1/\mathcal{D}_l(z)$ for $z\in\mathbb{C}$. \\
	(d) $\mathcal{D}_l(z)=1+\mathcal{O}(z^{-1})$ as $z\to\infty$. 
\end{prop}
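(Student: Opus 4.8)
The plan is to write $\mathcal{D}_l(z)=e^{\Psi(z)}$ with $\Psi(z)=\mathcal{R}_l(z)\,G(z)$, where
\[
G(z)=\frac{1}{2\pi i}\int_{\Sigma}\frac{\phi(s)}{(s-z)\,\mathcal{R}_{l+}(s)}\,ds
\]
is a Cauchy-type integral over the reflection-symmetric contour $\Sigma=(-z_l,-q_r)\cup(-q_r,-q_l)\cup(q_l,q_r)\cup(q_r,z_l)$, and $\phi(s)=\ln(1-r(s)r^*(s))$ on the two band intervals $(q_r,z_l),(-z_l,-q_r)$ while $\phi(s)=-\ln(a_+(s)a_-^*(s))$ on the two gap intervals $(q_l,q_r),(-q_r,-q_l)$. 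Property (a) is then immediate: $G$ is holomorphic off $\Sigma$ and $\mathcal{R}_l$ is holomorphic off its cut $[-q_l,q_l]$, and since $\Sigma\cup[-q_l,q_l]=[-z_l,z_l]$, the product $\Psi$, hence $\mathcal{D}_l=e^{\Psi}$, is holomorphic on $\mathbb{C}\setminus[-z_l,z_l]$. The square-root singularities of $1/\mathcal{R}_{l+}$ at $\pm q_l$ and the logarithmic singularities of $\phi$ at $\pm q_r,\pm z_l$ are integrable, so no further non-analyticity is created off the cut.

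For (b) I would apply the Plemelj--Sokhotski relation $G_+-G_-=\phi/\mathcal{R}_{l+}$ and split into three cases according to which factor of $\Psi$ is discontinuous. On a band or gap interval the point $z$ lies on $\Sigma$ while $\mathcal{R}_l$ is continuous there (these intervals avoid the cut $[-q_l,q_l]$), so $\Psi_+-\Psi_-=\mathcal{R}_l\,(G_+-G_-)=\phi(z)$, and exponentiating yields the multiplicative jump $e^{\phi(z)}$: this is $1-r r^*$ on the bands and the corresponding factor built from $a_+a_-^*$ on the gaps. On $(-q_l,q_l)$, by contrast, $z$ is \emph{not} on $\Sigma$, so $G$ is continuous, whereas the prefactor changes sign across the branch cut of $\mathcal{R}_l$, i.e.\ $\mathcal{R}_{l+}=-\mathcal{R}_{l-}$; hence $\Psi_+=-\Psi_-$ and $\mathcal{D}_{l+}\mathcal{D}_{l-}=e^{\Psi_++\Psi_-}=1$. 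This trichotomy — jump carried by the integral on the bands/gaps versus jump carried by the sign flip of $\mathcal{R}_l$ on the central interval — is the conceptual core of the verification.

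Properties (c) and (d) both hinge on the invariance $\phi(-s)=\phi(s)$ on $\Sigma$, which follows from the symmetries $r(-z)=-r(z)$, $a(-z)=a(z)$ together with the boundary-value relations of Lemma \ref{abr}(b)--(c) (on the gaps one uses $a_+(-s)a_-^*(-s)=a_+(s)a_-^*(s)$). For (c) I would substitute $s\mapsto -s$, using $\mathcal{R}_l(-z)=-\mathcal{R}_l(z)$, $\mathcal{R}_{l+}(-s)=-\mathcal{R}_{l+}(s)$, $-\Sigma=\Sigma$ and the evenness of $\phi$; a short computation gives $G(-z)=G(z)$, whence $\Psi(-z)=-\Psi(z)$ and $\mathcal{D}_l(-z)=\mathcal{D}_l(z)^{-1}$. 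For (d) I would expand $\mathcal{R}_l(z)=z+\mathcal{O}(z^{-1})$ and $1/(s-z)=-z^{-1}+\mathcal{O}(z^{-2})$, obtaining $\Psi(z)=-\frac{1}{2\pi i}\int_\Sigma \frac{\phi(s)}{\mathcal{R}_{l+}(s)}\,ds+\mathcal{O}(z^{-1})$; the apparent $\mathcal{O}(1)$ term is the integral of the \emph{odd} integrand $\phi/\mathcal{R}_{l+}$ (even numerator over the odd $\mathcal{R}_{l+}$) over the symmetric contour $\Sigma$, hence it vanishes, giving $\mathcal{D}_l(z)=1+\mathcal{O}(z^{-1})$.

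The main obstacle — and the step that genuinely exploits the odd-symmetric design of this construction rather than generic Cauchy-integral estimates — is the normalization at infinity in (d): one must recognize that the leading constant of $\Psi$ is precisely the integral of an odd function over a reflection-symmetric contour and therefore vanishes identically. The only remaining delicate point is the branch/orientation bookkeeping in (b), namely tracking $\mathcal{R}_{l\pm}$ against the orientation of $\Sigma$ and the boundary values $a_\pm,r_\pm$ furnished by Lemma \ref{abr}(c); this becomes routine once the trichotomy above is in place.
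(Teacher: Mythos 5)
Your argument is correct and is essentially the paper's own proof written out in full: the paper likewise disposes of (a) by general properties of Cauchy-type integrals, of (b) by Plemelj's formula together with $\mathcal{R}_{l+}(z)+\mathcal{R}_{l-}(z)=0$ on $(-q_l,q_l)$, of (c) by the substitution $s\mapsto -s$ combined with the symmetries of $r$, $a$ and $\mathcal{R}_{l+}$, and of (d) by exactly your observation that the limiting constant $\exp\bigl\{-\tfrac{1}{2\pi i}\int_{\Sigma}\phi(s)\,\mathcal{R}_{l+}(s)^{-1}\,ds\bigr\}$ equals $1$ because the integrand is odd (even $\phi$ over the odd $\mathcal{R}_{l+}$) on the reflection-symmetric contour.

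One point you should not leave vague. With your density $\phi=-\ln(a_+a_-^*)$ on the gaps --- which is what definition (\ref{Dl}) actually prescribes, note the minus sign in front of the gap integrals --- Plemelj gives
\begin{equation*}
\mathcal{D}_{l+}(z)=\mathcal{D}_{l-}(z)\,\bigl(a_+(z)a_-^*(z)\bigr)^{-1},\qquad z\in(-q_r,-q_l)\cup(q_l,q_r),
\end{equation*}
\emph{not} $\mathcal{D}_{l-}(a_+a_-^*)$ as the second line of the Proposition states; your phrase ``the corresponding factor built from $a_+a_-^*$'' papers over this mismatch rather than resolving it. The inverse is in fact the jump that is needed downstream: in the transformation (\ref{M-widetilde}) the gap factorization carries the middle diagonal factor $(a_+(z)a_-^*(z))^{-\sigma_3}$, and only the jump $\mathcal{D}_{l+}=\mathcal{D}_{l-}(a_+a_-^*)^{-1}$ cancels it, which is why no jump survives on $(q_l,q_r)\cup(-q_r,-q_l)$ in (\ref{j1}). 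So the sign error sits in the statement of (b) (a typo in the paper), and your computation, once you commit to the explicit exponent, produces the correct and usable jump; everything else in your write-up checks out against the paper's (much terser) proof.
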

\begin{proof}
	(a) follows from the general properties of Cauchy type integrals. (b) is a consequence of the Plemelj's formula and the fact that $\mathcal{R}_{l+}(z)+\mathcal{R}_{l-}(z)=0$ for $z\in(-q_l,q_l)$. (c) follows from the change of variable $s\mapsto-s$ in integration, the symmetry of $r(z)$ and $a(z)$ shown in Lemma \ref{abr}, and the fact that $\mathcal{R}_{l+}(-z)=-\mathcal{R}_{l+}(z)$  for $z\in\mathbb{C}\backslash[-q_l,q_l]$. Regarding property (d), the limit of  $\mathcal{D}_l(\xi;z)$ at $z=\infty$ is given by
	\begin{equation}
		\exp\left\{\frac{-1}{2\pi i}\left[(\int_{q_r}^{z_l}+\int_{-z_l}^{-q_r})\frac{\ln(1-r(s)r^*(s))}{\mathcal{R}_{l_+}(s)}ds-(\int_{q_l}^{q_r}+\int_{-q_r}^{-q_l})\frac{\ln(a_+(s)a_-^*(s))}{\mathcal{R}_{l_+}(s)}ds\right]\right\}, 
	\end{equation}
	which reduces to 1 due to the symmetry of $r(z)$, $a(z)$ and $\mathcal{R}_{l+}(z)$.   
\end{proof}
\begin{figure}[ht]
	\begin{minipage}[b]{0.42\textwidth}  
		\centering
		\includegraphics[width=\linewidth]{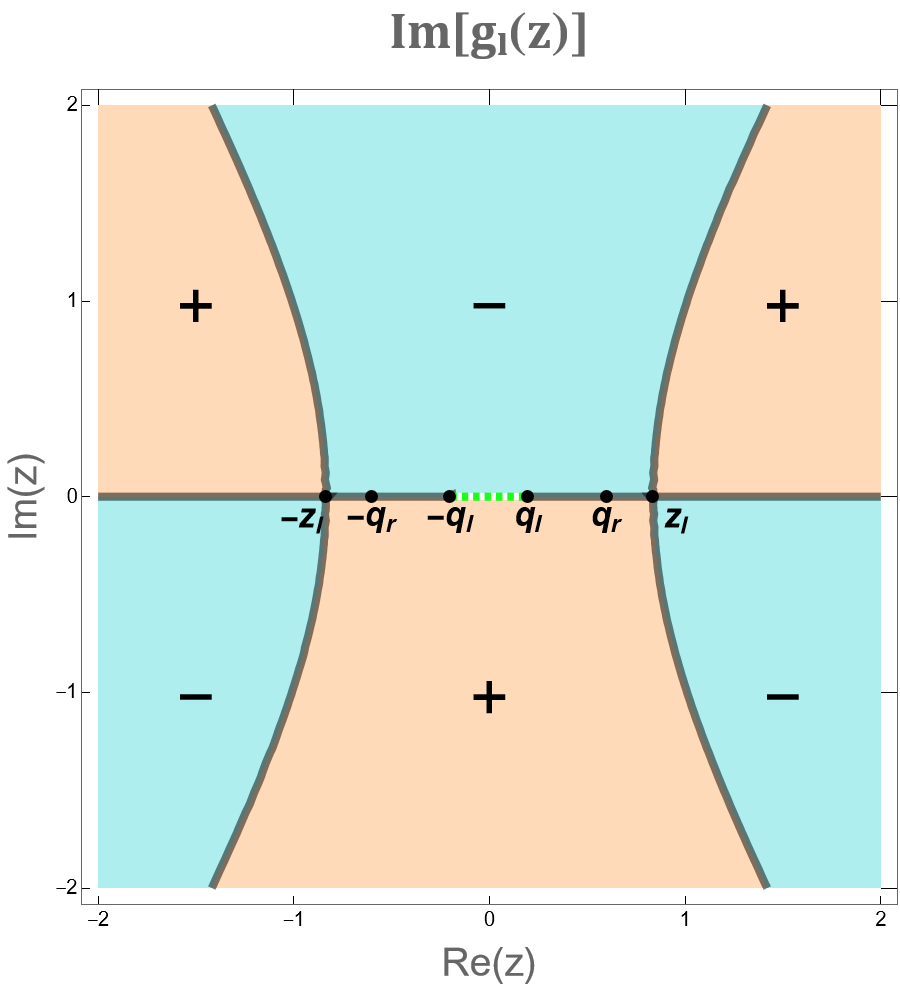}  
		\caption{The signs of $\Im g_l(\xi;z)$}
		\label{imgl}
	\end{minipage}
	\begin{minipage}[b]{0.56\textwidth}  
		\centering
		\begin{tikzpicture}[scale=1]
			\draw[line width=1pt,dashed, draw=gray] (-3.5,0) -- (3.5,0) node[pos=1, below, font=\small]{$\mathbb{R}$};
			\draw[line width=1pt,-{Stealth[length=2mm, width=1.5mm]}] (-0.45,0) -- (0.1,0) node[pos=-2,above=1mm, font=\small] {$-z_l$}  node[pos=0, below=-0.5mm, font=\small] {$-q_l$};
			\draw[line width=1pt] (-0.1,0) -- (0.45,0) node[pos=1, below, font=\small] {$q_l$} node[pos=3,above=1mm, font=\small] {$z_l$};
			\draw[line width=1pt,-{Stealth[length=2mm, width=1.5mm]}] (0,-1.5) -- (0.8,-0.7) node[pos=1, below=2mm, font=\small, red!80!white]{$\overline{\Gamma}_l^{(2)}$};
			\draw[line width=1pt,-{Stealth[length=2mm, width=1.5mm]}] (0.7,-0.8) -- (2.5,1); 
			\draw[line width=1pt] (2.4,0.9) -- (3.5,2) node[pos=0.5, below=2mm, font=\small, red!80!white]{$\Gamma_l^{(1)}$}; 
			\draw[line width=1pt,-{Stealth[length=2mm, width=1.5mm]}] (0,1.5) -- (0.8,0.7) node[pos=1, above=2mm, font=\small, red!80!white]{$\Gamma_l^{(2)}$};
			\draw[line width=1pt,-{Stealth[length=2mm, width=1.5mm]}] (0.7,0.8) -- (2.5,-1); 
			\draw[line width=1pt] (2.4,-0.9) -- (3.5,-2) node[pos=0.5, above=2mm, font=\small, red!80!white]{$\overline{\Gamma}_l^{(1)}$}; 
			\draw[line width=1pt,-{Stealth[length=2mm, width=1.5mm]}] (-3.5,-2) -- (-2.4,-0.9) node[pos=0.7, above=0.5mm, font=\small, red!80!white]{$\overline{\Gamma}_l^{(4)}$};
			\draw[line width=1pt,-{Stealth[length=2mm, width=1.5mm]}] (-2.5,-1) -- (-0.7,0.8); 
			\draw[line width=1pt] (-0.8,0.7) -- (0,1.5) node[pos=0.1, above, font=\small, red!80!white]{$\Gamma_l^{(3)}$}; 
			\draw[line width=1pt,-{Stealth[length=2mm, width=1.5mm]}] (-3.5,2) -- (-2.4,0.9) node[pos=0.5, below=1mm, font=\small, red!80!white]{$\overline{\Gamma}_l^{(4)}$};
			\draw[line width=1pt,-{Stealth[length=2mm, width=1.5mm]}] (-2.5,1) -- (-0.7,-0.8); 
			\draw[line width=1pt] (-0.8,-0.7) -- (0,-1.5) node[pos=0.1, below=1.5mm, font=\small, red!80!white]{$\overline{\Gamma}_l^{(3)}$}; 
			\draw[fill=black, draw=black, line width=0.05pt] (0.45,0) circle (0.03cm);
			\draw[fill=black, draw=black, line width=0.05pt] (0.95,0) circle (0.03cm) node[below, font=\small] {$q_r$};
			\draw[fill=black, draw=black, line width=0.05pt] (-0.45,0) circle (0.03cm);
			\draw[fill=black, draw=black, line width=0.05pt] (-0.95,0) circle (0.03cm) node[below=-0.5mm, font=\small] {$-q_r$};
			\path (-4.5,-3.5) rectangle (4,3.5);
			\node[font=\small,blue!40!black] at (0,0.6) {$\Omega_l^{(2)}$};
			\node[font=\small,orange!40!black] at (2.5,0.3) {$\Omega_l^{(1)}$};
			\node[font=\small,orange!40!black] at (-2.5,0.3) {$\Omega_l^{(3)}$};
			\node[font=\small,orange!40!black] at (0,-0.6) {$\overline{\Omega}_l^{(2)}$};
			\node[font=\small,blue!40!black] at (2.5,-0.3) {$\overline{\Omega}_l^{(1)}$};
			\node[font=\small,blue!40!black] at (-2.5,-0.3) {$\overline{\Omega}_l^{(3)}$};
			\draw (1.8,0) arc (0:45:0.3) node[pos=0.8,right] {$\frac{\pi}{4}$};
			\draw (1.3,0.2) arc (135:180:0.3) node[pos=0.1,left] {$\frac{\pi}{4}$};
		\end{tikzpicture}
		\caption{The opening of lenses in Region \RNum{1}}
		\label{o1}
	\end{minipage}
\end{figure}
\par
Consequently, the function $\widetilde{\mathcal{M}}_l(x,t;z)$ in (\ref{M-widetilde}) satisfies the following RHP. 
\begin{rhp}
	Find a $2\times2$ matrix-valued function $\widetilde{\mathcal{M}}_l(z)=\widetilde{\mathcal{M}}_l(x,t;z)$ satisfying: \\
	(1) $\widetilde{\mathcal{M}}_l(z)$ is holomorphic for $z\in\mathbb{C}\backslash\left(\bigcup_{j=1}^4(\Gamma_l^{(j)}\cup\overline{\Gamma}_l^{(j)})\cup [-q_l,q_l]\right)$. \\
	(2) For $z\in\bigcup_{j=1}^4(\Gamma_l^{(j)}\cup\overline{\Gamma}_l^{(j)})\cup (-q_l,q_l)$ in Figure \ref{o1}, the function $\widetilde{\mathcal{M}}_l(x,t;z)$ takes continuous boundary values $\widetilde{\mathcal{M}}_{l\pm}(x,t;z)$ which satisfy the jump relation 
	\begin{equation}
		\widetilde{\mathcal{M}}_{l+}(x,t;z)=\widetilde{\mathcal{M}}_{l-}(x,t;z)\widetilde{\mathcal{J}}_l(x,t;z),
	\end{equation}
	where
	\begin{equation}\label{j1}
		\widetilde{\mathcal{J}}_l(x,t;z)=\begin{cases}
			\begin{aligned}
				&\begin{pmatrix}
					1&0\\\mathcal{D}_l^{-2}re^{2itg_l}&1
				\end{pmatrix}, &z\in\Gamma_l^{(1)}\cup\Gamma_l^{(4)},\\
				&\begin{pmatrix}
					1&-\mathcal{D}_l^2 r^*e^{-2itg_l}\\0&1
				\end{pmatrix},&z\in\overline{\Gamma}_l^{(1)}\cup\overline{\Gamma}_l^{(4)}, \\
				&\begin{pmatrix}
					1&-\mathcal{D}_l^2\frac{r^*}{1-rr^*}e^{-2itg_l}\\0&1
				\end{pmatrix},&z\in\Gamma_l^{(2)}\cup\Gamma_l^{(3)},\\
				&\begin{pmatrix}
					1&0\\\mathcal{D}_l^{-2}\frac{r}{1-rr^*}e^{2itg_l}&1
				\end{pmatrix},&z\in\overline{\Gamma}_l^{(2)}\cup\overline{\Gamma}_l^{(3)},\\
				& -i\sigma_2, &z\in(-q_l,q_l). 
			\end{aligned}
		\end{cases}
	\end{equation}
	(3) $\widetilde{\mathcal{M}}_l(z)=I+\mathcal{O}(z^{-1})$, as $z\to\infty$ in $\mathbb{C}\backslash\bigcup_{j=1}^4(\Gamma_l^{(j)}\cup\overline{\Gamma}_l^{(j)})$ . \\
	(4) Symmetries: 
	\begin{equation}
		\widetilde{\mathcal{M}}_l(z)=\overline{\widetilde{\mathcal{M}}_l(-\bar{z})}=\sigma_1\widetilde{\mathcal{M}}_l(-z)\sigma_1=\sigma_1\overline{\widetilde{\mathcal{M}}_l(\bar{z})}\sigma_1. 
	\end{equation}
\end{rhp}
\par

\subsection{The outer parametrix for $\mathcal{M}_l^{(\infty)}(z)$}\

As $t$ large enough, the jump matrix $\widetilde{\mathcal{J}}_{l}(x,t;z)$ approaches to
\begin{equation}
	\mathcal{J}_{l}^{(\infty)}=\begin{cases}
		\begin{aligned}
			&-i\sigma_2, &z \in (-q_l,q_l), \\
			&\quad I, &elsewhere, 
		\end{aligned}
	\end{cases}
\end{equation}
so it is natural to establish the following parametrix for $\mathcal{M}_l^{(\infty)}(z)=\mathcal{M}_{l}^{(\infty)}(x,t;z)$. 
\par
\begin{rhp}\label{rhp_l}
	Find a $2\times2$ complex matrix-valued function $\mathcal{M}_{l}^{(\infty)}(z)=\mathcal{M}_{l}^{(\infty)}(x,t;z)$ that satisfies: \\
	(1) $\mathcal{M}_{l}^{(\infty)}(z)$ is holomorphic for $z\in\mathbb{C}\backslash[-q_l,q_l]$.  \\
	(2) For $z\in(-q_l,q_l)$, the jump condition below holds
    \begin{equation}
		\mathcal{M}_{l+}^{(\infty)}(z)=\mathcal{M}_{l-}^{(\infty)}(z)\cdot (-i\sigma_2). 
	\end{equation}
	(3) $\mathcal{M}_{l}^{(\infty)}(z)=I+\mathcal{O}(z^{-1})$, as $z\to\infty$ in $\mathbb{C}\backslash[-q_l,q_l]$. \\
\end{rhp}
It is easy to show that the unique solution of RHP \ref{rhp_l} is $\mathcal{M}_{l}^{(\infty)}(z)=\mathcal{E}_l(z)$, where $\mathcal{E}_l(z)$ is defined by (\ref{epsilon}). 
\par

\subsection{Small norm RHP for $\mathcal{M}_l^{(err)}(x,t;z)$ }\

Let $\varepsilon'>0$ be sufficiently small such that the pair of closed disks
\begin{equation}
	\mathcal{U}_l^{\pm} := \{ z \in \mathbb{C} \ \Big| \ |z \mp z_l| \le \varepsilon' \},
\end{equation}
centered at $\pm z_l$ with radius $\varepsilon$, are disjoint from $\mathcal{I}_l$, i.e., $\mathcal{U}_l^{\pm}\cap \mathcal{I}_l=\varnothing$. Denote their positively oriented boundaries by $\partial \mathcal{U}_l^{\pm}$, where the positive orientation is counterclockwise. Define a small-norm RHP as 
\begin{equation}\label{err_l}
	\mathcal{M}_l^{(err)}(x,t;z):=\widetilde{\mathcal{M}}_l(x,t;z) \left[\mathcal{P}^{(0)}(x,t;z)\right]^{-1},
\end{equation}
where $\mathcal{P}^{(0)}(x,t;z)$ coincides with $\widetilde{\mathcal{M}}_l(x,t;z)$ on $\mathcal{U}_l^{+}\cup \mathcal{U}_l^{-}$ and degenerates to $\mathcal{E}_l(z)$ elsewhere. Consequently, the jump matrix $\mathcal{J}_l^{(err)}(x,t;z)$ on the contour shown in Figure \ref{e1} takes the form 
\begin{equation}
	\mathcal{J}_l^{(err)}(x,t;z)=\begin{cases}
		\begin{aligned}
			& \mathcal{E}_l(z)\widetilde{\mathcal{J}}_l(x,t;z)\mathcal{E}_l(z)^{-1}, &z\in \left(\bigcup_{j=1}^4\Gamma_l^{(j)}\cup\overline{\Gamma}_l^{(j)}\right)\backslash \left(\mathcal{U}_l^{+}\cup \mathcal{U}_l^{-}\right), \\
			& \mathcal{P}^{(0)}(x,t;z)\mathcal{E}_l(z)^{-1}, &z\in\partial \mathcal{U}_l^{+}\cup\partial\mathcal{U}_l^{-}. 
		\end{aligned}
	\end{cases}
\end{equation}
\par
Let $\mathcal{M}_{l,1}^{(err)}(x,t)$ denote the coefficient of the $z^{-1}$-term in the Laurent expansion of $\mathcal{M}_l^{(err)}(x,t;z)$ at infinity. The reconstruction formula (\ref{construct}) yields the decomposition
\begin{equation}
	q(x,t)=2i \lim_{z\to\infty}(z\widetilde{\mathcal{M}}_l(x,t;z))_{12}=q_l+2i\left(\mathcal{M}_{l,1}^{(err)}(x,t)\right)_{12}. 
\end{equation}
The remaining task reduces to asymptotic analysis of the coefficient $\mathcal{M}_{l,1}^{(err)}(x,t)$, which can be achieved via the Deift-Zhou nonlinear steepest descent method \cite{DZ1993}.
\par
The procedure initiates with the construction of suitable local conformal mappings for $\mathcal{P}^{(1)}(x,t;z)=\mathcal{E}_l(z)^{-1}\mathcal{P}^{(0)}(x,t;z)$, which inherits identical jump conditions to $\widetilde{\mathcal{M}}_l(x,t;z)$ within $\partial \mathcal{U}_l^{+}\cup\partial\mathcal{U}_l^{-}$, while it reduces to the identity matrix elsewhere. 
\par
Building upon the definition of $ z_l $ and the analyticity of $ g_l(z) $, combined with the symmetry relation $ g_l(-z_l) = -g_l(z_l) $, the function $ g_l(z) $ admits a Taylor series expansion about $ \pm z_l $ (specifically within the neighborhoods $ \mathcal{\mathcal{U}}_l^{\pm} $):  
\begin{equation}\label{eq:taylor_expansion}  
	g_l(z) = \pm g_l(z_l) \pm \frac{g_l''(z_l)}{2} (z \mp z_l)^2 + \sum_{n=3}^{\infty} \frac{g_l^{(n)}(\pm z_l)}{n!} (z \mp z_l)^n,  
\end{equation}  
where $ g_l^{(n)}(z) $ denotes the $n$-th derivative of $ g_l(z) $. Crucially, the leading-order coefficients exhibit the following properties:
\begin{equation}  
	g_l(z_l) = -8 \left( -\xi - \frac{q_l^2}{2} \right)^{3/2} < 0, \quad g_l''(z_l) = \frac{24 \left( -\xi + \frac{q_l^2}{2} \right)}{\sqrt{ -\xi - \frac{q_l^2}{2} }} > 0.  
\end{equation}
\par
This analytical structure motivates the construction of local conformal mappings $z\mapsto\zeta_k$ for $k\in\{r,l\}$ (see Figure \ref{cl}): 
\begin{equation}
	g_l(z)=g_l(z_l)+\frac{1}{4t}\zeta_r^2,\quad z\in \mathcal{U}_l^{+}, \qquad \quad g_l(z)=-g_l(z_l)-\frac{1}{4t}\zeta_l^2,\quad z\in \mathcal{U}_l^{-}. 
\end{equation}
Leveraging the expansion (\ref{eq:taylor_expansion}), it is systematically derived that
\begin{equation}\label{sqrt_t}
	\begin{aligned}
		\zeta_r&=\sqrt{2tg''(z_l)}(z-z_l)\left(1+\sum_{n=1}^{\infty}\frac{g_l^{(n+2)}(z_l)}{n!} (z-z_l)^n\right)^{1/2}, & z\in \mathcal{U}_l^{\pm}, \\
		\zeta_l&=-\sqrt{2tg''(z_l)}(z+z_l)\left(1-\sum_{n=1}^{\infty}\frac{g_l^{(n+2)}(- z_l)}{n!} (z + z_l)^n\right)^{1/2},& z\in \mathcal{U}_l^{-}. \\
	\end{aligned}
\end{equation}
Here, the variable $\zeta_l$ undergoes a $\pi$-radian phase rotation relative to the $(z+z_l)$. Furthermore, the inverse series representations can also be obtained 
\begin{equation}
	\begin{aligned}
		&z-z_l=\frac{\zeta_r}{\sqrt{2tg''(z_l)}}+\sum_{n=2}^{\infty} f_{r,n}\left(\frac{\zeta_r}{\sqrt{2tg''(z_l)}}\right)^n, &z\in\mathcal{U}_l^+, \\
		&z+z_l=-\frac{\zeta_l}{\sqrt{2tg''(z_l)}}+\sum_{n=2}^{\infty} f_{l,n}\left(-\frac{\zeta_l}{\sqrt{2tg''(z_l)}}\right)^n, &z\in\mathcal{U}_l^-, 
	\end{aligned}
\end{equation}
where the coefficients $f_{l,n}$ and $f_{r,n}$ ($n\ge 2$) are recursively determined from the known derivatives $g_l^{(n)}(\pm z_l)$. 
\begin{figure}[ht]
	\begin{minipage}[b]{0.49\textwidth}  
		\centering
		\begin{tikzpicture}[scale=1]
			\draw[line width=1pt] (2,1.5) circle (1cm) node[above=5mm, font=\small] {$\mathcal{U}^{+}$}; 
			\draw[line width=0.01pt,-{Stealth[length=2mm, width=1.5mm]}](1.92,2.5) -- (1.91,2.5);
			\node[font=\small] at (2,1.3) {$z_l$};
			\draw[line width=1pt,-{Stealth[length=2mm, width=1.5mm]}] (1.3,0.8) -- (1.8,1.3);
			\draw[line width=1pt,-{Stealth[length=2mm, width=1.5mm]}] (1.7,1.2) -- (2.5,2); 
			\draw[line width=1pt] (2,1.5) -- (2.7,2.2);
			\draw[line width=1pt,-{Stealth[length=2mm, width=1.5mm]}] (1.3,2.2) -- (1.8,1.7);
			\draw[line width=1pt,-{Stealth[length=2mm, width=1.5mm]}] (1.7,1.8) -- (2.5,1); 
			\draw[line width=1pt] (2,1.5) -- (2.7,0.8);
			\draw[line width=1pt] (-2,1.5) circle (1cm) node[above=5mm, font=\small] {$\zeta_r(\mathcal{U}^{+})$}; 
			\draw[line width=0.01pt,-{Stealth[length=2mm, width=1.5mm]}](-2.08,2.5) -- (-2.09,2.5);
			\node[font=\small] at (-2,1.2) {$O$};
			\draw[line width=1pt,-{Stealth[length=2mm, width=1.5mm]}] (-2.7,2.2) -- (-2.2,1.7);
			\draw[line width=1pt,-{Stealth[length=2mm, width=1.5mm]}] (-2.3,1.8) -- (-1.5,1); 
			\draw[line width=1pt] (-1.6,1.1) -- (-1.3,0.8);
			\draw[line width=1pt,-{Stealth[length=2mm, width=1.5mm]}] (-2.7,0.8) -- (-2.2,1.3);
			\draw[line width=1pt,-{Stealth[length=2mm, width=1.5mm]}] (-2.3,1.2) -- (-1.5,2); 
			\draw[line width=1pt] (-1.6,1.9) -- (-1.3,2.2);
			\draw[line width=1pt] (-2,-1.5) circle (1cm) node[above=5mm, font=\small] {$\mathcal{U}^{-}$}; 
			\draw[line width=0.01pt,-{Stealth[length=2mm, width=1.5mm]}](-2.08,-0.5) -- (-2.09,-0.5);
			\node[font=\small] at (-2,-1.8) {$-z_l$};
			\draw[line width=1pt,-{Stealth[length=2mm, width=1.5mm]}] (-2.7,-2.2) -- (-2.2,-1.7);
			\draw[line width=1pt,-{Stealth[length=2mm, width=1.5mm]}] (-2.3,-1.8) -- (-1.5,-1); 
			\draw[line width=1pt] (-1.6,-1.1) -- (-1.3,-0.8);
			\draw[line width=1pt,-{Stealth[length=2mm, width=1.5mm]}] (-2.7,-0.8) -- (-2.2,-1.3);
			\draw[line width=1pt,-{Stealth[length=2mm, width=1.5mm]}] (-2.3,-1.2) -- (-1.5,-2); 
			\draw[line width=1pt] (-1.6,-1.9) -- (-1.3,-2.2);
			\draw[line width=1pt] (2,-1.5) circle (1cm) node[above=5mm, font=\small] {$\zeta_l(\mathcal{U}^{-})$}; 
			\draw[line width=0.01pt,-{Stealth[length=2mm, width=1.5mm]}](2.08,-2.5) -- (2.09,-2.5);
			\node[font=\small] at (2,-1.8) {$O$};
			\draw[line width=1pt,-{Stealth[length=2mm, width=1.5mm]}] (2.7,-2.2) -- (2.2,-1.7);
			\draw[line width=1pt,-{Stealth[length=2mm, width=1.5mm]}] (2.3,-1.8) -- (1.5,-1); 
			\draw[line width=1pt] (1.6,-1.1) -- (1.3,-0.8);
			\draw[line width=1pt,-{Stealth[length=2mm, width=1.5mm]}] (2.7,-0.8) -- (2.2,-1.3);
			\draw[line width=1pt,-{Stealth[length=2mm, width=1.5mm]}] (2.3,-1.2) -- (1.5,-2); 
			\draw[line width=1pt] (1.6,-1.9) -- (1.3,-2.2);
			\draw[line width=1pt,-{Stealth[length=3mm, width=2mm]}] (0.8,1.5) -- (-0.8,1.5) node[pos=0.5,above] {$\zeta_r$};
			\draw[line width=1pt,-{Stealth[length=3mm, width=2mm]}] (-0.8,-1.5) -- (0.8,-1.5) node[pos=0.5,above] {$\zeta_l$};
		\end{tikzpicture}
		\caption{Local conformal mappings}
		\label{cl}
	\end{minipage}
	\begin{minipage}[b]{0.49\textwidth}  
		\centering
		\begin{tikzpicture}[scale=1]
			\draw[line width=1pt,-{Stealth[length=2mm, width=1.5mm]}] (0,-1.5) -- (0.8,-0.7) node[pos=1.2, below=5mm, font=\small, red!80!white]{$\overline{\Gamma}_l^{(2)}\backslash\mathcal{U}^{+}$};
			\draw[line width=1pt,-{Stealth[length=2mm, width=1.5mm]}] (0.7,-0.8) -- (2.5,1); 
			\draw[line width=1pt] (2.4,0.9) -- (3.5,2) node[pos=1, below=5mm, font=\small, red!80!white]{$\Gamma_l^{(1)}\backslash\mathcal{U}^{+}$}; 
			\draw[line width=1pt,-{Stealth[length=2mm, width=1.5mm]}] (0,1.5) -- (0.8,0.7) node[pos=1.2, above=5mm, font=\small, red!80!white]{$\Gamma_l^{(2)}\backslash\mathcal{U}^{+}$};
			\draw[line width=1pt,-{Stealth[length=2mm, width=1.5mm]}] (0.7,0.8) -- (2.5,-1); 
			\draw[line width=1pt] (2.4,-0.9) -- (3.5,-2) node[pos=1, above=5mm, font=\small, red!80!white]{$\overline{\Gamma}_l^{(1)}\backslash\mathcal{U}^{+}$}; 
			\draw[line width=1pt,-{Stealth[length=2mm, width=1.5mm]}] (-3.5,-2) -- (-2.4,-0.9) node[pos=0.5, above=3mm, font=\small, red!80!white]{$\overline{\Gamma}_l^{(4)}\backslash\mathcal{U}^{-}$};
			\draw[line width=1pt,-{Stealth[length=2mm, width=1.5mm]}] (-2.5,-1) -- (-0.7,0.8); 
			\draw[line width=1pt] (-0.8,0.7) -- (0,1.5) node[pos=0, above=3mm, font=\small, red!80!white]{$\Gamma_l^{(3)}\backslash\mathcal{U}^{-}$}; 
			\draw[line width=1pt,-{Stealth[length=2mm, width=1.5mm]}] (-3.5,2) -- (-2.4,0.9) node[pos=0.5, below=3.5mm, font=\small, red!80!white]{$\Gamma_l^{(4)}\backslash\mathcal{U}^{-}$};
			\draw[line width=1pt,-{Stealth[length=2mm, width=1.5mm]}] (-2.5,1) -- (-0.7,-0.8); 
			\draw[line width=1pt] (-0.8,-0.7) -- (0,-1.5) node[pos=0, below=3.5mm, font=\small, red!80!white]{$\overline{\Gamma}_l^{(3)}\backslash\mathcal{U}^{-}$}; 
			\draw[fill=gray!40!white, draw=white, line width=0.05pt] (1.5,0) circle (0.4cm);
			\draw[fill=gray!40!white, draw=white, line width=0.05pt] (-1.5,0) circle (0.4cm);
			\draw[line width=1pt,dashed, draw=gray] (-3.5,0) -- (3.5,0) node[pos=1, below, font=\small]{$\mathbb{R}$};
			\draw[line width=1pt,-{Stealth[length=2mm, width=1.5mm]}] (-0.5,0) -- (0.1,0);
			\draw[line width=1pt] (-0.1,0) -- (0.5,0);
			\draw[fill=black, draw=black, line width=0.05pt] (0.5,0) circle (0.03cm);
			\draw[fill=black, draw=black, line width=0.05pt] (0.9,0) circle (0.03cm);
			\draw[fill=black, draw=black, line width=0.05pt] (-0.5,0) circle (0.03cm);
			\draw[fill=black, draw=black, line width=0.05pt] (-0.9,0) circle (0.03cm);
			\node[font=\small] at (-0.9,-0.15) {$-q_r$};
			\node[font=\small] at (-0.5,-0.15) {$-q_l$}; 
			\node[font=\small] at (0.9,-0.2) {$q_r$};
			\node[font=\small] at (0.5,-0.2) {$q_l$};
			\draw[line width=1pt] (1.5,0) circle (0.4cm) node[above=5mm, font=\small] {$\partial \mathcal{U}^{+}$}; 
			\draw[line width=0.01pt,-{Stealth[length=2mm, width=1.5mm]}](1.42,0.4) -- (1.41,0.4);
			\draw[line width=1pt] (-1.5,0) circle (0.4cm) node[above=5mm, font=\small] {$\partial \mathcal{U}^{-}$};
			\draw[line width=0.01pt,-{Stealth[length=2mm, width=1.5mm]}](-1.61,0.4) -- (-1.62,0.4);
			\draw[fill=black, draw=black, line width=0.05pt] (-1.5,0) circle (0.03cm);
			\draw[fill=black, draw=black, line width=0.05pt] (1.5,0) circle (0.03cm);
			\node[font=\small] at (1.5,-0.2) {$z_l$}; 
			\node[font=\small] at (-1.5,-0.15) {$-z_l$}; 
			\path (-4.5,-3) rectangle (4,3);
		\end{tikzpicture}
		\caption{The jump contour of $\mathcal{M}_l^{(err)}(z)$}
		\label{e1}
	\end{minipage}
\end{figure}
\par
Additionally, we investigate the asymptotic behavior of $\mathcal{D}_l(z)$ in the vicinity of $\pm z_l$. The singular structure of $\mathcal{D}_l(z)$ is characterized by the essential factor $\left[(z-z_l)/(z+z_l)\right]^{i\nu}$, with the critical exponent $\nu=-(2\pi)^{-1}\ln \left(1-|r(z_l)|^2\right)$. This motivates the introduction of a meromorphic function $\chi(z)$ through the factorization:
\begin{equation}\label{chi}
	\mathcal{D}_l(z)=:\left(\frac{z-z_l}{z+z_l}\right)^{i\nu}{\chi(z)}
\end{equation}
where $\chi(z)$ constitutes a meromorphic function in $\mathbb{C}\backslash[-z_l,z_l]$ admitting well-defined, nonzero limits at $\pm z_l$. The complete derivation of these properties is detailed in Ref. \cite{Muskhelishvili1958}. 
\par
We now establish the asymptotic factorizations below
\begin{equation}
	\mathcal{D}_l(z(\zeta_r))e^{-itg(z(\zeta_r))}=\mathcal{D}_0\cdot D_1(\zeta_r),\qquad \mathcal{D}_l
	(z(\zeta_l))e^{-itg(z(\zeta_l))}=\mathcal{D}_0^{-1}\cdot\mathcal{D}_2(\zeta_l),  
\end{equation}
where the components are explicitly given by
\begin{equation}
	\begin{aligned}
		&\mathcal{D}_0=\left[2tg''(z_l)\right]^{-i\nu/2}\cdot \left(2z_l\right)^{-i\nu}\cdot\chi(z_l)\cdot\exp\left[-itg(z_l)\right],\qquad \qquad |\mathcal{D}_0|=1,  \\
		&\mathcal{D}_1(\zeta_r)=\zeta_r^{i\nu}\cdot\chi(z(\zeta_r))\cdot\chi(z_l)^{-1}\exp\left[-\frac{i\zeta_r}{4}+i\nu\ln\left(1+\sum_{n=2}^{\infty} f_{r,n}\left(\frac{\zeta_r}{\sqrt{2tg''(z_l)}}\right)^{n-1}\right)\right. \\&\qquad\qquad\qquad\qquad\qquad\qquad\qquad\qquad\left. -i\nu\ln\left(1+\frac{\zeta_r}{2z_l\sqrt{2tg''(z_l)}}+\sum_{n=2}^{\infty} \frac{f_{r,n}}{2z_l}\left(\frac{\zeta_r}{\sqrt{2tg''(z_l)}}\right)^n\right) \right],\\
		&\mathcal{D}_2(\zeta_l)=\zeta_l^{-i\nu}\cdot\chi(z(\zeta_l))\cdot\chi(z_l)\cdot\exp\left[\frac{i\zeta_l}{4}-i\nu\ln\left(1-\sum_{n=2}^{\infty} f_{l,n}\left(-\frac{\zeta_l}{\sqrt{2tg''(z_l)}}\right)^{n-1}\right)\right. \\&\qquad\qquad\qquad\qquad\qquad\qquad\qquad\qquad\left. +i\nu\ln\left(1+\frac{\zeta_l}{2z_l\sqrt{2tg''(z_l)}}-\sum_{n=2}^{\infty} \frac{f_{l,n}}{2z_l}\left(-\frac{\zeta_l}{\sqrt{2tg''(z_l)}}\right)^n\right) \right]. 
	\end{aligned}
\end{equation}
In the asymptotic regime $t\to\infty$, the following uniform convergence holds
\begin{equation}
	r(z(\zeta_r))\to r(z_r), \quad \mathcal{D}_1(\zeta_r)\to\zeta_r^{i\nu}e^{-i\zeta_r/4}, \quad
	r(z(\zeta_l))\to r(z_l), \quad \mathcal{D}_2(\zeta_r)\to\zeta_l^{-i\nu}e^{i\zeta_r/4}. 
\end{equation} 
\par
Let $\mathcal{P}_{1}^{(r)}$ and $\mathcal{P}_{1}^{(l)}$ denote the coefficients of the $\zeta_r^{-1}$ and $\zeta_l^{-1}$ terms in the Laurent series expansion of $\mathcal{P}^{(r)}(z)$ and $\mathcal{P}^{(l)}(z)$, respectively, defined by
\begin{equation}
	\begin{aligned}
		\mathcal{P}^{(r)}(x,t;z)&=\mathcal{D}_0^{\hat{\sigma}_3}\left[\mathcal{M}^{(PC)}(\zeta_r(z),r(z_l))\right], &z\in \mathcal{U}_l^{+},\\
		\mathcal{P}^{(l)}(x,t;z)&=\mathcal{D}_0^{-\hat{\sigma}_3}\left[\sigma_1\mathcal{M}^{(PC)}(\zeta_l(z),r(z_l))\sigma_1\right], &z\in \mathcal{U}_l^{-}, 
	\end{aligned}
\end{equation}
where $\mathcal{M}^{(PC)}$ and $\beta=\beta(r(z_l))$ are defined in Appendix \ref{Appendix-B}. Following the analysis in \cite{DZ1993}, the asymptotic expansion below is obtained
\begin{equation}
	\mathcal{P}^{(1)}(z)=\begin{cases}
		\begin{aligned}
			&I+\left[\frac{1}{\zeta_r}\mathcal{P}_1^{(r)}+\mathcal{O}(\frac{\ln t}{t})\right]+\mathcal{O}(\zeta_r^{-2}),&z\in\partial\mathcal{U}_l^{+}\\
			&I+\left[\frac{1}{\zeta_l}\mathcal{P}_1^{(l)}+\mathcal{O}(\frac{\ln t}{t})\right]+\mathcal{O}(\zeta_l^{-2}),&z\in\partial\mathcal{U}_l^{-}. 
		\end{aligned}
	\end{cases}
\end{equation}
Thus the asymptotic expansion for the error jump matrix is derived  
\begin{equation}\label{Jerr}
	\mathcal{J}_l^{(err)}(x,t;z)=\begin{cases}
		\begin{aligned}
			&I+\mathcal{E}(z)\left[\frac{1}{\zeta_r}\begin{pmatrix}0&-i\beta\cdot\mathcal{D}_0^{2}\\i\overline{\beta\cdot\mathcal{D}_0^2}&0\end{pmatrix}\right]\mathcal{E}(z)^{-1}+\mathcal{O}(\frac{\ln t}{t}), &z\in\partial\mathcal{U}_l^{+},\\
			&I+\mathcal{E}(z)\left[\frac{1}{\zeta_l}\begin{pmatrix}0&i\overline{\beta\cdot\mathcal{D}_0^2}\\-i\beta\cdot\mathcal{D}_0^{2}&0\end{pmatrix}\right]\mathcal{E}(z)^{-1}+\mathcal{O}(\frac{\ln t}{t}),&z\in\partial\mathcal{U}_l^{-}.
		\end{aligned}
	\end{cases}
\end{equation}
In conjunction with (\ref{sqrt_t}) above, we establish that $\mathcal{J}_l^{(err)}(x,t;z)=I+\mathcal{O}(t^{-1/2})$ for $z\in \partial \mathcal{U}_l^{+}\cup\partial\mathcal{U}_l^{-}$. Moreover, a straightforward verification shows that $\mathcal{J}_l^{(err)}$ exhibits exponential decay to the identity matrix as $t\to\infty$ for $z\in\left(\Gamma_r\cup\overline{\Gamma}_r\right)\backslash \left(\mathcal{U}_d^{(l)}\cup \mathcal{U}_d^{(r)}\right)$. Consequently, for $z\in\Gamma_l^{(err)}$, it follows that
\begin{equation}\label{F1}
	\mathcal{J}_l^{(err)}(x,t;z)=I+\mathcal{O}(t^{-1/2}). 
\end{equation}
\par
In accordance with the Beals-Coifman theory \cite{BC1984}, the unique solution to the RHP associated with $\mathcal{M}_l^{(err)}$ admits the following integral representation
\begin{equation}
	\mathcal{M}_l^{(err)}(x,t;z)=I+\frac{1}{2\pi i}\int_{\Gamma_l^{(err)}}\frac{\mu(s)(\mathcal{J}_l^{(err)}(s)-I)}{s-z}ds, 
\end{equation}
where $\Gamma_l^{(err)}$ denotes the jump contour for $\mathcal{M}_l^{(err)}(x,t;z)$, and $\mu(z)$ is the unique solution to the singular integral equation: 
\begin{equation}
	\begin{aligned}
		\mu&=I+\mathcal{C}(\mu), \qquad\mu\in I+L^2(\Gamma_l^{(err)}), \qquad \mathcal{C}:L^2(\Gamma_l^{(err)})\to L^2(\Gamma_l^{(err)}), \\
		\mathcal{C}&f:=\mathcal{C}_-(f(\mathcal{J}_l^{(err)}-I)),\qquad (\mathcal{C}_-f)(z):=\lim_{\varepsilon\to0^+}\frac{1}{2\pi i}\int_{\Gamma_l^{(err)}}\frac{f(s)}{s-(z-i\varepsilon)}ds. 
	\end{aligned}
\end{equation}
This yields the coefficient $\mathcal{M}_{l,1}^{(err)}(x,t)$ of the asymptotic expansion $\mathcal{M}_{l}^{(err)}(x,t; z)=I+\mathcal{M}_{l,1}^{(err)}(x,t)/z+O(1/z^2)$ as
\begin{equation}
	\mathcal{M}_{l,1}^{(err)}(x,t)=-\frac{1}{2\pi i}\int_{\Gamma_l^{(err)}}\mu(s)(J_l^{(err)}(x,t;s)-I)ds, 
\end{equation}
which admits the following decomposition
\begin{equation}\label{decom}
	\begin{aligned}
		&\mathcal{M}_{l,1}^{(err)}(x,t)=\mathcal{F}_1+\mathcal{F}_2+\mathcal{F}_3, \quad \mathcal{F}_1:=-\frac{1}{2\pi i}\int_{\Gamma_l^{(err)}}(\mu(s)-I)\left(\mathcal{J}_{l}^{(err)}(s)-I\right)ds, \\
		&\mathcal{F}_2:=-\frac{1}{2\pi i}\int_{\Gamma_l^{(err)}\backslash(\mathcal{U}_l^{(l)}\cup \mathcal{U}_l^{(r)})}\left(\mathcal{J}_l^{(err)}(s)-I\right)ds,\quad\mathcal{F}_3:=-\frac{1}{2\pi i}\oint_{\partial \mathcal{U}_l^{(l)}\cup\partial \mathcal{U}_l^{(r)}}\left(\mathcal{J}_l^{(err)}(s)-I\right)ds. 
	\end{aligned}
\end{equation}
\par
From the estimation in (\ref{F1}), the boundedness of operator norm is obtained as
\begin{equation}
	||\mathcal{C}||_{L^2\to L^2}\le||\mathcal{C}_-||_{L^2\to L^2}||\mathcal{J}_l^{(err)}-I||_{L^2}=\mathcal{O}(t^{-\frac{1}{2}}), 
\end{equation}
which implies the solution estimation  
\begin{equation}
	||\mu-I||_{L^2}\le ||\mathcal{C}||_{L^2\to L^2}||\mu||_{L^2}\le \frac{||\mathcal{C}||_{L^2\to L^2}}{1-||\mathcal{C}||_{L^2\to L^2}}=\mathcal{O}(t^{-\frac{1}{2}}), 
\end{equation}
and consequently $\mathcal{F}_1=\mathcal{O}(t^{-1})$. Furthermore, we establish the exponential decay estimate $\mathcal{F}_2=\mathcal{O}(e^{-ct})$ for some positive constant $c>0$. The asymptotic analysis therefore reduces to the estimation of $\tilde{\mathcal{F}}$, the (1,2)-entry of $\mathcal{F}_3$. 
\par
By neglecting the higher-order terms $\mathcal{O}(\zeta_r^{-2})$ or $\mathcal{O}(\zeta_l^{-2})$ which contribute an error of order $\mathcal{O}(t^{-1})$, we can evaluate $\tilde{\mathcal{F}}$ through residue calculus at the simple poles located at $\pm z_l$ as 
\begin{equation}\label{F3}
	\begin{aligned}
		\tilde{\mathcal{F}}=&-\operatorname{Res}_{z=z_l} \left(\frac{1}{\zeta_r(z)}\left[\mathcal{E}(z)\mathcal{P}_1^{(r)}\mathcal{E}(z)^{-1}\right]_{12}\right)-\operatorname{Res}_{z=-z_l} \left(\frac{1}{\zeta_l(z)}\left[\mathcal{E}(z)\mathcal{P}_1^{(l)}\mathcal{E}(z)^{-1}\right]_{12}\right)+\mathcal{O}(\frac{\ln t}{t})\\
		=&-\frac{1}{\sqrt{2tg''(z_l)}}\left(a_l^2(z_l)\cdot i\beta\mathcal{D}_0^2+b_l^2(z_l)\cdot i\overline{\beta\mathcal{D}_0^2}+a_l^2(z_l)\cdot i\overline{\beta\mathcal{D}_0^2}+b_l^2(z_l)\cdot i\beta\mathcal{D}_0^2\right)+\mathcal{O}(\frac{\ln t}{t})\\
		=&-\frac{2i}{\sqrt{2tg''(z_l)}}\Re\left(\beta\mathcal{D}_0^2\right)+\mathcal{O}(t^{-1}\ln t)=-i\sqrt{\frac{2\nu}{tg''(z_l)}}\cos\left(\beta\mathcal{D}_0^2\right)+\mathcal{O}(\frac{\ln t}{t}),
	\end{aligned}
\end{equation}
where $a_l(z)=\left(\beta_l(z)+\beta(z)^{-1}\right)/2$ and $b_l(z)=i\left(\beta_l(z)-\beta(z)^{-1}\right)/2$ correspond to the (1,1) and (1,2) entries of $\mathcal{E}_l(z)$, respectively. 
\par
The synthesis of formulae (\ref{decom}), (\ref{F3}), (\ref{err_l}) and (\ref{construct}) yields the asymptotic solution $q(x,t)$ in Region \RNum{1} for $t\to \infty$ as follows
\begin{equation}
	q(x,t)=q_l+2\sqrt{\frac{2\nu}{tg''(z_l)}}\cos\left(\beta\mathcal{D}_0^2\right)+\mathcal{O}\left(\frac{\ln t}{t}\right), 
\end{equation}
which completes the proof of Region \RNum{1} in Theorem \ref{main}.
\par

\section{Region \RNum{2} for $6q_l^2t-12q_r^2t+\varepsilon<x<-4q_l^2t-2q_r^2t-\varepsilon$}\label{section5}

In Region \RNum{2} for $6q_l^2t-12q_r^2t+\varepsilon<x<-4q_l^2t-2q_r^2t-\varepsilon$, the stationary phase point $z_l$ lies to the left of $q_r$, which implies that the $(1,1)$-entry of $\widetilde{\mathcal{J}}_l(x,t;z)$ defined by (\ref{j1}) is exponentially large in $(-q_r,-z_l)\cup(z_l,q_r)$. To establish a stable limit problem, it is necessary to modify the $g$-function to incorporate two origin-symmetric gaps contained within $[-q_r,-q_l]$ and $[q_l,q_r]$, respectively. According to Section \ref{g=1}, it suffices to set the Riemann invariants $(z_1,z_2, z_3)=(q_r, z_d, q_l)$, whereupon the $g$-function takes the explicit form
\begin{equation}\label{gd}
	g_d(\xi;z) = \int_{q_l}^z \frac{12\zeta(\zeta^2 - z_d^2)(\zeta^2 - \mu^2)}{\mathcal{R}_d(z)} d\zeta, \quad \mathcal{R}_d(z)=\mathcal{R}(z;q_l,z_d,q_r),
\end{equation}
where $\mu=\mu(z_d)$ satisfies $\mu^2=-\xi+\frac{q_l^2+q_r^2-z_d^2}{2}$ with $z_d=z_d(\xi)$ being determined by the equation
\begin{equation}\label{zd}
	\int_{z_d}^{q_r}\frac{12\zeta(\zeta^2-z_d^2)(\zeta^2-\mu^2)}{\mathcal{R}_d(z)}d\zeta=0. 
\end{equation}
The signature table of $\Im g_d(\xi;z)$ is shown in Figure \ref{imgd}. Additionally, it holds that $\Im g_{d+}<0$ on $(-q_l,q_l)$, while $\Im g_{d+}>0$ on $(-q_r,-z_d)\cup(z_d,q_r)$. 
\par
We still need to check the existence of $z_d$, which satisfies the equation (\ref{zd}), or the equation
\begin{equation}\label{zdd}
	12\xi=v_1(q_l,z_d,q_r),
\end{equation}
where $v_1(z_1,z_2,z_3)$, $z_1<z_2<z_3$ has been shown in (\ref{v2}). Note that $v_1(z_1,z_2,z_3)$ is the velocity of the Whitham modulation equations for the defocusing mKdV equation derived in \cite{Whitham1976}. The relation with the velocity $w(r_1,r_2,r_3)$ of the Whitham modulation equations for KdV equation is given by
\begin{equation}\label{relationK}
	v_1(z_1,z_2,z_3)=w(-z_3^2,-z_2^2,-z_1^2). 
\end{equation}
It was shown in \cite{Levermore1988} that the Whitham modulation equations for KdV equation are strictly hyperbolic and satisfy the relation $\partial_{r_2}w_2(r_1,r_2,r_3)>0$ for $r_1<r_2<r_3$, which implies that
\begin{equation}
	\frac{\partial}{\partial z_2}v_1(z_1,z_2,z_3)=-2z_2\frac{\partial}{\partial r_2}w(-z_3^2,r_2,-z_1^2)<0,\qquad z_2>0. 
\end{equation}
According to the Implicit Function Theorem, the above relation shows that (\ref{zdd}) is invertible for $z_d$ as a function of $\xi$ only when $z_d>0$ or equivalently when $q_l>0$. 
\par
Additionally, the complete elliptic integrals admit expansions as $m\to 0$:
\begin{equation}
	K(m)=\frac{\pi}{2}\left(1+\frac{m^2}{4}+\frac{9}{64}m^4+\mathcal{O}(m^6)\right), \quad E(m)=\frac{\pi}{2}\left(1-\frac{m^2}{4}-\frac{3}{64}m^4+\mathcal{O}(m^6)\right),
\end{equation}
and as $m\to 1$: 
\begin{equation}
	K(m)=\frac{1}{2}\ln\frac{16}{1-m^2}(1+o(1)),\quad E(m)=1+\frac{1}{2}(1-m)\left[\ln\frac{16}{1-m^2}-1\right](1+o(1)).  
\end{equation}
Thus, as $m\to 0$ (or $z_d\to q_l$) and $m\to 1$ (or $z_d\to q_r$), it is derived that the right and left boundaries of $x/t$ in Region \RNum{2} are 
\begin{equation}
	v_1(q_l,q_l,q_r)=-4q_l^2-2q_r^2, \qquad v_1(q_l,q_r,q_r)=6q_l^2-12q_r^2, 
\end{equation}
respectively.
\par

\subsection{The opening of lenses}\

Introduce the transformation 
\begin{equation}
	\widetilde{\mathcal{M}}_d(x,t;z)=\mathcal{M}(x,t;z)e^{it(g_d(\xi;z)-\theta(\xi;z))\sigma_3}\mathcal{G}_d(x,t;z)\mathcal{D}_d(\xi;z)^{-\sigma_3}, 
\end{equation}
where the matrix function $\mathcal{G}_d(x,t;z)$ is precisely defined by
\begin{equation}
	\mathcal{G}_d(x,t;z)=\begin{cases}
		\begin{aligned}
			&\begin{pmatrix}
				1&0\\-re^{2itg_d}&1
			\end{pmatrix}, &z\in\Omega_d^{(1)}\cup\Omega_d^{(4)},\\
			&\begin{pmatrix}
				1&-r^*e^{-2itg_d}\\0&1
			\end{pmatrix},&z\in\overline{\Omega}_d^{(1)}\cup\overline{\Omega_d}^{(4)}, \\
			&\begin{pmatrix}
				1&\frac{r^*}{1-rr^*}e^{-2itg_d}\\0&1
			\end{pmatrix},&z\in\Omega_d^{(3)},\\
			&\begin{pmatrix}
				1&0\\\frac{r}{1-rr^*}e^{2itg_d}&1
			\end{pmatrix},&z\in\overline{\Omega}_d^{(3)},\\
			&\quad I, &else,
		\end{aligned}
	\end{cases}
\end{equation}
with the domains $\Omega_d^{(j)}$ and $\overline{\Omega}_d^{(j)}$ ($j=1,2,3$) illustrated in Figure \ref{o2}. The scalar function $\mathcal{D}_d(\xi;z)$ is given by: 
\begin{equation}\label{DD}
	\mathcal{D}_d(\xi;z)=\exp\left\{\frac{\mathcal{R}_d(z)}{2\pi i}\left[\left(\int_{-z_d}^{-q_l}+\int_{q_l}^{z_d} \right)\frac{\ln(a_+(\zeta)a^*_-(\zeta))}{(\zeta-z)\mathcal{R}_{d+}(\zeta)}d\zeta+\left(\int_{-q_r}^{-z_d}+\int_{z_d}^{q_r} \right)\frac{i \delta_1}{(\zeta-z)\mathcal{R}_d(\zeta)}d\zeta\right]\right\}, 
\end{equation}
which satisfies the following RHP.  
\par
\begin{prop}
	The function $\mathcal{D}_d(z)=\mathcal{D}_d(\xi;z)$ defined in (\ref{DD}) exhibits the following characteristics:\\
	(a) $\mathcal{D}_d(z)$ is holomorphic for $z\in\mathbb{C}\backslash[-q_r,q_r]$. \\
	(b) For $z\in (-q_r, q_r)$, $\mathcal{D}_d(z)$ satisfy the following jump conditions: \begin{equation}
		\begin{cases}
			\begin{aligned}
				&\mathcal{D}_{d+}(z)=\mathcal{D}_{d-}(z)\left(a_+(z)a_-^*(z)\right)^{-1}, &z\in(-z_d,-q_l)\cup(q_l,z_d), \\
				&\mathcal{D}_{d+}(z)\mathcal{D}_{d-}(z)=e^{-i\delta_1},&z\in(-q_r,-z_d), \\
				&\mathcal{D}_{d+}(z)\mathcal{D}_{d-}(z)=e^{i\delta_1}, &z\in(z_d,q_r), \\
				&\mathcal{D}_{d+}(z)\mathcal{D}_{d-}(z)=1, &z\in(-q_l,q_l). 
			\end{aligned}
		\end{cases}
	\end{equation}
    (c) $\mathcal{D}_d(z)$ admits the symmetry $\mathcal{D}_d(-z)=1/\mathcal{D}_d(z)$ for $z\in\mathbb{C}$. \\
    (d) $\mathcal{D}_d(z)=1+\mathcal{O}(z^{-1})$ as $z\to\infty$. 
\end{prop}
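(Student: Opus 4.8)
The plan is to prove (a)–(d) by the same Cauchy-integral analysis used for $\mathcal{D}_l$, the essential new feature being that the prefactor $\mathcal{R}_d$ now carries three bands and grows like $z^3$, which makes the normalization at infinity the only genuinely delicate point. Throughout I would write $\log\mathcal{D}_d(z)=\mathcal{R}_d(z)\,J(z)$ with
\begin{equation*}
	J(z)=\frac{1}{2\pi i}\int_{(-q_r,-q_l)\cup(q_l,q_r)}\frac{\Psi(\zeta)}{(\zeta-z)\widehat{\mathcal{R}}(\zeta)}\,d\zeta,
\end{equation*}
where $\Psi=\ln(a_+a_-^*)$, $\widehat{\mathcal{R}}=\mathcal{R}_{d+}$ on the two gaps $(q_l,z_d),(-z_d,-q_l)$, and $\Psi=i\delta_1$, $\widehat{\mathcal{R}}=\mathcal{R}_d$ on the two outer bands $(z_d,q_r),(-q_r,-z_d)$. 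Property (a) is then immediate: $J$ is holomorphic off its support $(-q_r,-q_l)\cup(q_l,q_r)$, the prefactor $\mathcal{R}_d$ is holomorphic off the bands $(-q_r,-z_d)\cup(-q_l,q_l)\cup(z_d,q_r)$, and both sets are contained in $[-q_r,q_r]$, so by the standard theory of Cauchy integrals $\mathcal{D}_d$ is holomorphic on $\mathbb{C}\setminus[-q_r,q_r]$.

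For (b) I would apply Plemelj's formula on each sub-interval, separating the two mechanisms according to whether $\mathcal{R}_d$ is continuous or changes sign. Across the gaps $(q_l,z_d)$ and $(-z_d,-q_l)$ the prefactor is continuous ($\mathcal{R}_{d+}=\mathcal{R}_{d-}$), so the additive jump $J_+-J_-=\ln(a_+a_-^*)/\mathcal{R}_{d+}$ is multiplied by $\mathcal{R}_{d+}$ and exponentiated to the ratio jump $(a_+a_-^*)^{-1}$ (with the contour orientation of the paper). Across the bands $\mathcal{R}_{d+}+\mathcal{R}_{d-}=0$, so the boundary relation becomes the product $\mathcal{D}_{d+}\mathcal{D}_{d-}=\exp\!\left(\mathcal{R}_{d+}(J_+-J_-)\right)$; on $(z_d,q_r)$ and $(-q_r,-z_d)$ the constant density $i\delta_1$ yields $\exp(\pm i\delta_1)$, the opposite signs on the two symmetric bands being dictated by the sign of $\mathcal{R}_{d+}$ there (equivalently, forced by the symmetry (c)), while on the central band $(-q_l,q_l)$ no density is supported, $J$ is continuous, and $\mathcal{R}_{d+}+\mathcal{R}_{d-}=0$ gives $\mathcal{D}_{d+}\mathcal{D}_{d-}=1$.

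Property (c) I would obtain from the substitution $\zeta\mapsto-\zeta$ in the defining integrals: the pairs of gap and band intervals are interchanged, $\mathcal{R}_d(-z)=-\mathcal{R}_d(z)$ and $\mathcal{R}_{d+}(-\zeta)=-\mathcal{R}_{d-}(\zeta)$ by oddness, the evenness $a(-z)=a(z)$ from Lemma \ref{abr}(b) leaves $\ln(a_+a_-^*)$ invariant under the simultaneous $\pm$ swap, and the two $i\delta_1$ band integrals are carried onto one another. Collecting the sign changes yields $\log\mathcal{D}_d(-z)=-\log\mathcal{D}_d(z)$, i.e. $\mathcal{D}_d(-z)=1/\mathcal{D}_d(z)$.

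The hard part will be (d). Using $\mathcal{R}_d(z)=z^3-\tfrac12(q_l^2+z_d^2+q_r^2)z+\mathcal{O}(z^{-1})$ (only odd powers survive, by the expansion (\ref{coe}) and the oddness of $\mathcal{R}_d$), a large-$z$ expansion gives
\begin{equation*}
	\frac{\mathcal{R}_d(z)}{\zeta-z}=-z^2-\zeta z-\left(\zeta^2-\tfrac12(q_l^2+z_d^2+q_r^2)\right)+\mathcal{O}(z^{-1}),
\end{equation*}
so the $\mathcal{O}(z^2)$, $\mathcal{O}(z)$ and $\mathcal{O}(1)$ parts of $\log\mathcal{D}_d$ are governed by the zeroth, first and second moments of the density $\Psi/\widehat{\mathcal{R}}$. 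That density is odd (even $a$ and the constant $\delta_1$ against the odd $\mathcal{R}_d$ and $\mathcal{R}_{d+}$), so the zeroth and second moments vanish by parity over the symmetric support, killing the $\mathcal{O}(z^2)$ and $\mathcal{O}(1)$ terms — exactly the step that was automatic for $\mathcal{D}_l$, where $\mathcal{R}_l\sim z$ admits no positive powers beyond $z^0$. The first moment $\int \zeta\,\Psi/\widehat{\mathcal{R}}\,d\zeta$, however, is the integral of an even function and does not vanish by symmetry; it is precisely the period/normalization condition that the real constant $\delta_1$ must be chosen to enforce, and with that choice the $\mathcal{O}(z)$ term also cancels, leaving $\mathcal{D}_d(z)=1+\mathcal{O}(z^{-1})$. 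I expect this first-moment cancellation, together with verifying that the $\delta_1$ solving it is real, to be the main obstacle, since it is the one place where the genus-two geometry forces a nontrivial constraint absent in the genus-zero case.
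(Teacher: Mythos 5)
Your proposal is correct and follows essentially the same route as the paper's proof, which handles (a)--(b) by Plemelj's formula, (c) by the substitution $\zeta\mapsto-\zeta$ together with the symmetries of $a(z)$ and of $\mathcal{R}_d$, and (d) by fixing $\delta_1$ exactly as in (\ref{delta-1-1}); your expansion of $\mathcal{R}_d(z)/(\zeta-z)$ into zeroth, first and second moments merely makes explicit the cancellation the paper declares ``automatically satisfied via the symmetry of the problem,'' with parity killing $m_0$ and $m_2$, the choice of $\delta_1$ killing $m_1$, and your first-moment condition reproducing (\ref{delta-1-1}) with $\delta_1$ real because $\ln(a_+a_-^*)=\ln|a|^2$ is real on the gaps while $\mathcal{R}_{d+}$ is purely imaginary on the bands. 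One bookkeeping caveat: on the symmetric bands the boundary value obeys $\mathcal{R}_{d+}(-\zeta)=-\mathcal{R}_{d-}(\zeta)=\mathcal{R}_{d+}(\zeta)$, i.e.\ it is \emph{even} there (oddness of $\mathcal{R}_d$ holds only off the cut), so the odd band density you invoke --- and hence the opposite signs $e^{\mp i\delta_1}$ in (b) --- require reading the unsubscripted $\mathcal{R}_d(\zeta)$ in (\ref{DD}) as opposite branch values on the two bands, which is exactly the convention your hedge ``forced by the symmetry (c)'' pins down.
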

\par
\begin{proof}
	(a) and (b) are the direct conclusions of Plemelj's formula. (c) follows from the change of variable $\zeta\mapsto-\zeta$ in integration, the symmetry of $a(z)$ shown in Lemma \ref{abr}, and the fact that $\mathcal{R}_{d+}(-z)=-\mathcal{R}_{d+}(z)$  for $z\in\mathbb{C}\backslash[-q_r,-z_d]\cup[-q_l,q_l]\cup[z_d,q_r]$. In fact, if we set  
	\begin{equation}\label{delta-1-1}
		\delta_1=-i\frac{\int_{q_l}^{z_d}\frac{\ln(|a(\zeta)|^2\zeta d\zeta}{\mathcal{R}_{d+}(\zeta)}}{\int_{z_d}^{q_r}\frac{\zeta d\zeta}{\mathcal{R}_{d+}(\zeta)}}=-\frac{\sqrt{q_r^2-q_l^2}}{K(m)}\int_{q_l}^{z_d}\frac{\ln(|a(\zeta)|^2)\zeta d\zeta}{\mathcal{R}_{d+}(\zeta)}, 
	\end{equation}
	then (d) is automatically satisfied via the symmetry of the problem. 
\end{proof}
\begin{figure}[ht]
	\begin{minipage}[b]{0.42\textwidth}  
		\centering
		\includegraphics[width=\linewidth]{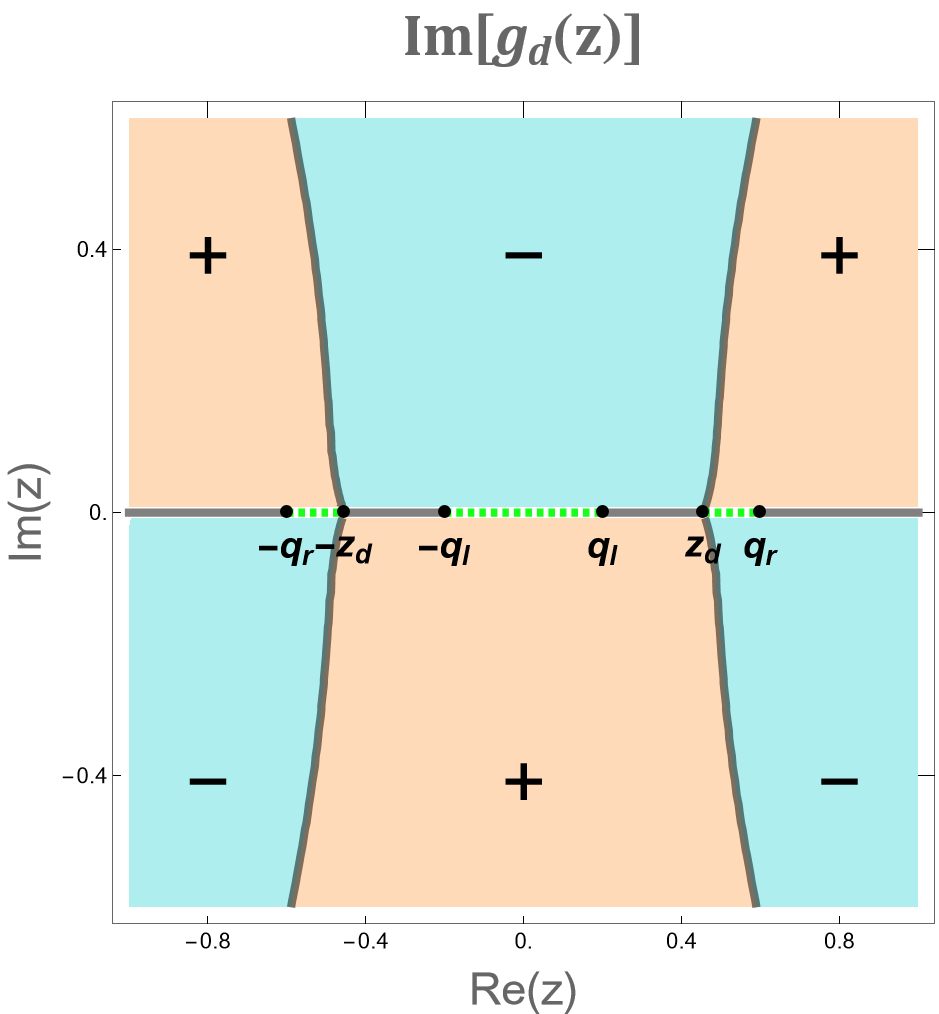}  
		\caption{The signs of $\Im g_d(\xi;z)$}
		\label{imgd}
	\end{minipage}  
    \hfill
	\begin{minipage}[b]{0.56\textwidth}  
		\centering
		\begin{tikzpicture}[scale=1]
			\draw[line width=1pt,dashed, draw=gray] (-4.5,0) -- (4.5,0) node[pos=1, below, font=\small]{$\mathbb{R}$};
			\draw[line width=1pt,-{Stealth[length=2mm, width=1.5mm]}] (-0.8,0) -- (0.15,0) node[pos=-0.95,above=1mm, font=\small] {$-z_d$} node[pos=0, below=-0.5mm,, font=\small] {$-q_l$};
			\draw[line width=1pt] (-0.15,0) -- (0.8,0) node[pos=1, below, font=\small] {$q_l$} node[pos=1.85,above=1mm, font=\small] {$z_d$};
			\draw[line width=1pt,-{Stealth[length=2mm, width=1.5mm]}] (0,-2.595) -- (0.9,-1.038) node[pos=0.9, below=3mm, font=\small, red!80!white]{$\overline{\Gamma}_d^{(2)}$};
			\draw[line width=1pt] (0.6,-1.557) -- (1.5,0) ;
			\draw[line width=1pt,-{Stealth[length=2mm, width=1.5mm]}] (1.5,0) -- (2.5,0.5); 
			\draw[line width=1pt] (2.4,0.45) -- (4.5,1.5) node[pos=0.5, above, font=\small, red!80!white]{$\Gamma_d^{(1)}$}; 
			\draw[line width=1pt,-{Stealth[length=2mm, width=1.5mm]}] (0,2.595) -- (0.9,1.038) node[pos=1, above=3mm, font=\small, red!80!white]{$\Gamma_d^{(2)}$};
			\draw[line width=1pt] (0.6,1.557) -- (1.5,0); 
			\draw[line width=1pt,-{Stealth[length=2mm, width=1.5mm]}] (1.5,0) -- (2.5,-0.5); 
			\draw[line width=1pt] (2.4,-0.45) -- (4.5,-1.5) node[pos=0.5, below, font=\small, red!80!white]{$\overline{\Gamma}_d^{(1)}$}; 
			\draw[line width=1pt,-{Stealth[length=2mm, width=1.5mm]}] (-4.5,-1.5) -- (-2.4,-0.45) node[pos=0.7, below, font=\small, red!80!white]{$\overline{\Gamma}_d^{(4)}$};
			\draw[line width=1pt] (-2.5,-0.5) -- (-1.5,0); 
			\draw[line width=1pt,-{Stealth[length=2mm, width=1.5mm]}] (-1.5,0) -- (-0.6,1.557) ; 
			\draw[line width=1pt] (-0.9,1.038) -- (0,2.595) node[pos=0.1, above=1mm, font=\small, red!80!white]{$\Gamma_d^{(3)}$}; 
			\draw[line width=1pt,-{Stealth[length=2mm, width=1.5mm]}] (-4.5,1.5) -- (-2.4,0.45) node[pos=0.5, above, font=\small, red!80!white]{$\Gamma_d^{(4)}$};
			\draw[line width=1pt] (-2.5,0.5) -- (-1.5,0); 
			\draw[line width=1pt,-{Stealth[length=2mm, width=1.5mm]}] (-1.5,0) -- (-0.6,-1.557); 
			\draw[line width=1pt] (-0.9,-1.038) -- (0,-2.595) node[pos=0.15, below=2.5mm, font=\small, red!80!white]{$\overline{\Gamma}_d^{(3)}$}; 
			\draw[line width=1pt,-{Stealth[length=2mm, width=1.5mm]}] (1.5,0) -- (2.4,0);
			\draw[line width=1pt] (2.1,0) -- (3,0) node[pos=1,below, font=\small] {$q_r$};
			\draw[line width=1pt,-{Stealth[length=2mm, width=1.5mm]}] (-3,0) -- (-2.1,0) node[pos=0,below=-1mm, font=\small] {$-q_r$};
			\draw[line width=1pt] (-2.4,0) -- (-1.5,0);
			\draw[fill=black, draw=black, line width=0.05pt] (0.8,0) circle (0.03cm);
			\draw[fill=black, draw=black, line width=0.05pt] (1.5,0) circle (0.03cm);
			\draw[fill=black, draw=black, line width=0.05pt] (-0.8,0) circle (0.03cm);
			\draw[fill=black, draw=black, line width=0.05pt] (-1.5,0) circle (0.03cm);
			\path (-4.5,-3.5) rectangle (4,3.5);
			\node[font=\small,blue!40!black] at (0,0.9) {$\Omega_d^{(2)}$};
			\node[font=\small,orange!40!black] at (3.5,0.5) {$\Omega_d^{(1)}$};
			\node[font=\small,orange!40!black] at (-3.5,0.5) {$\Omega_d^{(3)}$};
			\node[font=\small,orange!40!black] at (0,-0.9) {$\overline{\Omega}_d^{(2)}$};
			\node[font=\small,blue!40!black] at (3.5,-0.5) {$\overline{\Omega}_d^{(1)}$};
			\node[font=\small,blue!40!black] at (-3.5,-0.5) {$\overline{\Omega}_d^{(3)}$};
			\path (-4.5,-3) rectangle (4,3);
		\end{tikzpicture}
		\caption{The opening of lenses in Region \RNum{2}}
		\label{o2}
	\end{minipage}
\end{figure}
\par
Thus, the function $\widetilde{\mathcal{M}}_d(x,t;z)$ satisfies the following RHP: 
\begin{rhp}
	Find a $2\times2$ matrix-valued function $\widetilde{\mathcal{M}}_d(z)=\widetilde{\mathcal{M}}_d(x,t;z)$ satisfies the properties: \\
	(1) $\widetilde{\mathcal{M}}_d(z)$ is holomorphic for $z\in\mathbb{C}\backslash\left(\bigcup_{j=1}^4(\Gamma_d^{(j)}\cup\overline{\Gamma}_d^{(j)}) \cup [-q_r,-z_d]\cup[-q_l,q_l]\cup[z_d,q_r]\right)$. \\
	(2) For $z\in\bigcup_{j=1}^4(\Gamma_d^{(j)}\cup\overline{\Gamma}_d^{(j)}) \cup (-q_r,-z_d)\cup(-q_l,q_l)\cup(z_d,q_r)$, we have 
	\begin{equation}
		\widetilde{\mathcal{M}}_{d+}(x,t;z)=\widetilde{\mathcal{M}}_{d-}(x,t;z)\widetilde{\mathcal{J}}_d(x,t;z),
	\end{equation}
	where
	\begin{equation}
		\widetilde{\mathcal{J}}_d(x,t;z)=\begin{cases}
			\begin{aligned}
				&\begin{pmatrix}
					1&0\\\mathcal{D}_d^{-2}re^{2itg_d}&1
				\end{pmatrix}, &z\in\Gamma_d^{(1)}\cup\Gamma_d^{(4)},\\
				&\begin{pmatrix}
					1&-\mathcal{D}_d^2 r^*e^{-2itg_d}\\0&1
				\end{pmatrix},&z\in\overline{\Gamma}_d^{(1)}\cup\overline{\Gamma}_d^{(4)}, \\
				&\begin{pmatrix}
					1&-\mathcal{D}_d^2\frac{r^*}{1-rr^*}e^{-2itg_d}\\0&1
				\end{pmatrix},&z\in\Gamma_d^{(2)}\cup\Gamma_d^{(3)},\\
				&\begin{pmatrix}
					1&0\\\mathcal{D}_d^{-2}\frac{r}{1-rr^*}e^{2itg_d}&1
				\end{pmatrix},&z\in\overline{\Gamma}_d^{(2)}\cup\overline{\Gamma}_d^{(3)},\\
				&\begin{pmatrix}
					\frac{\mathcal{D}_{d-}e^{2itg_{d+}}}{\mathcal{D}_{d+}a_+a^*_-}&-e^{-i(\delta_0+\delta_1)}\\e^{i(\delta_0+\delta_1)}&0
				\end{pmatrix}, &z\in(-q_r,-z_d),\\
				&\begin{pmatrix}
					\frac{\mathcal{D}_{d-}e^{2itg_{d+}}}{\mathcal{D}_{d+}a_+a^*_-}&-e^{i(\delta_0+\delta_1)}\\e^{-i(\delta_0+\delta_1)}&0
				\end{pmatrix}, &z\in(z_d,q_r),\\
				&-i\sigma_2, &z\in(-q_l,q_l), 
			\end{aligned}
		\end{cases}
	\end{equation}
    where $\delta_1$ is given by (\ref{delta-1-1}) and $\delta_0=\delta_0(\xi)$ is formulated as
\begin{equation}
	\delta_0(\xi)=2\int_{-q_l}^{-z_d}g'_{d+}(\zeta)d\zeta=\pi\frac{\sqrt{q_r^2-q_l^2}}{K(m)}\left(x+2(q_r^2+z_d^2+q_l^2)t\right), \quad m^2=\frac{q_r^2-z_d^2}{q_r^2-q_l^2}. 
\end{equation}
	(3) $\mathcal{M}_{d}^{(2)}(z)=I+\mathcal{O}(z^{-1})$, as $z\to\infty$ in $\mathbb{C}\backslash\left(\bigcup_{j=1}^4\Gamma_d^{(j)}\cup\overline{\Gamma}_d^{(j)} \right)$. \\
	(4) Symmetries: 
	\begin{equation}
\widetilde{\mathcal{M}}_d(z)=\overline{\widetilde{\mathcal{M}}_d(-\bar{z})}=\sigma_1\widetilde{\mathcal{M}}_d(-z)\sigma_1=\sigma_1\overline{\widetilde{\mathcal{M}}_d(\bar{z})}\sigma_1. 
	\end{equation}
\end{rhp}
\par

\par
\subsection{The outer parametrix for $\mathcal{M}^{(\infty)}_{dsw}$}\

According to the signature of $\Im g_d$ in Figure \ref{imgd}, as the time variable $t$  is sufficiently large, the jump matrix $\widetilde{\mathcal{J}}_{d}(x,t;z)$  converges pointwise to identity away from $\mathbb{R}$, and to constant matrices on $\mathbb{R}$. In fact, the convergence is uniform and exponential away from $\pm z_d$, while both of the non-uniform convergences near $z_d$ and $-z_d$ correspond to the Airy parametrix in Appendix \ref{Appendix-B}. Thus, it is natural for us to consider the limiting model problem for $\mathcal{M}_{dsw}^{(\infty)}(x,t;z)$ as follow. 
\par
\begin{rhp}\label{rhp_dsw}
	Find a $2\times2$ matrix-valued function $\mathcal{M}^{(\infty)}_{dsw}(z)=\mathcal{M}^{(\infty)}_{dsw}(x,t;z)$ satisfying: \\
	(1) $\mathcal{M}^{(\infty)}_{dsw}(z)$ is holomorphic for $z\in\mathbb{C}\backslash([-q_r,-z_d]\cup[-q_l,q_l]\cup [z_d,q_r])$.  \\
	(2) For $z\in(-q_r,-z_d)\cup(-q_l,q_l)\cup (z_d,q_r)$, the jump condition is
    \begin{equation}
		\mathcal{M}^{(\infty)}_{dsw+}(z)=\mathcal{M}^{(\infty)}_{dsw-}(z)\mathcal{\mathcal{J}}^{(\infty)}_{dsw}(x,t;z), 
	\end{equation}where 
	\begin{equation}
		\mathcal{J}^{(\infty)}_{dsw}(x,t;z)=\begin{cases}
			\begin{aligned}
				&\begin{pmatrix}
					0&-e^{i\delta}\\e^{-i\delta}&0
				\end{pmatrix}, &z\in(z_d,q_r),\\
				&-i\sigma_2, &z\in (-q_l,q_l), \\
				&\begin{pmatrix}
					0&-e^{-i\delta}\\e^{i\delta}&0
				\end{pmatrix}, &z\in(-q_r,-z_d),
			\end{aligned}
		\end{cases}
	\end{equation}
	with $\delta:=\delta_0+\delta_1$. \\
	(3) $\mathcal{M}^{(\infty)}_{dsw}(z)=I+\mathcal{O}(z^{-1})$ as $z\to\infty$. 
\end{rhp}
\par
In fact, the model problem above can be solved via a method of conformal transformation and provides the periodic traveling wave solution (\ref{qdsw}) of the defocusing mKdV equation. 
\par

\subsubsection{Auxiliary RHP for a new function $\mathcal{N}(x,t;w)$}\

To solve the model RHP \ref{rhp_dsw}, consider two real parameters $\tilde{q}>\tilde{d}>0$ satisfying $\tilde{q}^2=q_r^2-q_l^2$ and $\tilde{d}^2=z_d^2-q_l^2$, then it is ready to construct an auxiliary RHP for a new function $\mathcal{N}(x,t;w)$ which can be solved via the Riemann-theta functions. 
\par
\begin{rhp}\label{rhp_w}
	Find a $2\times2$ matrix-valued function $\mathcal{N}(w)=\mathcal{N}(x,t;w)$ satisfying: \\
	(1) $\mathcal{N}(w)$ is holomorphic for $w\in\mathbb{C}\backslash([-\tilde{q},-\tilde{d}]\cup[\tilde{d},\tilde{q}])$.  \\
	(2) For $w\in(-\tilde{q},-\tilde{d})\cup(\tilde{d},\tilde{q})$, the jump condition is
	\begin{equation}
		\mathcal{N}_{+}(w)=\mathcal{N}_{-}(w)J(w), 
	\end{equation}
	where 
	\begin{equation}
		J(w)=\begin{cases}
			\begin{aligned}
				&\begin{pmatrix}
					0&-e^{-i\tilde{\delta}}\\e^{i\tilde{\delta}}&0
				\end{pmatrix}, &w\in(-\tilde{q},-\tilde{d}),\\
				&\begin{pmatrix}
					0&-e^{i\tilde{\delta}}\\e^{-i\tilde{\delta}}&0
				\end{pmatrix}, &w\in(\tilde{d},\tilde{q}),    
			\end{aligned}
		\end{cases}
	\end{equation}
	where $\tilde{\delta}:= \delta_0 + \delta_1 + \delta_2$ and $\delta_2 = \delta_2(x,t)$ is determined by equation (\ref{delta2}) below. \\
	(3) $\mathcal{N}(w)=I+\mathcal{O}(w^{-1})$ as $w\to\infty$.     
\end{rhp}
\par
The explicit formula for function $\mathcal{N}(w)$ can be constructed by Riemann-theta function defined on the Jacobi variety ($\cong \mathbb{C}/\mathbb{Z}^2$) of the algebraic curve corresponding to a compact Riemann surface of the genus one, i.e., $\mathcal{S}_1=\{(w,y)\in\mathbb{C}^2|y^2=(w^2-\tilde{d}^2)(w^2-\tilde{q}^2)\}$, whose canonical homology basis $\{a,b\}$ is shown in Figure \ref{abcycle}. 
\par
\begin{figure}[ht]
	\centering
	\begin{tikzpicture}
		\draw[line width=5pt,gray] (1,0) -- (3,0) node[pos=0,below=-0.5mm,black]{$\tilde{d}$} node[pos=1,below=-0.5mm,black]{$\tilde{q}$};
		\draw[line width=5pt,gray] (-3,0) -- (-1,0) node[pos=0,below=-0.5mm,black]{$-\tilde{q}$} node[pos=1,below=-1mm,black]{$-\tilde{d}$};
		\draw[line width=2pt,blue!40!white] (1.5,0) to [out=90, in=90, looseness=0.9] (-1.5,0);
		\draw[dashed,line width=2pt,blue!40!white] (-1.5,0) to [out=-90, in=-90, looseness=0.9] (1.5,0);
		\draw[line width=2pt,red!40!white] (-0.5,0) to [out=90, in=90, looseness=0.9] (-3.5,0);
		\draw[line width=2pt,red!40!white] (-3.5,0) to [out=-90, in=-90, looseness=0.9] (-0.5,0);
		\node[red!40!white] at (-2,1){$a$};
		\node[blue!40!white] at (0,1){$b$};
		\draw[blue!40!white,line width=1pt,-{Stealth[length=3mm, width=2mm]}] (-0.2,0.8) -- (-0.21,0.8);
		\draw[red!40!white,line width=1pt,-{Stealth[length=3mm, width=2mm]}] (-2.11,0.8) -- (-2.12,0.8);
	\end{tikzpicture}
	\caption{The basis $\{a,b\}$ of homology group $H_1(\mathcal{S}_1)$ associated with the Riemann surface $\mathcal{S}_1$.}
	\label{abcycle}
\end{figure}
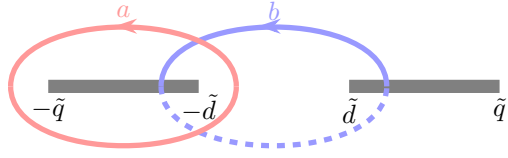
\par
Firstly, introduce the normalized holomorphic differential 
\begin{equation}
	\eta=\frac{-\tilde{q}}{2i K(m)}\cdot \frac{dw}{\sqrt{(w^2-\tilde{d}^2)(w^2-\tilde{q}^2)}}, \quad m=\frac{\sqrt{\tilde{q}^2-\tilde{p}^2}}{\tilde{q}}
\end{equation}
on $\mathcal{S}_1$, which satisfies
\begin{equation}
	\oint_a \eta=2\int_{-\tilde{d}}^{-\tilde{q}} \eta_+=1,\qquad \tau:=\oint_b\eta=2\int_{\tilde{d}}^{-\tilde{d}} \eta=2i\frac{K'(m)}{K(m)}\in i\mathbb{R}_+,
\end{equation}
where $K'(m):=K(m')=K(\sqrt{1-m^2})$.  
\par
Next, define the Riemann-theta function 
\begin{equation}
	\Theta(w)=\Theta(w;\tau)=\sum_{n\in\mathbb{Z}}e^{2\pi inw+\pi in^2\tau}, 
\end{equation}
with the elliptic half-period ratio $\tau$ written as 
\begin{equation}\label{tau}
	\tau=i\frac{K'(\tilde{m})}{K(\tilde{m})},\quad  K(\tilde{m})=\frac{1+m'}{2}K(m),\quad \tilde{m}=\frac{1-m'}{1+m'}. 
\end{equation}
The Riemann-theta function is an entire function of $w$ and satisfies the quasi-doubly periodic properties:
\begin{equation}
	\Theta(w+1)=\Theta(w), \quad\Theta(w+\tau)=e^{-2\pi iw-\pi i\tau}\Theta(w),
\end{equation}
and vanishes at the lattice of half periods:
\begin{equation}
	\Theta(w)=0\iff w\in\frac{1}{2}+\frac{\tau}{2}+\mathbb{Z}+\tau\mathbb{Z}. 
\end{equation}
\par
Then, introduce the Abel-Jacobi embedding with base point $q_r$ as
\begin{equation}
	\mathcal{A}(w):=\int_{q_r}^w \eta, 
\end{equation}
which satisfies the following jump conditions 
\begin{equation}
	\begin{cases}
		\begin{aligned}
			& \mathcal{A}_+(w)+\mathcal{A}_-(w)=\tau, &w\in(-\tilde{q},-\tilde{d}),\\
			&\mathcal{A}_+(w)+\mathcal{A}_-(w)=0, &w\in(\tilde{d},\tilde{q}),\\
			&\mathcal{A}_+(w)-\mathcal{A}_-(w)=-1, &w\in(-\tilde{d}, \tilde{d}),\\
			&\mathcal{A}_+(w)=\mathcal{A}_-(w), & elsewhere,
		\end{aligned}
	\end{cases}
\end{equation}
and it is derived from the symmetry of $\mathcal{A}(w)$ that $\mathcal{A}(0)=-\frac{1}{2}+\frac{\tau}{4}$ and $\mathcal{A(\infty)}=\frac{\tau}{4}$.  
\par
Finally, let us also introduce the function $\alpha:=\alpha(w)=\left( \frac{(w-\tilde{q})(w+\tilde{d})}{(w-\tilde{d})(w
	+\tilde{q})}\right)^{\frac{1}{4}}$, characterized by the following properties: 
\begin{equation}
	\begin{cases}
		\begin{aligned}
			&\alpha(w) \text{ is analytic off } [-\tilde{q},-\tilde{d}]\cup[\tilde{d},\tilde{q}]& \\
			&\alpha_+(w)=i\alpha_-(w),\quad z\in(-\tilde{q},-\tilde{d})\cup(\tilde{d}, \tilde{q}),\\
			&\alpha(w)\to 1, \qquad \qquad  z\to\infty,
		\end{aligned}
	\end{cases}
\end{equation}
then the solution of RHP \ref{rhp_w} is given by 
\begin{equation}\label{mathcal-N-Sol}
	\begin{aligned}
		\mathcal{N}(w)&=\frac{\Theta(0)}{2\Theta\left(\frac{\tilde{\delta}}{\pi}\right)}
		\begin{pmatrix} 
			\left(\alpha(w)+\alpha(w)^{-1}\right)\frac{\Theta\left(\mathcal{A}(w)-\frac{\tilde{\delta}}{\pi}-\frac{\tau}{4}\right)}{\Theta\left(\mathcal{A}(w)-\frac{\tau}{4}\right)} & 
			i\left(\alpha(w)-\alpha(w)^{-1}\right)\frac{\Theta\left(-\mathcal{A}(w)-\frac{\tilde{\delta}}{\pi}-\frac{\tau}{4}\right)}{\Theta\left(-\mathcal{A}(w)-\frac{\tau}{4}\right)}e^{i\tilde{\delta}} \\
			-i\left(\alpha(w)-\alpha(w)^{-1}\right)\frac{\Theta\left(\mathcal{A}(w)-\frac{\tilde{\delta}}{\pi}+\frac{\tau}{4}\right)}{\Theta\left(\mathcal{A}(w)+\frac{\tau}{4}\right)}e^{-i\tilde{\delta}} & 
			\left(\alpha(w)+\alpha(w)^{-1}\right)\frac{\Theta\left(-\mathcal{A}(w)-\frac{\tilde{\delta}}{\pi}+\frac{\tau}{4}\right)}{\Theta\left(-\mathcal{A}(w)+\frac{\tau}{4}\right)}
		\end{pmatrix}.
	\end{aligned}
\end{equation}
\par
\subsubsection{Variable substitution: $w\mapsto z$}\

To proceed further, take a transformation of the complex plane to reduce RHP \ref{rhp_w} to RHP \ref{rhp_dsw} by introducing a change of variable $w\mapsto z$ defined by
\begin{equation}
	w=\sqrt{z^2-q_l^2}. 
\end{equation}
In addition, denote $q_r=\sqrt{\tilde{q}^2-q_l^2}$ and $z_d=\sqrt{\tilde{p}^2-q_l^2}.$ It is clear that the function $w=w(z)$ is analytic off $[-q_l,q_l]$ and satisfies $w_+(z) + w_-(z)=0$ on $(-q_l,q_l)$. 
\par
Next, introduce a matrix 
\begin{equation}
	\mathcal{N}^{(1)}(z)=\frac{1}{2}\begin{pmatrix}
		\gamma(z)+\gamma(z)^{-1}&\gamma(z)-\gamma(z)^{-1}\\
		\gamma(z)-\gamma(z)^{-1}&\gamma(z)+\gamma(z)^{-1}
	\end{pmatrix} \mathcal{N}(w(z)),
\end{equation}
where 
\begin{equation}
	\gamma(z)=\left(\frac{z^2}{z^2-q_l^2}\right)^{\frac{1}{4}}. 
\end{equation}
Then the matrix $\mathcal{N}^{(1)}(z)$ solves the following RHP. 
\par
\begin{rhp}
	Find a $2\times2$ matrix-valued function $\mathcal{N}^{(1)}(z)=\mathcal{N}^{(1)}(x,t;z)$ satisfying: \\
	(1) $\mathcal{N}^{(1)}(z)$ is holomorphic for $z\in\mathbb{C}\backslash([-q_r,-z_d]\cup[-q_l,q_l]\cup [z_d,q_r])$. \\
	(2) For $z\in(-q_r,-z_d)\cup(-q_l,q_l)\cup (z_d,q_r)$, the jump condition is
    \begin{equation}
		\mathcal{N}^{(1)}_{+}(z)=\mathcal{N}^{(1)}_{-}(z)J^{(1)}(z), 
	\end{equation}
	where
	\begin{equation}
		J^{(1)}(z)=\begin{cases}
			\begin{aligned}
				&\begin{pmatrix}
					0&-e^{-i\tilde{\delta}}\\e^{i\tilde{\delta}}&0
				\end{pmatrix}, &z\in(-q_r,-z_d),\\
				&\quad i\sigma_1, &z\in(-q_l,0),\\
				&-i\sigma_1, &z\in(0,q_l),\\
				&\begin{pmatrix}
					0&-e^{i\tilde{\delta}}\\e^{-i\tilde{\delta}}&0
				\end{pmatrix}, &z\in(z_d,q_r).     
			\end{aligned}
		\end{cases}
	\end{equation}
	(3) $\mathcal{N}^{(1)}(z)=I+\mathcal{O}(z^{-1})$ as $z\to \infty.$
\end{rhp}
\par
\begin{rmk}
	To ensure that $\gamma(z)+\gamma(z)^{-1}$ and $\gamma(z)-\gamma(z)^{-1}$ have no poles (except the branch points $\pm q_l$), define $\gamma(z)$ as $\left(\frac{z^2}{z^2-q_l^2}\right)^{\frac{1}{4}}$ instead of $\left(\frac{z-q_l}{z+q_l}\right)^{\frac{1}{4}}$. 
\end{rmk}
\par
In fact, a clear discrepancy persists between the jump conditions for $\mathcal{N}^{(1)}(z)$ and $\mathcal{M}_{dsw}(z)$, which require additional transformation. To do so, introduce a scalar function $\mathcal{D}_u=\mathcal{D}_u(z)$ that is holomorphic for $z\in\mathbb{C}\backslash([-q_r,-z_d]\cup[-q_l,q_l]\cup [z_d,q_r])$ and satisfies the jump conditions: 
\begin{equation}
	\begin{cases}
		\begin{aligned}
			&\mathcal{D}_{u+}\mathcal{D}_{u-}=i, &z\in (-q_l, 0), \\
			&\mathcal{D}_{u+}\mathcal{D}_{u-}=-i, &z\in (0, q_l), \\
			&\mathcal{D}_{u+}\mathcal{D}_{u-}=e^{i\delta_2}, &z\in (-q_r,-z_d), \\
			&\mathcal{D}_{u+}\mathcal{D}_{u-}=e^{-i\delta_2}, &z\in (z_d, q_r),  \\
		\end{aligned}
	\end{cases}
\end{equation}
where $\delta_2$ is determined by the normalization condition $\mathcal{D}_u(z)=1+\mathcal{O}(z^{-1})$ as $z\to\infty$. Explicitly, according to Plemelj's formula, the function $\mathcal{D}_u$ can be written as 
\begin{equation}
	\begin{aligned}
		\mathcal{D}_u(z) = \exp \bigg\{ 
		\frac{\mathcal{R}_d(z)}{2\pi i} \times &\bigg[ 
		\int_{-q_l}^0 \frac{\frac{\pi i}{2}}{(s-z)\mathcal{R}_d(s)} ds 
		+ \int_0^{q_l} \frac{-\frac{\pi i}{2}}{(s-z)\mathcal{R}_d(s)} ds \\
		&+\int_{-q_r}^{-z_d} \frac{i\delta_2}{(s-z)\mathcal{R}_d(s)} ds 
		+ \int_{z_d}^{q_r} \frac{-i\delta_2}{(s-z)\mathcal{R}_d(s)} ds 
		\bigg] \bigg\},
	\end{aligned}
\end{equation}
and the normalization above implies that
\begin{equation}\label{delta2}
	\delta_2=\frac{\pi}{2}\frac{\int_0^{q_l}\frac{sds}{\mathcal{R}_d(s)}}{\int_{z_d}^{q_r}\frac{sds}{\mathcal{R}_d(s)}}=\pi \int_{0}^{iq_l}\eta=\frac{\pi}{2}\cdot \frac{F(\frac{q_l}{z_d},m)}{K(m)}, 
\end{equation}
where $F(\frac{q_l}{z_d},m)=\int_{0}^{q_l/z_d}\frac{d\theta}{\sqrt{(1-t^2)(1-m^2t^2)}}$ is the incomplete elliptic integral of the first kind. 
\par
Further, define $\mathcal{N}^{(2)}(z):=\mathcal{N}^{(1)}(z)\mathcal{D}_u^{-\sigma_3}(z)$, then the jump condition for $\mathcal{N}^{(2)}(z)$ is given by
\begin{equation}
	\mathcal{N}^{(2)}_{+}(z)=\mathcal{N}^{(2)}_{-}(z)J^{(2)}(z), 
\end{equation}
where 
\begin{equation}
	J^{(2)}(z)=\begin{cases}
		\begin{aligned}
			&\begin{pmatrix}
				0&-e^{-i\delta}\\e^{i\delta}&0
			\end{pmatrix}, &z\in(-q_r,-z_d),\\
			&-i\sigma_2, &z\in (-q_l,q_l), \\
			&\begin{pmatrix}
				0&-e^{i\delta}\\e^{-i\delta}&0
			\end{pmatrix}, &z\in(z_d,q_r).
		\end{aligned}
	\end{cases}
\end{equation}
\par
However, $\mathcal{N}^{(2)}(z)$ still does not coincide with the desired matrix function $\mathcal{M}_{dsw}(z)$, since it has poles at the point $z=0$, which occurs due to the fact that $D_u(z)$ is decaying as $\sqrt{z}$ as $z\to0+$ (that is, from $\Im z>0$) and is increasing as $1/\sqrt{z}$ as $z\to 0-$ (that is, from $\Im z<0$). Thus, the first column of the $\mathcal{N}^{(2)}(z)$ has a pole of the first order when $z\to 0+$ and the second column of the $\mathcal{N}^{(2)}(z)$ has a pole of the second order when $z\to 0-$. 
\par
To remove the singularity of $\mathcal{N}^{(2)}(z)$ at $z=0$, introduce a matrix-valued function of the form
\begin{equation}\label{N3}
	\mathcal{N}^{(3)}(z):=\begin{pmatrix}
		1+\frac{\sigma}{z}&\frac{\sigma}{z}\\
		-\frac{\sigma}{z}&1-\frac{\sigma}{z}
	\end{pmatrix} \mathcal{N}^{(2)}(z)
\end{equation}
with 
\begin{equation}\label{uuu}
	\sigma=-\frac{iq_l}{2}\frac{\mathcal{N}_{11}(iq_l)-\mathcal{N}_{21}(iq_l)}{\mathcal{N}_{11}(iq_l)+\mathcal{N}_{21}(iq_l)}, 
\end{equation}
then it can be shown that $\mathcal{N}^{(3)}(z)$ does not have pole at $z=0$ or elsewhere (except for the branch points) and satisﬁes the same jump condition as $\mathcal{N}^{(2)}(z)$.  Hence, we arrive at the following lemma. 
\par
\begin{lem}
	The matrix-valued function $\mathcal{N}^{(3)}(z)$ defined in (\ref{N3}) is the unique solution to the RHP \ref{rhp_dsw}. 
\end{lem}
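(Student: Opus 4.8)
The plan is to verify that $\mathcal{N}^{(3)}(z)$ meets all three requirements of RHP \ref{rhp_dsw} and then invoke a Liouville argument for uniqueness. Write the left factor in (\ref{N3}) as $B(z):=I+\frac{\sigma}{z}\begin{pmatrix}1\\-1\end{pmatrix}\begin{pmatrix}1&1\end{pmatrix}$, and record three elementary facts: $\det B(z)\equiv 1$, $B(z)$ is holomorphic on $\mathbb{C}\setminus\{0\}$, and $B(z)=I+\mathcal{O}(z^{-1})$ as $z\to\infty$. Since $\mathcal{N}^{(2)}(z)$ already satisfies the jump $J^{(2)}(z)=\mathcal{J}^{(\infty)}_{dsw}(x,t;z)$ on the three bands and tends to $I$ at infinity, and since $B$ is holomorphic across those bands, the product $\mathcal{N}^{(3)}=B\,\mathcal{N}^{(2)}$ inherits exactly the jump $\mathcal{J}^{(\infty)}_{dsw}$ and the normalization $I+\mathcal{O}(z^{-1})$. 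Hence conditions (2) and (3) of RHP \ref{rhp_dsw} hold automatically, and the whole statement reduces to showing that the specific choice of $\sigma$ in (\ref{uuu}) makes $\mathcal{N}^{(3)}$ holomorphic at $z=0$ (condition (1)).

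First I would carry out the local analysis of $\mathcal{N}^{(2)}$ at $z=0$. Because $\gamma(z)=\bigl(z^2/(z^2-q_l^2)\bigr)^{1/4}$ behaves like a constant times $z^{1/2}$ near the origin while $w(z)=\sqrt{z^2-q_l^2}\to iq_l$, the conjugating matrix defining $\mathcal{N}^{(1)}$ contributes a leading $z^{-1/2}$ term proportional to the rank-one matrix $\begin{pmatrix}1&-1\\-1&1\end{pmatrix}$ acting on $\mathcal{N}(iq_l)$; combined with the $z^{\mp 1/2}$ behavior of $\mathcal{D}_u$ recorded above, this produces a simple pole of $\mathcal{N}^{(2)}$ at $z=0$ whose residue is rank one and points in the direction $\mathrm{v}=(1,-1)^{\top}$. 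The key structural observation is that $B$ is built from exactly this direction: writing $B=I+\frac{\sigma}{z}\,\mathrm{v}\,\mathrm{u}^{\top}$ with $\mathrm{u}=(1,1)^{\top}$, the identity $\mathrm{u}^{\top}\mathrm{v}=0$ forces the a priori $z^{-2}$ term of $B\,\mathcal{N}^{(2)}$ to cancel, leaving only a $z^{-1}$ term. Setting that residue to zero is a single scalar equation, and a direct computation shows its solution is precisely (\ref{uuu}), the factor $-iq_l/2$ arising from $\lim_{z\to 0}z\,\gamma(z)^{-2}=iq_l$ and the residue data being supplied by $\mathcal{N}(iq_l)=\mathcal{N}(w(0))$.

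A subtlety I would address explicitly is why one scalar $\sigma$ suffices, even though the pole appears in the first column from the upper side and in the second column from the lower side. This is resolved by the jump $-i\sigma_2$ on $(-q_l,q_l)$: the relation $\mathcal{N}^{(2)}_+=\mathcal{N}^{(2)}_-(-i\sigma_2)$ means that the first column continued from $\Im z>0$ coincides with the second column continued from $\Im z<0$, so the two one-sided poles are merely the two boundary traces of a single analytic function having a single simple pole at $z=0$. Removing that one pole therefore regularizes $\mathcal{N}^{(3)}$ on both sides at once, while the complementary combination is already regular. I would also note that $B$ introduces no new singularity at the branch points $\pm q_l,\pm z_d,\pm q_r$, which is immediate since $B$ is holomorphic and unimodular there.

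Finally, uniqueness follows in the standard way. All three jump matrices have unit determinant, so $\det\mathcal{N}^{(3)}$ has no jump, stays bounded at the branch points (where the singularities are at worst of order $|z\mp\cdot|^{-1/4}$), and equals $1$ at infinity, whence $\det\mathcal{N}^{(3)}\equiv 1$; any other solution $\widehat{\mathcal{N}}$ then gives $\widehat{\mathcal{N}}\,(\mathcal{N}^{(3)})^{-1}$ with no jumps, only removable singularities, and limit $I$ at infinity, so it is identically $I$ by Liouville's theorem. The main obstacle is the local computation at $z=0$: one must track the half-integer powers coming jointly from $\gamma(z)$ and $\mathcal{D}_u(z)$, confirm that the residue is exactly rank one along $(1,-1)^{\top}$, and verify that the single residue-matching equation reproduces the constant in (\ref{uuu}).
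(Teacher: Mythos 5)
Your proposal is correct and follows essentially the route the paper intends: the paper simply asserts that the choice of $\sigma$ in (\ref{uuu}) removes the pole of $\mathcal{N}^{(2)}(z)$ at $z=0$ while preserving the jumps, and your verification fills in exactly those omitted details — in particular, the rank-one residue along $(1,-1)^{\top}$, the cancellation $\mathrm{u}^{\top}\mathrm{v}=0$ killing the $z^{-2}$ term, the glueing of the one-sided poles through the $-i\sigma_2$ jump on $(-q_l,q_l)$, and the limit $\lim_{z\to 0}z\,\gamma(z)^{-2}=iq_l$ reproducing the constant $-iq_l/2$ in (\ref{uuu}) (note the $\mathcal{D}_u$-factors cancel in the ratio, as your computation implicitly uses). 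Your Liouville uniqueness argument is the standard one and is consistent with the (implicit) fourth-root growth restriction at the branch points.
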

\par

\subsection{The local parametrices for $\mathcal{M}_d^{(l)}$ and $\mathcal{M}_d^{(r)}$}\

For a sufficiently small real number $\varepsilon''>0$,
define two small disks
\begin{equation}
	\mathcal{U}_d^{-}:=\{ z\in\mathbb{C}\Big||z+z_d|<\varepsilon''\},\quad \mathcal{U}_d^{+}:=\{ z\in\mathbb{C}\Big||z-z_d|<\varepsilon''\},
\end{equation}
around $-z_d$ and $z_d$, respectively. Then the local model problem near $z_d$ is given below. 
\par
\begin{rhp}
	Find a $2\times2$ matrix-valued function $\mathcal{M}_d^{(r)}(z)=\mathcal{M}_d^{(r)}(x,t;z)$ satisfying: \\
	(1) $\mathcal{M}_d^{(r)}(z)$ is holomorphic for $z\in\mathbb{C}\backslash\left(\bigcup_{j=1}^2(\Gamma_d^{(j)}\cup\overline{\Gamma}_d^{(j)})\cap\mathcal{U}_d^{(r)}\right)$. \\
	(2) For $z\in\bigcup_{j=1}^2(\Gamma_d^{(j)}\cup\overline{\Gamma}_d^{(j)})\cap\mathcal{U}_d^{(r)}$, the jump condition is 
    \begin{equation}
		\mathcal{M}^{(r)}_{d+}(z)=\mathcal{M}^{(r)}_{d-}(z)\mathcal{J}_d^{(r)}(z), 
	\end{equation}
	where 
	\begin{equation}
		\mathcal{J}_d^{(r)}(z)=\begin{cases}
			\begin{aligned}
				&\mathcal{J}_d^{(2)}(z), &z\in(\Gamma_d^{(2)}\cup\overline{\Gamma}_d^{(2)})\cap\mathcal{U}_d^{(r)},\\
				&\begin{pmatrix}
					\frac{\mathcal{D}_{d-}e^{2itg_{d+}}}{\mathcal{D}_{d+}a_+a^*_-}&-e^{i\delta}\\e^{-i\delta}&0
				\end{pmatrix}, &z\in(z_d,z_d+\varepsilon'').\\
			\end{aligned}
		\end{cases}
	\end{equation}
	(3) $\mathcal{M}_d^{(r)}(z)=I+\mathcal{O}(z^{-1})$ as $z\to \infty$. 
\end{rhp}
In fact, due to the symmetry from $-z$ to $z$, the local model problem near $-z_d$ can be given by 
\begin{equation}
	\mathcal{M}_d^{(l)}(z)=\sigma_1\mathcal{M}_d^{(r)}(-z)\sigma_1, 
\end{equation}
so it is enough for us to consider the local model problem around $z_d$ only. 
\par
\begin{rmk}
	Here, to match the jump contour of the Airy model in Appendix \ref{Appendix-B}, we replace the jump contour on the right side of $z_d$ with the one that opens at $z_d+\varepsilon''$. Based on the signs of $\Im g_d$, the resulting error is exponentially small. 
\end{rmk}
\par
To connect the above problem with the Airy parametrix, introduce the conformal mapping from $\mathcal{U}_d^{(r)}$ to a neighborhood of the origin: 
\begin{equation}
    \frac{2}{3}\kappa^{3/2}(z)=-i sgn (\Im z)\left[g_d(z)-g_d(z_d)\right], 
\end{equation}
and the piecewise function 
\begin{equation}
	h(z):=\begin{cases}
		\begin{aligned}
			&\mathcal{D}_d(z)\left(\frac{r^*(z)}{1-r(z)r^*(z)}\right)^{\frac{1}{2}}e^{\frac{i\delta_0}{2}},&z\in \mathcal{U}_d^{(r)}\cap\mathbb{C}_+,\\
			&\mathcal{D}^{-1}_d(z)\left(\frac{r(z)}{1-r(z)r^*(z)}\right)^{\frac{1}{2}}e^{-\frac{i\delta_0}{2}},&z\in \mathcal{U}_d^{(r)}\cap\mathbb{C}_-,
		\end{aligned}
	\end{cases}
\end{equation}
which satisfies the jump conditions
\begin{equation}\label{jump_h}
	h_+(z)h_-(z)=1, \quad z\in(z_d-\varepsilon'',z_d); \qquad h_+(z)=h_-(z)e^{i\delta}, \quad z\in (z_d,z_d+\varepsilon'').  
\end{equation}
Then the jump matrix $\mathcal{J}_d^{(r)}(z)$ takes the following deformation: 
\begin{equation}
	\mathcal{J}_d^{(r)}(z)=
	\begin{cases}
		\begin{aligned}
			& h_-^{-\sigma_3}\sigma_2h_+^{\sigma_3}\sigma_2, &z\in(z_d-\varepsilon'',z_d),\\
			&h_-^{-\sigma_3}\begin{pmatrix}1&e^{-\frac{4}{3}\zeta_d^{\frac{3}{2}}}\\0&1\end{pmatrix}h_+^{\sigma_3}\cdot(-i\sigma_2),&z\in(z_d,z_d+\varepsilon''),\\
			&h^{-\sigma_3}\begin{pmatrix}1&0\\e^{\frac{4}{3}\zeta_d^{\frac{3}{2}}}&1\end{pmatrix}h^{\sigma_3}, &z\in\overline{\Gamma}_d^{(2)}\cap\mathcal{U}_d^{(r)},\\
			& \sigma_2h^{-\sigma_3}\begin{pmatrix}1&0\\e^{\frac{4}{3}\zeta_d^{\frac{3}{2}}}&1\end{pmatrix}h^{\sigma_3}\sigma_2, &z\in \Gamma_d^{(2)}\cap\mathcal{U}_d^{(r)},
		\end{aligned}
	\end{cases}
\end{equation}
where $\zeta_d(z)=t^{2/3}\kappa(z)$. So the local model problem for $\mathcal{M}_d^{(r)}(z)$ is given by 
\begin{equation}
	\mathcal{M}_d^{(r)}(z)=\begin{cases}
		\begin{aligned}
			&\mathcal{H}(z)\mathcal{M}^{(Ai)}\left(\zeta_d(z)\right)h(z)^{\sigma_3}\cdot(-i\sigma_2), &z\in \mathcal{U}_d^{(r)}\cap\mathbb{C}_+, \\
			&\mathcal{H}(z)\mathcal{M}^{(Ai)}\left(\zeta_d(z)\right)h(z)^{\sigma_3}, &z\in \mathcal{U}_d^{(r)}\cap\mathbb{C}_+, 
		\end{aligned}
	\end{cases}
\end{equation}
with 
\begin{equation}
	\mathcal{H}(z):=\begin{cases}
		\begin{aligned}
			&\mathcal{M}_{dsw}^{(\infty)}(z)\cdot(i\sigma_2)\cdot h^{-\sigma_3}e^{\frac{1}{4}i\pi\sigma_3}\frac{1}{\sqrt{2}}\begin{pmatrix}1&-1\\1&1\end{pmatrix}\zeta_d(z)^{\frac{\sigma_3}{4}},& z\in \mathcal{U}_d^{(r)}\cap\mathbb{C}_+,\\
			&\mathcal{M}_{dsw}^{(\infty)}(z)h^{-\sigma_3}e^{\frac{1}{4}i\pi\sigma_3}\frac{1}{\sqrt{2}}\begin{pmatrix}1&-1\\1&1\end{pmatrix}\zeta_d(z)^{\frac{\sigma_3}{4}}, & z\in \mathcal{U}_d^{(r)}\cap\mathbb{C}_-. 
		\end{aligned}
	\end{cases}
\end{equation}
In addition, direct calculation shows that $\mathcal{H}(z)$ is analytic in $\mathcal{U}_d^{(r)}$.
\par
\subsection{The small-norm RHP for $\mathcal{M}_{dsw}^{(err)}$}\

Finally, define the small-norm RHP as 
\begin{equation}\label{err_dsw}
	\mathcal{M}_d^{(err)}(x,t;z)=\begin{cases}
		\begin{aligned}
			& \widetilde{\mathcal{M}}_d(x,t;z)\left[\mathcal{M}_{dsw}^{(\infty)}(x,t;z)\right]^{-1}, &z\in \mathbb{C}\backslash \left(\mathcal{U}_d^{(l)}\cup \mathcal{U}_d^{(r)}\right), \\
			& \widetilde{\mathcal{M}}_d(x,t;z)\left[\mathcal{M}_{d}^{(l)}(x,t;z)\right]^{-1}, &z\in \mathcal{U}_d^{(l)}, \\
			& \widetilde{\mathcal{M}}_d(x,t;z)\left[\mathcal{M}_{d}^{(r)}(x,t;z)\right]^{-1}, &z\in \mathcal{U}_d^{(r)}, 
		\end{aligned}
	\end{cases}
\end{equation}
with the jump contour shown in Figure \ref{e2} and jump matrix given by 
\begin{equation}
	\mathcal{J}_d^{(err)}(x,t;z)=\begin{cases}
		\begin{aligned}
			& \mathcal{N}^{(3)}(z)\widetilde{\mathcal{J}}_d(x,t;z)\left[\mathcal{N}^{(3)}(z)\right]^{-1}, &z\in \bigcup_{j=1}^4(\Gamma_d^{(j)}\cup\overline{\Gamma}_d^{(j)})\backslash \left(\mathcal{U}_d^{(l)}\cup \mathcal{U}_d^{(r)}\right), \\
			& \mathcal{M}_{d}^{(l)}(x,t;z)\left[\mathcal{N}^{(3)}(z)\right]^{-1}, &z\in\partial \mathcal{U}_d^{(l)},\\
			& \mathcal{M}_{d}^{(r)}(x,t;z)\left[\mathcal{N}^{(3)}(z)\right]^{-1}, &z\in\partial \mathcal{U}_d^{(r)}. 
		\end{aligned}
	\end{cases}
\end{equation}
\par
\begin{figure}[htbp]
	\centering
	\begin{tikzpicture}[scale=1]
		\draw[line width=1pt,-{Stealth[length=2mm, width=1.5mm]}] (0,-2.595) -- (0.9,-1.038) node[pos=0.5, right, font=\small, red!80!white]{$\overline{\Gamma}_d^{(2)}\backslash\mathcal{U}_d^{+}$};
		\draw[line width=1pt] (0.6,-1.557) -- (1.5,0) ;
		\draw[line width=1pt,-{Stealth[length=2mm, width=1.5mm]}] (2,0) -- (3,0.5); 
		\draw[line width=1pt] (2.9,0.45) -- (5,1.5) node[pos=0.5, above=2mm, font=\small, red!80!white]{$\Gamma_d^{(1)}\backslash\mathcal{U}_d^{+}$}; 
		\draw[line width=1pt,-{Stealth[length=2mm, width=1.5mm]}] (0,2.595) -- (0.9,1.038) node[pos=0.5, right, font=\small, red!80!white]{$\Gamma_d^{(2)}\backslash\mathcal{U}_d^{+}$};
		\draw[line width=1pt] (0.6,1.557) -- (1.5,0); 
		\draw[line width=1pt,-{Stealth[length=2mm, width=1.5mm]}] (2,0) -- (3,-0.5); 
		\draw[line width=1pt] (2.9,-0.45) -- (5,-1.5) node[pos=0.5, below=2mm, font=\small, red!80!white]{$\overline{\Gamma}_d^{(1)}\backslash\mathcal{U}_d^{+}$}; 
		\draw[line width=1pt,-{Stealth[length=2mm, width=1.5mm]}] (-5,-1.5) -- (-2.9,-0.45) node[pos=0.7, below=2mm, font=\small, red!80!white]{$\overline{\Gamma}_d^{(4)}\backslash\mathcal{U}_d^{-}$};
		\draw[line width=1pt] (-3,-0.5) -- (-2,0); 
		\draw[line width=1pt,-{Stealth[length=2mm, width=1.5mm]}] (-1.5,0) -- (-0.6,1.557) ; 
		\draw[line width=1pt] (-0.9,1.038) -- (0,2.595) node[pos=0.5, left, font=\small, red!80!white]{$\Gamma_d^{(3)}\backslash\mathcal{U}_d^{-}$}; 
		\draw[line width=1pt,-{Stealth[length=2mm, width=1.5mm]}] (-5,1.5) -- (-2.9,0.45) node[pos=0.5, above=2mm, font=\small, red!80!white]{$\Gamma_d^{(4)}\backslash\mathcal{U}_d^{-}$};
		\draw[line width=1pt] (-3,0.5) -- (-2,0); 
		\draw[line width=1pt,-{Stealth[length=2mm, width=1.5mm]}] (-1.5,0) -- (-0.6,-1.557); 
		\draw[line width=1pt] (-0.9,-1.038) -- (0,-2.595) node[pos=0.5, left, font=\small, red!80!white]{$\overline{\Gamma}_d^{(3)}\backslash\mathcal{U}_d^{-}$}; 
		\draw[fill=gray!40!white, draw=white, line width=0.05pt] (1.5,0) circle (0.5cm);
		\draw[fill=gray!40!white, draw=white, line width=0.05pt] (-1.5,0) circle (0.5cm);
		\draw[line width=1pt,dashed, draw=gray] (-4.5,0) -- (4.5,0) node[pos=1, below, font=\small]{$\mathbb{R}$};
		\draw[line width=1pt,-{Stealth[length=2mm, width=1.5mm]}] (2,0) -- (2.5,0);
		\draw[line width=1pt] (2.4,0) -- (3,0) node[pos=1,below, font=\small] {$q_r$};
		\draw[line width=1pt,-{Stealth[length=2mm, width=1.5mm]}] (-3,0) -- (-2.5,0) node[pos=0,below=-1mm, font=\small] {$-q_r$};
		\draw[line width=1pt] (-2.6,0) -- (-2,0);
		\draw[line width=1pt,-{Stealth[length=2mm, width=1.5mm]}] (-0.8,0) -- (0,0) node[pos=0, below=-0.5mm, font=\small] {$-q_l$};
		\draw[line width=1pt] (-0.15,0) -- (0.8,0) node[pos=1, below, font=\small] {$q_l$};
		\draw[fill=black, draw=black, line width=0.05pt] (0.8,0) circle (0.03cm);
		\draw[fill=black, draw=black, line width=0.05pt] (1.5,0) circle (0.03cm);
		\draw[fill=black, draw=black, line width=0.05pt] (-0.8,0) circle (0.03cm);
		\draw[fill=black, draw=black, line width=0.05pt] (-1.5,0) circle (0.03cm);
		\draw[line width=1pt] (1.5,0) circle (0.5cm) node[above=5mm, font=\small] {$\partial \mathcal{U}_d^{+}$}; 
		\draw[line width=0.01pt,-{Stealth[length=2mm, width=1.5mm]}](1.42,0.5) -- (1.41,0.5);
		\draw[line width=1pt] (-1.5,0) circle (0.5cm) node[above=5mm, font=\small] {$\partial \mathcal{U}_d^{-}$};
		\draw[line width=0.01pt,-{Stealth[length=2mm, width=1.5mm]}](-1.61,0.5) -- (-1.62,0.5);
		\draw[fill=black, draw=black, line width=0.05pt] (-1.5,0) circle (0.03cm);
		\draw[fill=black, draw=black, line width=0.05pt] (1.5,0) circle (0.03cm);
		\node[font=\small] at (1.5,-0.2) {$z_d$}; 
		\node[font=\small] at (-1.5,-0.15) {$-z_d$};
	\end{tikzpicture}
	\caption{The jump contour of $\mathcal{M}_d^{(err)}(z)$}
	\label{e2}
\end{figure}
\par
According to result of the Airy parametrix in Appendix \ref{Appendix-B}, it is shown that 
\begin{equation}
	\mathcal{J}_d^{(err)}(z)=I+\mathcal{O}(\zeta_d^{-\frac{3}{2}})=I+\mathcal{O}(t^{-1}), 
\end{equation}
which implies that  
\begin{equation}
	||\langle\cdot\rangle (\mathcal{J}_d-I)||_{L^p(\Sigma)}=\begin{cases}
		\begin{aligned}
			& \mathcal{O}(t^{-1}),&\Sigma=\partial \mathcal{U}_d^{(l)}\cup\partial \mathcal{U}_d^{(r)},\\
			&\mathcal{O}(e^{-ct}), &\Sigma=\left(\Gamma_r\cup\overline{\Gamma}_r\right)\backslash \left(\mathcal{U}_d^{(l)}\cup \mathcal{U}_d^{(r)}\right), 
		\end{aligned}
	\end{cases}
\end{equation}
with $\langle z\rangle:=\sqrt{1+|z|^2}$ for $p\ge1$ and some $c>0$. Thus,  the small norm theory for Riemann–Hilbert problems tells us that 
\begin{equation}
	\mathcal{M}_d^{(err)}=I+\frac{\mathcal{M}_{d,1}^{(err)}(x,t)}{z}+\mathcal{O}(z^{-2}),\qquad \mathcal{M}_{d,1}^{(err)}(x,t)=\mathcal{O}(t^{-1}). 
\end{equation}
\par
Therefore, the formulas (\ref{construct}) and (\ref{err_dsw}) imply that the potential function $q(x,t)$ is formulated by
\begin{equation}\label{redsw}
	q(x,t)=2i \lim_{z\to\infty}(z\mathcal{M}^{(\infty)}_{dsw}(z))_{12}+\left(\mathcal{M}_{d,1}^{(err)}(x,t)\right)_{12}=2i \lim_{z\to\infty} z \mathcal{N}_{12}(w(z))+2i\sigma+\mathcal{O}(t^{-1}), 
\end{equation}
where $\mathcal{N}_{12}(w(z))$ is the (1,2) entry of matrix $\mathcal{N}(w(z))$ in (\ref{mathcal-N-Sol}). The calculation of $\sigma$ is non-trivial and will be given in detail in Appendix \ref{Appendix-C}. This yields the leading-order term $q_{dsw}(x,t)$ in (\ref{qdsw}) and the proof of Theorem \ref{main} for Region \RNum{2} is finished. 
\par

\section{Region \RNum{3} for $x>-4q_l^2t-2q_r^2t+\varepsilon$}\label{section6}

In Region \RNum{3} for $x>-4q_l^2t-2q_r^2t+\varepsilon$, the branch point $z_d$ of the $g$-function defined in (\ref{gd}), which varies with the parameter $\xi$, is located to the left of $q_l$. Then the $g$-function possesses only one cut on the interval $I_r=[-q_r,q_r]$, corresponding to the right plane wave as $t\to \infty$. Thus, by setting $z_1=q_r$, the $g$-function defined in (\ref{ggg0}) takes the form
\begin{equation}\label{g2}
	g_r(\xi;z):=(4z^2+12\xi+2q_r^2)\mathcal{R}_r(z), \quad
	\xi>-\frac{q_r^2}{3}-\frac{q_r^2}{6}. 
\end{equation}
A straightforward calculation reveals that $\pm z_r(\xi)$, where $z_r(\xi)=\sqrt{-3\xi-\frac{q_r^2}{2}}$, are the two zeros of $g_r(\xi;z)$. The location of $z_r(\xi)$ is characterized as follows: 
\begin{equation}
	\begin{cases}
		\begin{aligned}
			& z_r(\xi)\in(0,q_l), &\text{if } -\frac{q_l^2}{3}-\frac{q_r^2}{6}<\xi<-\frac{q_r^2}{6},\\
			&z_r(\xi)\in i\mathbb{R_+}, &\text{if }  \xi>-\frac{q_r^2}{6}. 
		\end{aligned}
	\end{cases}
\end{equation}
The signature tables of $\Im g_r(z)$ are illustrated in Figure \ref{g3} and \ref{g4} for $-\frac{q_l^2}{3}-\frac{q_r^2}{6}<\xi<-\frac{q_r^2}{6}$ and $\xi>-\frac{q_r^2}{6}$, respectively. Additionally, it should be noted that if $-\frac{q_l^2}{3}-\frac{q_r^2}{6}<\xi<-\frac{q_r^2}{6}$, it holds that $\Im g_{r+}>0$ on $(-q_r,-z_r)\cup(z_r,q_r)$, while $\Im g_{r+}<0$ on $(-z_r,z_r)$; if $\xi>-\frac{q_r^2}{6}$, it holds that $\Im g_{r+}>0$ on $(-q_r,q_r)$. 
\par
\subsection{Opening of lenses}\

For different $\xi$ in Region \RNum{3}, introduce two transformations below
\begin{equation}
	\widetilde{\mathcal{M}}_r(x,t;z)=\mathcal{M}(x,t;z)\mathcal{G}_r(x,t;z), \quad -\frac{q_l^2}{3}-\frac{q_r^2}{6}<\xi<-\frac{q_r^2}{6}, 
\end{equation}
\begin{equation}
	\widetilde{\mathcal{M}}_p(x,t;z)=\mathcal{M}(x,t;z)\mathcal{G}_p(x,t;z), \quad \xi>-\frac{q_r^2}{6}, 
\end{equation}
where the matrix functions $\mathcal{G}_r(x,t;z)$ and $\mathcal{G}_p(x,t;z)$ are defined by
\begin{equation}
	\mathcal{G}_r(x,t;z):=\begin{cases}
		\begin{aligned}
			&\begin{pmatrix}
				1&0\\-r_+e^{2itg_{r+}}&1
			\end{pmatrix},&z\in\Omega_r^{(1)}\cup\Omega_r^{(2)},\\
			&\begin{pmatrix}
				1&-r_-^*e^{-2itg_{r-}}\\0&1
			\end{pmatrix}, &z\in\overline{\Omega}_r^{(1)}\cup\overline{\Omega}_r^{(2)},\\
			&\quad I, &elsewhere,
		\end{aligned}
	\end{cases} 
\end{equation}
and
\begin{equation}
	\mathcal{G}_p(x,t;z):=\begin{cases}
		\begin{aligned}
			&\begin{pmatrix}
				1&0\\-re^{2itg_r}&0
			\end{pmatrix}, &z\in\Omega_p&
			\\
			&\begin{pmatrix}
				1&-r^*e^{-2itg_r}\\0&1
			\end{pmatrix}, & z\in\overline{\Omega}_p, &\\
			&\quad I,\qquad  &elsewhere, & 
		\end{aligned}
	\end{cases}
\end{equation}
with the domains $\Omega_r^{(j)}$, $\overline{\Omega}_r^{(j)}~(j=1,2)$ and $\Omega_p, \overline{\Omega}_p$ illustrated in Figure \ref{o3} and \ref{o4}, respectively. Thus, functions $\widetilde{M}_r(x,t;z)$ and $\widetilde{M}_p(x,t;z)$ satisfy the following two RHPs.  
\par
\begin{rhp}
	Find a $2\times2$ matrix-valued function $\widetilde{\mathcal{M}}_r(z)=\widetilde{\mathcal{M}}_r(x,t;z)$ satisfying: \\
	(1) $\widetilde{\mathcal{M}}_r(z)$ is holomorphic for $z\in \mathbb{C}\backslash\left(\bigcup_{j=1}^2(\Gamma_r^{(j)}\cup\overline{\Gamma}_r^{(j)})\cup[-q_r,q_r]\right)$. \\
	(2) For $z\in\bigcup_{j=1}^2(\Gamma_r^{(j)}\cup\overline{\Gamma}_r^{(j)})\cup[-q_r,q_r]$, the jump condition is
	\begin{equation}
		\widetilde{\mathcal{M}}_{r+}(x,t;z)=\widetilde{\mathcal{M}}_{r-}(x,t;z)\widetilde{\mathcal{J}}_r(x,t;z), 
	\end{equation}
	where 
	\begin{equation}
		\widetilde{\mathcal{J}}_r(x,t;z)=\begin{cases}
			\begin{aligned}
				&\begin{pmatrix}
					1&0\\r_+e^{2itg_{r+}}&1
				\end{pmatrix},&z\in\Gamma_r^{(1)}\cup\Gamma_r^{(2)},\\
				&\begin{pmatrix}
					1&-r_-^*e^{-2itg_{r+}}\\0&1
				\end{pmatrix} ,& z\in\overline{\Gamma}_r^{(1)}\cup\overline{\Gamma}_r^{(2)},\\
				&-i\sigma_2, & z\in(-q_r,q_r). 
			\end{aligned}
		\end{cases}
	\end{equation}
	(3) $\widetilde{\mathcal{M}}_r(z)=I+\mathcal{O}(z^{-1})$ as $z\to\infty$ in $\mathbb{C}\backslash\left(\bigcup_{j=1}^2(\Gamma_r^{(j)}\cup\overline{\Gamma}_r^{(j)})\right)$. \\
	(4) Symmetries: 
	\begin{equation}
		\widetilde{\mathcal{M}}_r(z)=\overline{\widetilde{\mathcal{M}}_r(-\bar{z})}=\sigma_1\widetilde{\mathcal{M}}_r(-z)\sigma_1=\sigma_1\overline{\widetilde{\mathcal{M}}_r(\bar{z})}\sigma_1. 
	\end{equation}
\end{rhp}
\par
\begin{figure}[ht]
	\begin{minipage}[b]{0.42\textwidth}  
		\centering
		\includegraphics[width=\linewidth]{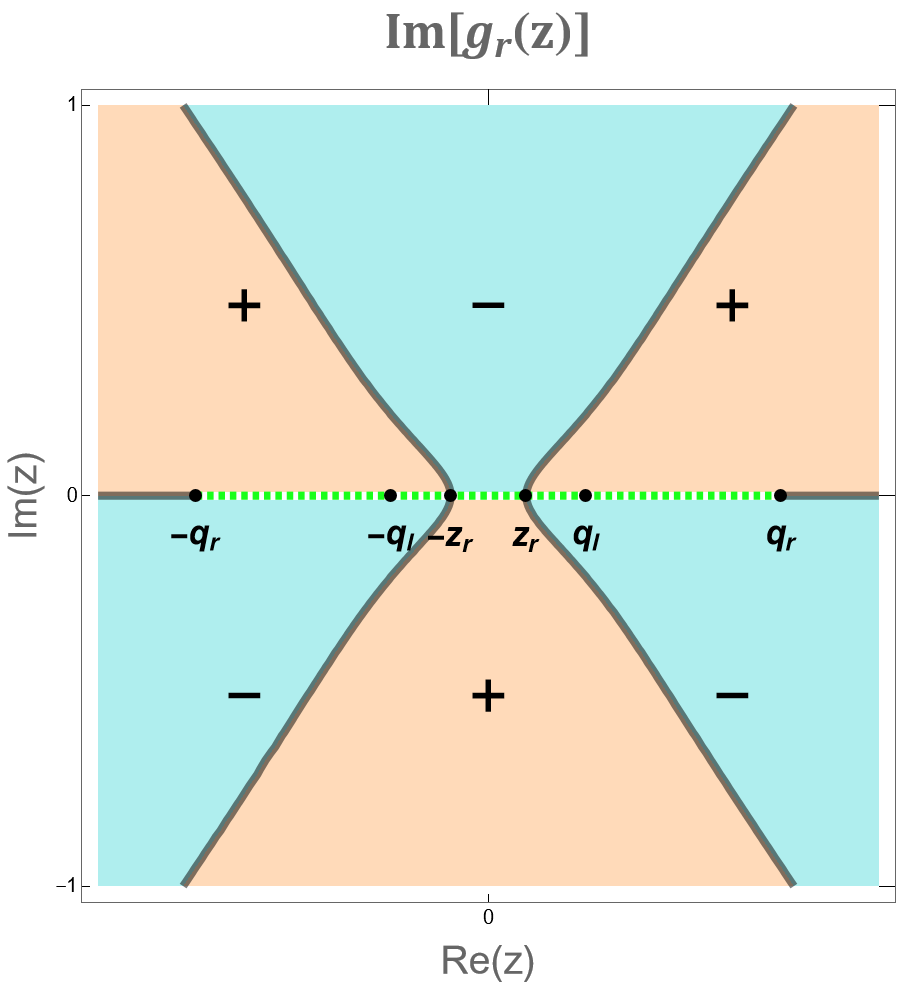}  
		\caption{The signs of $\Im g_r(\xi;z)$ for $-\frac{q_l^2}{3}-\frac{q_r^2}{6}<\xi<-\frac{q_r^2}{6}$.}
		\label{g3}
	\end{minipage} 
	\begin{minipage}[b]{0.56\textwidth}  
		\centering
		\begin{tikzpicture}[scale=1]
			\draw[line width=1pt,dashed, draw=gray] (-4.5,0) -- (4.5,0) node[pos=1, below, font=\small]{$\mathbb{R}$};
			\draw[line width=1pt,-{Stealth[length=2mm, width=1.5mm]}] (-1.5,0) -- (0.15,0) node[pos=-0.15,above=1mm, font=\small] {$-q_l$} node[pos=0.5, below=-0.5mm, font=\small] {$-z_r$};
			\draw[line width=1pt] (-0.15,0) -- (1.5,0) node[pos=0.5, below, font=\small] {$z_r$} node[pos=1.05, above=1mm, font=\small] {$q_l$};
			\draw[line width=1pt,-{Stealth[length=2mm, width=1.5mm]}] (1.5,0) -- (2.5,0.5); 
			\draw[line width=1pt] (2.4,0.45) -- (4.5,1.5) node[pos=0.5, above, font=\small, red!80!white]{$\Gamma_r^{(1)}$}; 
			\draw[line width=1pt,-{Stealth[length=2mm, width=1.5mm]}] (1.5,0) -- (2.5,-0.5); 
			\draw[line width=1pt] (2.4,-0.45) -- (4.5,-1.5) node[pos=0.5, below, font=\small, red!80!white]{$\overline{\Gamma}_r^{(1)}$}; 
			\draw[line width=1pt,-{Stealth[length=2mm, width=1.5mm]}] (-4.5,-1.5) -- (-2.4,-0.45) node[pos=0.7, below, font=\small, red!80!white]{$\overline{\Gamma}_r^{(2)}$};
			\draw[line width=1pt] (-2.5,-0.5) -- (-1.5,0); 
			\draw[line width=1pt,-{Stealth[length=2mm, width=1.5mm]}] (-4.5,1.5) -- (-2.4,0.45) node[pos=0.5, above, font=\small, red!80!white]{$\Gamma_r^{(2)}$};
			\draw[line width=1pt] (-2.5,0.5) -- (-1.5,0); 
			\draw[line width=1pt,-{Stealth[length=2mm, width=1.5mm]}] (1.5,0) -- (2.4,0);
			\draw[line width=1pt] (2.1,0) -- (3,0) node[pos=1,below, font=\small] {$q_r$};
			\draw[line width=1pt,-{Stealth[length=2mm, width=1.5mm]}] (-3,0) -- (-2.1,0) node[pos=0,below=-1mm, font=\small] {$-q_r$};
			\draw[line width=1pt] (-2.4,0) -- (-1.5,0);
			\draw[fill=black, draw=black, line width=0.05pt] (0.8,0) circle (0.03cm);
			\draw[fill=black, draw=black, line width=0.05pt] (1.5,0) circle (0.03cm);
			\draw[fill=black, draw=black, line width=0.05pt] (-0.8,0) circle (0.03cm);
			\draw[fill=black, draw=black, line width=0.05pt] (-1.5,0) circle (0.03cm);
			\path (-5,-3.5) rectangle (5,3.5);
			\node[font=\small,orange!40!black] at (3.5,0.5) {$\Omega_r^{(1)}$};
			\node[font=\small,orange!40!black] at (-3.5,0.5) {$\Omega_r^{(2)}$};
			\node[font=\small,blue!40!black] at (3.5,-0.5) {$\overline{\Omega}_r^{(1)}$};
			\node[font=\small,blue!40!black] at (-3.5,-0.5) {$\overline{\Omega}_r^{(2}$};
		\end{tikzpicture}
		\caption{The opening of lenses in Region \RNum{3} for $-\frac{q_l^2}{3}-\frac{q_r^2}{6}<\xi<-\frac{q_r^2}{6}$.}
		\label{o3}
	\end{minipage}
\end{figure}
\par
\begin{figure}[ht]
	\begin{minipage}[b]{0.42\textwidth}  
		\centering
		\includegraphics[width=\linewidth]{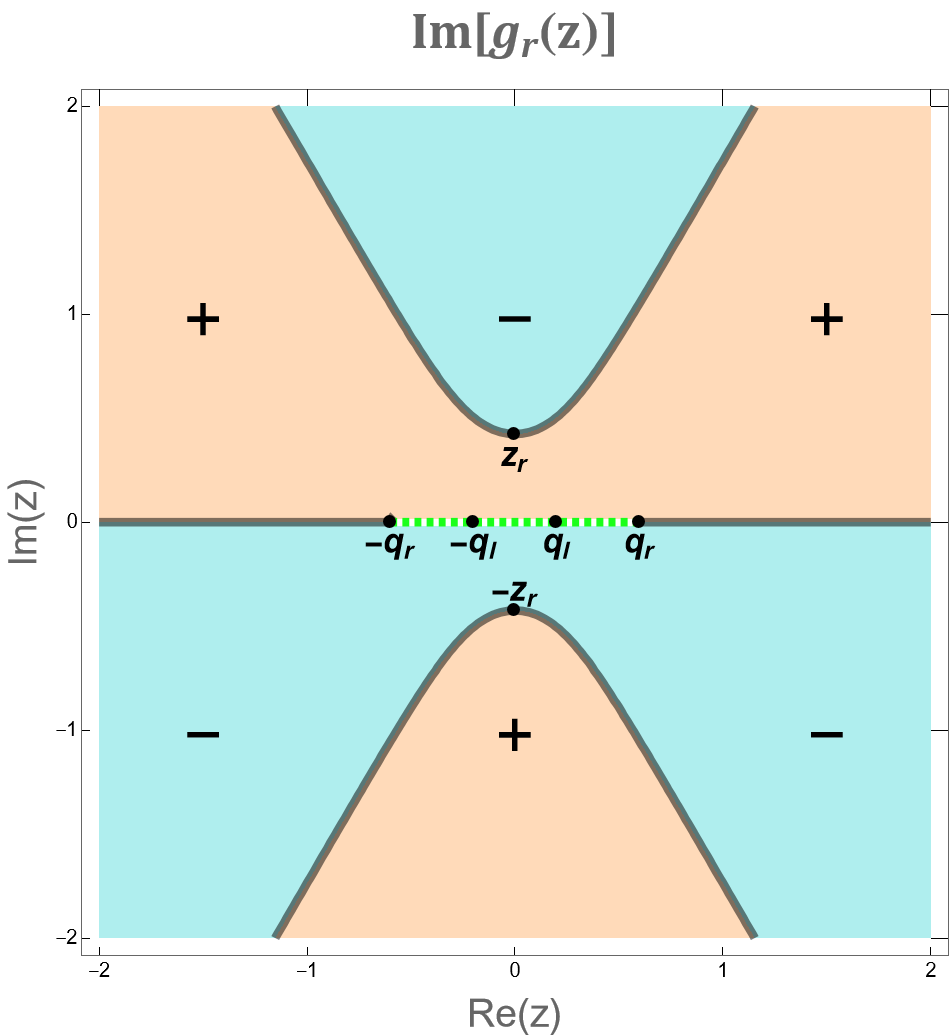}  
		\caption{The signs of $\Im g_r(\xi;z)$ for $\xi>-\frac{q_r^2}{6}$.}
		\label{g4}
	\end{minipage}
	\begin{minipage}[b]{0.56\textwidth}  
		\centering
		\begin{tikzpicture}[scale=1]
			\draw[line width=1pt,dashed, draw=gray] (-4,0) -- (4,0) node[pos=1, below, font=\small]{$\mathbb{R}$};
			\draw[line width=1pt,-{Stealth[length=2mm, width=1.5mm]}] (-4,1) -- (0.5,1);
			\draw[line width=1pt,-{Stealth[length=2mm, width=1.5mm]}] (-4,-1) -- (0.5,-1);
			\draw[line width=1pt] (0,1) -- (4,1) node[pos=1,below, font=\small,red!80!white] {$\Gamma_p$};
			\draw[line width=1pt] (0,-1) -- (4,-1) node[pos=1,below, font=\small, red!80!white] {$\overline{\Gamma}_p$};
			\draw[fill=black, draw=black, line width=0.05pt] (0.8,0) circle (0.03cm); \node[font=\small,black] at (0.8,-0.2) {$q_l$};
			\draw[fill=black, draw=black, line width=0.05pt] (1.5,0) circle (0.03cm);
			\node[font=\small,black] at (1.5,-0.2) {$q_r$};
			\draw[fill=black, draw=black, line width=0.05pt] (-0.8,0) circle (0.03cm);
			\node[font=\small,black] at (-0.8,-0.2) {$-q_l$};
			\draw[fill=black, draw=black, line width=0.05pt] (-1.5,0) circle (0.03cm);
			\node[font=\small,black] at (-1.5,-0.2) {$-q_r$};
			\draw[fill=black, draw=black, line width=0.05pt] (0,2) circle (0.03cm); \node[font=\small,black] at (0.2,2) {$z_r$};
			\draw[fill=black, draw=black, line width=0.05pt] (0,-2) circle (0.03cm); \node[font=\small,black] at (0.2,-2) {$-z_r$};
			\path (-5,-3.5) rectangle (5,3.5);
			\node[font=\small,orange!40!black] at (0,0.5) {$\Omega_p$};
			\node[font=\small,blue!40!black] at (0,-0.5) {$\overline{\Omega}_p$};
		\end{tikzpicture}
		\caption{The opening of lenses in Region \RNum{3} for $\xi>-\frac{q_r^2}{6}$.}
		\label{o4}
	\end{minipage}
\end{figure}
\par
\begin{rhp}
	Find a $2\times2$ matrix-valued function $\widetilde{\mathcal{M}}_p(z)=\widetilde{\mathcal{M}}_p(x,t;z)$ satisfying:\\ 
	(1) $\widetilde{\mathcal{M}}_p(z)$ is holomorphic for $z\in\mathbb{C}\backslash([-q_r,q_r]\cup \Gamma_p\cup \overline{\Gamma}_p)$.  \\
	(2) For $z\in(-q_r,q_r)\cup \Gamma_p\cup \overline{\Gamma}_p$, the jump condition is
	\begin{equation}
		\widetilde{\mathcal{M}}_{p+}(x,t;z)=\widetilde{\mathcal{M}}_{p-}(x,t;z)\widetilde{\mathcal{J}}_p(x,t;z), 
	\end{equation}
	where 
	\begin{equation}
		\widetilde{\mathcal{M}}_p(x,t;z)=\begin{cases}
			\begin{aligned}
				&\begin{pmatrix}
					1&0\\re^{2itg_r}&0
				\end{pmatrix}, &z\in \Gamma_p&
				\\
				&\begin{pmatrix}
					1&-r^*e^{-2itg_r}\\0&1
				\end{pmatrix},& z\in\overline{\Gamma}_p,& \\
				&-i\sigma_2, &z\in (-q_r,q_r). & 
			\end{aligned}
		\end{cases}
	\end{equation}
	(3) $\widetilde{\mathcal{M}}_p(z)=I+\mathcal{O}(z^{-1})$ as $z\to\infty$ in $\mathbb{C}\backslash(\Gamma_p\cup \overline{\Gamma}_p)$. \\
	(4) Symmetries: 
	\begin{equation}
		\widetilde{\mathcal{M}}_p(z)=\overline{\widetilde{\mathcal{M}}_p(-\bar{z})}=\sigma_1\widetilde{\mathcal{M}}_p(-z)\sigma_1=\sigma_1\overline{\widetilde{\mathcal{M}}_p(\bar{z})}\sigma_1. 
	\end{equation}
\end{rhp}

\subsection{The outer parametrix for $\mathcal{M}_r^{(\infty)}(z)$}\

According to the distribution of the sign of $\Im g_r$ in Figure \ref{g3} and \ref{g4}, when $t$ is large enough, both $\widetilde{\mathcal{J}}_r(x,t;z)$ and $\widetilde{\mathcal{J}}_p(x,t;z)$ converge uniformly to $-i\sigma_2$ on the interval $(-q_r,q_r)$ and to the identity matrix elsewhere. Thus, it is natural to consider the following model RHP, whose unique solution is $\mathcal{M}_r^{(\infty)}(z)=\mathcal{E}_r(z)$, where $\mathcal{E}_r(z)$ defined in (\ref{epsilon}). 
\par
\begin{rhp}\label{model_r}
	Find a $2\times2$ matrix-valued function $\mathcal{M}_r^{(\infty)}(z)$ satisfying: \\
	(1) $\mathcal{M}_r^{(\infty)}(z)$ is holomorphic for $z\in \mathbb{C}\backslash[-q_r,q_r]$. \\
	(2) For $z\in(-q_r, q_r)$, the jump condition is
	\begin{equation}
		\mathcal{M}_{r+}^{(\infty)}(z)=\mathcal{M}_{r-}^{(\infty)}(z)\cdot(-i\sigma_2).
	\end{equation}
	(3) $\mathcal{M}_r^{(\infty)}(z)=I+\mathcal{O}(z^{-1})$ as $z\to\infty$. 
\end{rhp}
\par

\subsection{The small norm RHPs for $\mathcal{M}_r^{(err)}$ and $\mathcal{M}_p^{(err)}$}\

For $-\frac{q_l^2}{3}-\frac{q_r^2}{6}<\xi<-\frac{q_r^2}{6}$, define the small norm RHP
\begin{equation}\label{err_right}
	\mathcal{M}_r^{(err)}(x,t;z)=\widetilde{\mathcal{M}}_r(x,t;z)\left[\mathcal{M}_r^{(\infty)}(z)\right]^{-1},
\end{equation}
whose jump matrix takes the form
\begin{equation}
	\mathcal{J}_r^{(err)}(x,t;z)=\mathcal{E}_r(z)\widetilde{\mathcal{J}}_r(x,t;z)\mathcal{E}_r(z)^{-1}, \quad z\in\bigcup_{j=1}^{2}(\Gamma_r^{j}\cup\overline{\Gamma}_r^{j}). 
\end{equation}
Similarly, for $\xi>-\frac{q_r^2}{6}$, define
\begin{equation}\label{err_para}
	\mathcal{M}_p^{(err)}(x,t;z)=\widetilde{\mathcal{M}}_p(x,t;z)\left[\mathcal{M}_r^{(\infty)}(z)\right]^{-1}, 
\end{equation}
whose jump matrix is 
\begin{equation}
	\mathcal{J}_p^{(err)}(x,t;z)=\mathcal{E}_r(z)\widetilde{\mathcal{J}}_p(x,t;z)\mathcal{E}_r(z)^{-1}, \quad z\in\Gamma_p\cup\overline{\Gamma}_p. 
\end{equation}
\par
It can be shown that both $\mathcal{J}_r^{(err)}(z)$ and $\mathcal{J}_p^{(err)}(z)$ are exponentially close to identity as $t\to\infty$, that is 
\begin{equation}
	||\langle \cdot \rangle (\mathcal{J}_r^{(err)}-I)||_{L^q(\Gamma_r\cup\overline{\Gamma}_r)}=\mathcal{O}(e^{-ct}),\quad ||\langle \cdot \rangle (\mathcal{J}_p^{(err)}-I)||_{L^q(\Gamma_p\cup\overline{\Gamma}_p)}=\mathcal{O}(e^{-ct}),
\end{equation}
for $q\ge1$  and some constant $c>0$. Then the small norm theory for Riemann–Hilbert problem indicates that 
\begin{equation}
	\begin{aligned}
		&\mathcal{M}_r^{(err)}(x,t;z)=I+\frac{\mathcal{M}_{r,1}^{(err)}(x,t)}{z}+\mathcal{O}(z^{-2}), \qquad\mathcal{M}_{r,1}^{(err)}(x,t)=\mathcal{O}(e^{-ct}), \\
		&\mathcal{M}_p^{(err)}(x,t;z)=I+\frac{\mathcal{M}_{p,1}^{(err)}(x,t)}{z}+\mathcal{O}(z^{-2}), \qquad \mathcal{M}_{p,1}^{(\infty)}(x,t)=\mathcal{O}(e^{-ct}). 
	\end{aligned}
\end{equation}
\par
Finally, the formulas (\ref{construct}), (\ref{err_right}) and (\ref{err_para}) result in the long-time asymptotic solution $q(x,t)$ for $t\to \infty$ as
\begin{equation}
	q(x,t)=
	\begin{cases}
		\begin{aligned}
			&2i\lim_{z\to\infty}(z\mathcal{M}_r^{(\infty)}(z))_{12}+\left(\mathcal{M}_{r,1}^{(err)}(x,t)\right)_{12}=q_r+\mathcal{O}(e^{-ct}),&-\frac{q_l^2}{3}-\frac{q_r^2}{6}<\xi<-\frac{q_r^2}{6}, \\
			&2i\lim_{z\to\infty}(z\mathcal{M}_r^{(\infty)}(z))_{12}+\left(\mathcal{M}_{p,1}^{(err)}(x,t)\right)_{12}=q_r+\mathcal{O}(e^{-ct}), &\xi>-\frac{q_r^2}{6}. 
		\end{aligned}
	\end{cases}
\end{equation}
This completes the proof of Theorem \ref{main} for Region \RNum{3}. \\
\par
\par

\appendix

\section{Traveling wave solution of the defocusing mKdV equation}\label{Appendix-A}
\par
Substituting the traveling wave ansatz 
\begin{equation}
	q(x,t)=q(\phi), \quad \phi=x-\mathcal{V}t 
\end{equation}
into the defocusing mKdV equation (\ref{mkdv}) and integrating twice, the following ordinary differential equation is obtained
\begin{equation}\label{q-polynomial}
	q_{\phi}^2=q^4+\mathcal{V}q^2+Bq+A, 
\end{equation}
where $A$ and $B$ are constants of integration. Assuming the polynomial on the right side of (\ref{q-polynomial}) admits four real roots ordered as $q_1\le q_2\le q_3\le q_4$, the ordinary differential equation (\ref{q-polynomial}) is expressed in factorized form: 
\begin{equation}
	q_{\phi}^2=(q-q_1)(q-q_2)(q-q_3)(q-q_4), \quad \sum_{i=1}^{4}q_i=0, \quad \mathcal{V}=\sum_{i<k} q_i q_k. 
\end{equation}
The periodic solution corresponds to oscillations within the interval
\begin{equation}
	q_2\le q \le q_3, 
\end{equation}
where the polynomial on the right side of (\ref{q-polynomial}) is non-negative. The phase variable $\phi$ is then given by the elliptic integral of the form 
\begin{equation}
	\phi-\phi_0=\int_{q}^{q_3}\frac{ds}{\sqrt{(s-q_1)(s-q_2)(q_3-s)(q_4-s)}}, 
\end{equation}
with $\phi_0$ denoting the initial phase. This integral yields the explicit periodic solution of the defocusing mKdV equation (\ref{mkdv}) in terms of Jacobi elliptic functions $\mathrm{sn}(u|m)$ and $\mathrm{cn}(u|m)$: 
\begin{equation}\label{periodic-solu-1}
	q_{per}(x,t)=q(\phi)=q_2+\frac{(q_3-q_2)\mathrm{cn}^2(\sqrt{(q_3-q_1)(q_4-q_2)}(x-\mathcal{V}t-\phi_0)/2|m)}{1-\left(\frac{q_3-q_2}{q_4-q_2}\right)\mathrm{sn}^2(\sqrt{(q_3-q_1)(q_4-q_2)}(x-\mathcal{V}t-\phi_0)/2|m)},
\end{equation}
where the modulus $m$ of the Jacobi elliptic function is defined by 
\begin{equation}
	m^2=\frac{(q_3-q_2)(q_4-q_1)}{(q_4-q_2)(q_3-q_1)}. 
\end{equation}
\par
Introducing the variable transformation $\{q_1,q_2,q_3,q_4\} \mapsto \{z_1,z_2,z_3\}$ via
\begin{equation}
	q_1=-z_1-z_2-z_3, \quad q_2=z_1+z_2-z_3, \quad q_3=z_1+z_3-z_2, \quad q_4=z_2+z_3-z_2, 
\end{equation}
with $z_1<z_2<z_3$. This choice of transformation relates the elliptic periodic solution to the band spectrum of the Lax operator in (\ref{lax}). This simplifies the periodic solution (\ref{periodic-solu-1}) into
\begin{equation}
	q_{per}(x,t)=z_1+z_2-z_3+\frac{2(z_3-z_2)(z_3-z_1)\mathrm{cn}^2(\sqrt{z_3^2-z_1^2}(x-\mathcal{V}t-\phi_0)|m)}{(z_3-z_1)-(z_3-z_2)\mathrm{sn}^2(\sqrt{z_3^2-z_1^2}(x-\mathcal{V}t-\phi_0)|m)},
\end{equation}
where the velocity of the traveling wave and modulus of the Jacobi elliptic function are transformed into 
\begin{equation}
	\mathcal{V}=z_1^2+z_2^2+z_3^3,\quad m^2=\frac{z_3^2-z_2^2}{z_3^2-z_1^2}. 
\end{equation}
For the specific case of $z_1=q_l$, $z_2=z_d$, $z_3=q_r$ and $\phi_0=x_0$, this periodic solution reduces to the leading-order term $q_{dsw}(x,t)$ in (\ref{qdsw}) in Theorem \ref{main} immediately, that is  
\begin{equation}
	q_{dsw}(x,t)=q_{per}(x,t). 
\end{equation}  
\par

\section{Two class of local models}\label{Appendix-B}

For completeness, this section presents the essential details of two canonical local model RHPs, which play a pivotal role in the construction and asymptotic analysis of local parametrices. As these models are well-established in the literature \cite{DZ1993,Jenkins2015}, we focus solely on stating the relevant results to be employed.
\par

\subsection{The parabolic cylinders model} \label{Appendix-B-1}\

Given a complex parameter $\rho \in \mathbb{C}$, define the auxiliary quantity $\nu(\rho)=-(2\pi)^{-1}\ln(1 - |\rho|^2)$, which induces the following RHP: 

\begin{rhp}\label{pc}
	Find a $2\times2$ complex matrix function $\mathcal{M}^{(PC)}(\zeta)=\mathcal{M}^{(PC)}(\zeta;\rho)$ that satisfies: \\
	(1) $\mathcal{M}^{(PC)}(\zeta)$ is holomorphic for $\zeta\in\mathbb{C}\backslash\Gamma^{(PC)}$, where
		$\Gamma^{(PC)}:=(e^{-3i\pi/4}\infty,e^{i\pi/4}\infty)\cup(e^{3i\pi/4}\infty,e^{-i\pi/4}\infty)$
    is shown in Figure \ref{pcline}.\\
	(2) For $\zeta\in\Gamma^{(PC)}$, the jump condition is  
	\begin{equation}
		\mathcal{M}_+^{(PC)}(\zeta)=\mathcal{M}_-^{(PC)}(\zeta)\mathcal{J}^{(PC)}(\zeta), 
	\end{equation}
	where 
	\begin{equation}
		\mathcal{J}^{(PC)}(\zeta;\rho)=\begin{cases}
			\begin{aligned}
				&\begin{pmatrix}
					1&0\\\rho\zeta^{-2i\nu}e^{i\zeta^2/2}&1
				\end{pmatrix},& \zeta\in(0,e^{i\pi/4}\infty),\\
				&\begin{pmatrix}
					1&-\bar{\rho}\zeta^{2i\nu}e^{-i\zeta^2/2}\\0&1
				\end{pmatrix},&\zeta\in (0,e^{-i\pi/4}\infty),\\
				&\begin{pmatrix}
					1&\frac{-\bar{\rho}}{1-|\rho|^2}\zeta^{2i\nu}r^{-i\zeta^2/2}\\0&1
				\end{pmatrix},&\zeta\in(e^{3i\pi/4}\infty,0),\\
				&\begin{pmatrix}
					1&0\\ \frac{\rho}{1-|\rho|^2}&1
				\end{pmatrix},& \zeta\in(e^{-3i\pi/4}\infty,0). 
			\end{aligned}
		\end{cases}
	\end{equation}
	(3) As $\zeta\to\infty$, $\mathcal{M}^{(PC)}(\zeta;\rho)=I+\zeta^{-1}\cdot\mathcal{M}_1^{(PC)}(\rho)+\mathcal{O}(\zeta^{-2})$. 
\end{rhp}
\begin{figure}[ht]
	\begin{minipage}[b]{0.48\textwidth}  
		\centering
		\begin{tikzpicture}[scale=1]
			\draw[line width=1pt,dashed, draw=gray] (-3,0) -- (3,0) node[pos=1, below, font=\small]{$\mathbb{R}$} node[pos=0.5,below,font=\small]{$O$};
			\draw[line width=1pt,-{Stealth[length=2mm, width=1.5mm]}] (-2.5,-2.5) -- (-1,-1);
			\draw[line width=1pt,-{Stealth[length=2mm, width=1.5mm]}] (-2,-2) -- (1.3,1.3);
			\draw[line width=1pt] (0,0) -- (2.5,2.5); 
			\draw[line width=1pt,-{Stealth[length=2mm, width=1.5mm]}] (-2.5,2.5) -- (-1,1);
			\draw[line width=1pt,-{Stealth[length=2mm, width=1.5mm]}] (-2,2) -- (1.3,-1.3);
			\draw[line width=1pt] (0,0) -- (2.5,-2.5); 
			\path (-3,-3) rectangle (3,3);
			\draw (0.5,0) arc (0:45:0.5) node[pos=0.7,right] {$\frac{\pi}{4}$};
		\end{tikzpicture}
		\caption{The jump contour for $\mathcal{M}^{(PC)}(\zeta)$}
		\label{pcline}
	\end{minipage}
	\begin{minipage}[b]{0.48\textwidth}  
		\centering
		\begin{tikzpicture}[scale=1]
			\draw[line width=1pt,dashed, draw=gray] (-2.5,0) -- (2.5,0) node[pos=1, below, font=\small]{$\mathbb{R}$} node[pos=0.5,below,font=\small]{$O$};
			\draw[line width=1pt,-{Stealth[length=2mm, width=1.5mm]}] (-2.5,0) -- (-1,0);
			\draw[line width=1pt,-{Stealth[length=2mm, width=1.5mm]}] (-1.25,0) -- (1.25,0);
			\draw[line width=1pt] (1,0) -- (2.5,0); 
			\draw[line width=1pt,-{Stealth[length=2mm, width=1.5mm]}] (-1.5,-2.595) -- (-1,-1.73);
			\draw[line width=1pt] (-1.5,-2.595) -- (0,0);
			\draw[line width=1pt,-{Stealth[length=2mm, width=1.5mm]}] (-1.5,2.595) -- (-1,1.73);
			\draw[line width=1pt] (-1.5,2.595) -- (0,0);
			\path (-2,-2) rectangle (2,2);
			\draw (0.5,0) arc (0:120:0.5) node[pos=0.7,above] {$\frac{2\pi}{3}$};
		\end{tikzpicture}
		\caption{The jump contour for $\mathcal{M}^{(Ai)}(\zeta)$}
		\label{ailine}
	\end{minipage}
\end{figure}
\par

It has been shown that the RHP \ref{pc} admits a unique solution \cite{DZ1993}. For $\rho=0$, the solution is trivial, while for $\rho\neq0$, it can be explicitly constructed by using the parabolic cylinder function $\mathcal{D}_a(z)$, which satisfies the differential equation $\partial_z^2\mathcal{D}_a + \left(1/2-z^2/4+a\right)\mathcal{D}_a = 0$. To be specific, the solution to RHP \ref{pc} takes the form
\begin{equation}
	\mathcal{M}^{(PC)}(\zeta;\rho)=\Psi_1(\zeta;\rho)\mathcal{P}(\zeta;\rho)e^{i\frac{\zeta^2}{4}\sigma_3}\zeta^{-i\nu\sigma_3}, 
\end{equation}
where
\begin{equation}
	\Psi_1(\zeta;\rho)=\begin{cases}
		\begin{aligned}
			&\begin{pmatrix}
				e^{-\frac{3\pi\nu}{4}}\mathcal{D}_{i\nu}\left(e^{-\frac{3i\pi}{4}}\zeta\right)&-i\beta e^{\frac{\pi}{4}(\nu-i)}\mathcal{D}_{-i\nu-1}\left(e^{-\frac{i\pi}{4}}\zeta\right)\\i\bar{\beta} e^{-\frac{3\pi}{4}(\nu+i)}\mathcal{D}_{i\nu-1}\left(e^{-\frac{3i\pi}{4}}\zeta\right)&e^{\frac{\pi\nu}{4}}\mathcal{D}_{-i\nu}\left(e^{-\frac{i\pi}{4}}\zeta\right)
			\end{pmatrix},&\Im\zeta>0,\\
			&\begin{pmatrix}
				e^{\frac{\pi\nu}{4}}\mathcal{D}_{i\nu}\left(e^{\frac{i\pi}{4}}\zeta\right)&-i\beta e^{-\frac{3\pi}{4}(\nu-i)}\mathcal{D}_{-i\nu-1}\left(e^{\frac{3i\pi}{4}}\zeta\right)\\i\bar{\beta} e^{\frac{\pi}{4}(\nu+i)}\mathcal{D}_{i\nu-1}\left(e^{\frac{i\pi}{4}}\zeta\right)&e^{-\frac{3\pi\nu}{4}}\mathcal{D}_{-i\nu}\left(e^{\frac{3i\pi}{4}}\zeta\right)
			\end{pmatrix},&\Im\zeta<0, 
		\end{aligned}
	\end{cases}
\end{equation}
with $\beta=\beta(\rho)=\frac{\sqrt{2\pi}e^{i\pi/4}e^{-\pi\nu/2}}{\rho\cdot\Gamma(-i\nu)}$ and
\begin{equation}
	\mathcal{P}(\zeta;\rho)=\begin{cases}
		\begin{aligned}
			&\begin{pmatrix}
				1&0\\-\rho&1
			\end{pmatrix}, &\arg\zeta\in(0,\frac{\pi}{4}), \\
			&\begin{pmatrix}
				1&-\bar{\rho}\\0&1
			\end{pmatrix},&\arg\zeta\in(-\frac{\pi}{4},0),\\
			&\begin{pmatrix}
				1&\frac{\bar{\rho}}{1-|\rho|^2}\\0&1, 
			\end{pmatrix}, &\arg\zeta\in(\frac{3\pi}{4},\pi),\\
			&\begin{pmatrix}
				1&0\\ \frac{\rho}{1-|\rho|^2}&1,
			\end{pmatrix}, &\arg\zeta\in(-\pi,-\frac{3\pi}{4}),\\
			&\qquad I,&\arg\zeta\in(\frac{\pi}{4},\frac{3\pi}{4})\cup(-\frac{3\pi}{4},-\frac{\pi}{4}).
		\end{aligned}
	\end{cases} 
\end{equation}
Moreover, the leading-order correction matrix $\mathcal{M}_1^{(PC)}(\rho)$ has the explicit form:
\begin{equation}
	\mathcal{M}_1^{(PC)}(\rho)=\begin{pmatrix}
		0&-i\beta(\rho)\\i\overline{\beta(\rho)}&0
	\end{pmatrix} . 
\end{equation}
\par

\subsection{The Airy model} \label{Appendix-B-2}\

Consider the following RHP for function $\mathcal{M}^{(Ai)}(\zeta)$:

\begin{rhp}\label{ai}
	Find a $2\times2$ complex matrix-valued function $\mathcal{M}^{(Ai)}(\zeta)$ that satisfies: \\
	(1) $\mathcal{M}^{(Ai)}(\zeta)$ is holomorphic for $\zeta\in\mathbb{C}\backslash\Gamma^{(Ai)}$, where $\Gamma^{(Ai)}=\mathbb{R}\cup e^{2i\pi/3}\mathbb{R}_+\cup e^{-2i\pi/3}\mathbb{R}_+$ is shown in Figure \ref{ailine}. \\
	(2) For $\zeta\in\Gamma^{(Ai)}$, the jump condition is
	\begin{equation}
		\mathcal{M}^{(Ai)}_+(\zeta)=\mathcal{M}^{(Ai)}_-(\zeta)\mathcal{J}^{(Ai)}(\zeta), 
	\end{equation} 
	where 
	\begin{equation}
		\mathcal{J}^{(Ai)}(\zeta)=\begin{cases}
			\begin{aligned}
				&\begin{pmatrix}
					1&e^{-\frac{4}{3}\zeta^{3/2}}\\0&1
				\end{pmatrix}, &\zeta\in(0,+\infty), \\
				&\begin{pmatrix}
					1&0\\e^{\frac{4}{3}\zeta^{3/2}}&1
				\end{pmatrix}, &\zeta\in(0,e^{2i\pi/3}\infty)\cup(0,e^{-2i\pi/3}\infty), \\
				&\begin{pmatrix}
					0&1\\-1&0
				\end{pmatrix}, &\zeta\in(-\infty,0). 
			\end{aligned}
		\end{cases}
	\end{equation}
\end{rhp}
This RHP admits a unique solution expressed by Airy function:
\begin{equation}
	\mathcal{M}^{(Ai)}(\zeta)=\Psi_2(\zeta)\cdot\mathcal{S}(\zeta)\cdot e^{\frac{2}{3}\zeta^{3/2}\sigma_3},
\end{equation}
where the piecewise constant matrix $\mathcal{S}(\zeta)$ is defined by
\begin{equation}
	\mathcal{S}(\zeta)=\begin{cases}
		\quad I, & \arg\zeta\in(-\frac{2\pi}{3},\frac{2\pi}{3}), \\
		\begin{pmatrix}1&0\\-1&1\end{pmatrix}, & \arg\zeta\in(\frac{2\pi}{3},\pi), \\
		\begin{pmatrix}1&0\\1&1\end{pmatrix}, & \arg\zeta\in(-\pi,-\frac{2\pi}{3}),
	\end{cases}
\end{equation}
and the fundamental solution $\Psi_2(\zeta)$ takes the form
\begin{equation}
	\Psi_2(\zeta)=\begin{cases}
		\begin{pmatrix}
			\text{Ai}(\zeta) & \omega^2\text{Ai}(\omega^2\zeta) \\
			\text{Ai}'(\zeta) & \omega^2\text{Ai}'(\omega^2\zeta)
		\end{pmatrix}e^{-i\pi\sigma_3/6}, & \Im\zeta>0, \\
		\begin{pmatrix}
			\text{Ai}(\zeta) & -\omega\text{Ai}(\omega\zeta) \\
			\text{Ai}'(\zeta) & -\text{Ai}'(\omega\zeta)
		\end{pmatrix}e^{-i\pi\sigma_3/6}, & \Im\zeta<0,
	\end{cases}
\end{equation}
where $\omega=e^{2i\pi/3}$ and $\text{Ai}(\zeta)$ satisfies the Airy equation $\text{Ai}''(\zeta)=\zeta\text{Ai}(\zeta)$. 
\par
As $\zeta\to\infty$, the asymptotic expansion of Airy function is formulated by
\begin{equation}
	\text{Ai}(\zeta)\sim\frac{e^{-i\pi/12}}{2\sqrt{\pi}}\zeta^{-1/4}\left[\begin{pmatrix}
		1 & i \\ -i & 1
	\end{pmatrix}+\sum_{n=1}^\infty\begin{pmatrix}
		(-1)^ns_n & is_n \\ (-1)^{n+1}it_n & t_n
	\end{pmatrix}\left(\frac{4}{3}\zeta^{3/2}\right)^{-n}\right]e^{-\frac{2}{3}\zeta^{3/2}},
\end{equation}
with coefficients
\begin{equation}
	s_n=\frac{\Gamma(3n+\frac{1}{2})}{54^n n!\,\Gamma(n+\frac{1}{2})}, \quad t_n=-\frac{6n+1}{6n-1}s_n.
\end{equation}
\par

\section{The detailed calculations of reconstruction formula in (\ref{redsw})}\label{Appendix-C}

This section calculates the term $\sigma$ in reconstruction formula (\ref{redsw}) and establishes that the leading-order term $2i\lim_{z\to\infty}z\mathcal{N}_{12}(z)+2i\sigma$ in (\ref{redsw}) coincides precisely with the dispersive shock wave solution $q_{dsw}(x,t)$ given in (\ref{qdsw}). This equivalence is derived from fundamental identities governed by the Jacobi theta functions and Jacobi elliptic functions.  
\par
In what follows, we begin by computing the quantity $\sigma$. From relation (\ref{delta2}), it is derived that 
\begin{equation}
	\mathcal{A}(iq_l)=\int_{q_l}^{iq_l}\eta=-\frac{1}{2}+\frac{\tau}{4}+\frac{\delta_2}{\pi}, 
\end{equation}
which yields the expression for $\sigma$ as: 
\begin{equation}
	\begin{aligned}
		\sigma&=-\frac{iq_l}{2}\frac{[\alpha(iq_l)+\alpha^{-1}(iq_l)]\frac{\Theta(-\frac{1}{2}-\frac{\delta}{\pi})}{\Theta(-\frac{1}{2}+\frac{\delta_2}{\pi})}+i[\alpha(iq_l)-\alpha^{-1}(iq_l)]\frac{\Theta(-\frac{1}{2}-\frac{\delta}{\pi}+\frac{\tau}{2})}{\Theta(-\frac{1}{2}+\frac{\delta_2}{\pi}+\frac{\tau}{2})}e^{-i\tilde{\delta}}}{[\alpha(iq_l)+\alpha^{-1}(iq_l)]\frac{\Theta(-\frac{1}{2}-\frac{\delta}{\pi})}{\Theta(-\frac{1}{2}+\frac{\delta_2}{\pi})}-i[\alpha(iq_l)-\alpha^{-1}(iq_l)]\frac{\Theta(-\frac{1}{2}-\frac{\delta}{\pi}+\frac{\tau}{2})}{\Theta(-\frac{1}{2}+\frac{\delta_2}{\pi}+\frac{\tau}{2})}e^{-i\tilde{\delta}}}\\
		&=-\frac{iq_l}{2}\frac{i\frac{\alpha(iq_l)+\alpha^{-1}(iq_l)}{\alpha(iq_l)-\alpha^{-1}(iq_l)}\frac{\Theta(-\frac{1}{2}+\frac{\delta_2}{\pi}+\frac{\tau}{2})}{\Theta(-\frac{1}{2}+\frac{\delta_2}{\pi})}e^{i\delta_2}-\frac{\Theta(-\frac{1}{2}-\frac{\delta}{\pi}+\frac{\tau}{2})}{\Theta(-\frac{1}{2}-\frac{\delta}{\pi})}e^{-i\delta}}{i\frac{\alpha(iq_l)+\alpha^{-1}(iq_l)}{\alpha(iq_l)\alpha^{-1}(iq_l)}\frac{\Theta(-\frac{1}{2}+\frac{\delta_2}{\pi}+\frac{\tau}{2})}{\Theta(-\frac{1}{2}+\frac{\delta_2}{\pi})}e^{i\delta_2}+\frac{\Theta(-\frac{1}{2}-\frac{\delta}{\pi}+\frac{\tau}{2})}{\Theta(-\frac{1}{2}-\frac{\delta}{\pi})}e^{-i\delta}}. 
	\end{aligned}
\end{equation}
To simplify this expression, introduce the four Jacobi theta functions: 
\begin{equation}
	\begin{aligned}
		\theta_1(z | \tau)&:=-i\sum_{n=-\infty}^{\infty}(-1)^n q^{(n+\frac{1}{2})^2}e^{i(2n+1)z},& \quad &\theta_2(z | \tau):=\sum_{n=-\infty}^{\infty} q^{(n+\frac{1}{2})^2}e^{i(2n+1)z},\\
		\theta_3(z | \tau)&:=\sum_{n=-\infty}^{\infty} q^{n^2}e^{2inz}=\Theta(\frac{z}{\pi}),& \quad  &\theta_4(z | \tau):=\sum_{n=-\infty}^{\infty}(-1)^n  q^{n^2}e^{2inz},
	\end{aligned}
\end{equation}
where the nome $q=e^{i\pi \tau}$ and the parameter $\tau$ given by (\ref{tau}). Recalling the connection between Jacobi theta functions and Jacobi elliptic functions, we have
\begin{equation}
	\mathrm{sn}\left(u|\tilde{m}\right)=\frac{\theta_3(0|\tau)}{\theta_2(0|\tau)}\cdot\frac{\theta_1(z|\tau)}{\theta_4(z|\tau)},\quad \mathrm{cn}(u|\tilde{m})=\frac{\theta_4(0|\tau)}{\theta_2(0|\tau)}\cdot\frac{\theta_2(z|\tau)}{\theta_4(z|\tau)}, \quad \mathrm{dn}(u|\tilde{m})=\frac{\theta_4(0|\tau)}{\theta_3(0|\tau)}\cdot\frac{\theta_3(z|\tau)}{\theta_4(z|\tau)}, 
\end{equation}
with $u=\theta^2_3(0|\tau)\cdot z=\frac{2K(\tilde{m})}{\pi}\cdot z$. 
\par
In addition, applying Landen’s transformations and half-argument formulas, it yields that 
\begin{equation}
	\begin{cases}
		\begin{aligned}
			\mathrm{sn}\left((1+m')u|\tilde{m}\right)=&(1+m')\mathrm{sn}(u|m)\mathrm{cd}(u|m), \\
			\mathrm{cn}\left((1+m')u|\tilde{m}\right)=&\mathrm{nd}(u|m)-(1+m')\mathrm{sn}(u|m)\mathrm{sd}(u|m), \\
			\mathrm{dn}\left((1+m')u|\tilde{m}\right)=&\mathrm{nd}(u|m)+(m'-1)\mathrm{sn}(u|m)\mathrm{sd}(u|m), 
		\end{aligned}
	\end{cases} \qquad \tilde{m}=\frac{1-m'}{1+m'}, 
\end{equation}
and 
\begin{equation}
	\begin{cases}
		\begin{aligned}
			&\mathrm{sn}^2\left(\frac{u}{2}\Big|m\right)=\frac{1-\mathrm{cn}(u|m)}{1+\mathrm{dn}(u|m)},\\
			&\mathrm{cn}^2\left(\frac{u}{2}\Big|m\right)=\frac{\mathrm{cn}(u|m)+\mathrm{dn}(u|m)}{1+\mathrm{dn}(u|m)},\\   &\mathrm{dn}^2\left(\frac{u}{2}\Big|m\right)=\frac{\mathrm{cn}(u|m)+\mathrm{dn}(u|m)}{1+\mathrm{cn}(u|m)}.
		\end{aligned}
	\end{cases} 
\end{equation}
Then using the periodicity properties of Jacobi theta functions, it is obtained that
\begin{equation}
	\begin{aligned}
		\frac{\Theta(0)\Theta(-\frac{1}{2}-\frac{\delta}{\pi}+\frac{\tau}{2})}{\Theta(\frac{\tau}{2})\Theta(-\frac{1}{2}-\frac{\delta}{\pi})}e^{-i\delta}= &\frac{\theta_3(0|\tau)\theta_3(-\delta+\frac{\pi}{2}+\frac{\pi\tau}{2}|\tau)}{\theta_3(\frac{\pi\tau}{2}|\tau)\theta_3(-\delta+\frac{\pi}{2}|\tau)}e^{-i\delta} 
		=\frac{\theta_3(0|\tau)\theta_1(-\delta|\tau)iq^{-\frac{1}{4}}e^{i\delta}}{\theta_2(0|\tau)\theta_4(-\delta|\tau)q^{-\frac{1}{4}}}e^{-i\delta}\\
		=&(-i)\cdot\frac{\theta_3(0|\tau)\theta_1(\delta|\tau)}{\theta_2(0|\tau)\theta_4(\delta|\tau)}
		=(-i)\cdot \mathrm{sn}\left(2K(\tilde{m})\frac{\delta}{\pi}\bigg|\tilde{m}\right)\\
		=&(-i)\cdot(1+m') \mathrm{sn}\left(K(m)\frac{\delta}{\pi}\bigg|m\right) \mathrm{cd}\left(K(m)\frac{\delta}{\pi}\bigg|m\right). 
	\end{aligned}
\end{equation}
Furthermore, applying equation (\ref{delta2}) with $F:=F({q_l}/{z_d},m)$ gives
\begin{equation}
	\begin{aligned}
		\frac{\Theta(0)\Theta(-\frac{1}{2}+\frac{\delta_2}{\pi}+\frac{\tau}{2})}{\Theta(\frac{\tau}{2})\Theta(-\frac{1}{2}+\frac{\delta_2}{\pi})}e^{i\delta_2}=&i\cdot \mathrm{sn}\left(2K(\tilde{m})\frac{\delta_2}{\pi}|\tilde{m}\right)=i\cdot(1+m')\mathrm{sn}\left(\frac{F}{2}\Bigg|m\right)\mathrm{cd}\left(\frac{F}{2}\Bigg|m\right)\\
		=&i\cdot\frac{(1+m')\mathrm{sn}\left(F\Big|m\right)}{1+\mathrm{dn}\left(F\Big|m\right)}=i\cdot\frac{q_l(\tilde{q}+\tilde{d})}{\tilde{q}z_d+\tilde{d}q_r}:=i\cdot \mathcal{K}_1. 
	\end{aligned}
\end{equation}
Also the following identity holds 
\begin{equation}
	i\cdot\frac{\alpha(iq_l)+\alpha^{-1}(iq_l)}{\alpha(iq_l)-\alpha^{-1}(iq_l)}=\frac{q_l^2+\tilde{q}\tilde{d}+z_dq_r}{q_l(\tilde{q}-\tilde{d})}=\frac{q_r\tilde{d}+z_d\tilde{q}}{q_l(q_r-z_d)}=\frac{\tilde{q}+\tilde{d}}{\mathcal{K}_1\cdot(q_r-z_d)}. 
\end{equation}
Combining these results, the term $\sigma$ is reduced to 
\begin{equation}
	\begin{aligned}
		\sigma=&-\frac{iq_l}{2}\cdot\frac{i\cdot \frac{\tilde{q}+\tilde{d}}{q_r-z_d}-(-i)\cdot(1+m')\mathrm{sn}\left(K(m)\frac{\delta}{\pi}|m\right) \mathrm{cd}\left(K(m)\frac{\delta}{\pi}|m\right)}{i\cdot\frac{\tilde{q}+\tilde{d}}{q_r-z_d}+(-i)\cdot(1+m')\mathrm{sn}\left(K(m)\frac{\delta}{\pi}|m\right) \mathrm{cd}\left(K(m)\frac{\delta}{\pi}|m\right)}\\
		=&-\frac{iq_l}{2}\cdot\frac{\tilde{q}+(q_r-z_d)\mathrm{sn}\left(K(m)\frac{\delta}{\pi}|m\right) \mathrm{cd}\left(K(m)\frac{\delta}{\pi}|m\right)}{\tilde{q}-(q_r-z_d)\mathrm{sn}\left(K(m)\frac{\delta}{\pi}|m\right) \mathrm{cd}\left(K(m)\frac{\delta}{\pi}|m\right)}. 
	\end{aligned}
\end{equation}
\par
We now return to compute the term $2i\lim_{z\to\infty} z \mathcal{N}_{12}(w(z))$. By exploiting the connection between Jacobi theta functions and elliptic functions, it is derived that 
\begin{equation}
	\begin{aligned}
		2i\lim_{z\to\infty} z \mathcal{N}_{12}(w(z))&=(\tilde{q}-\tilde{d})\cdot\frac{\Theta(0)\Theta\left(-\frac{\tilde{\delta}}{\pi}-\frac{\tau}{2}\right)}{\Theta(\frac{\tilde{\delta}}{\pi})\Theta\left(-\frac{\tau}{2}\right)}e^{i\tilde{\delta}}=(\tilde{q}-\tilde{d})\cdot\frac{\theta_3(0)\theta_3(\tilde{\delta}+\frac{\pi\tau}{2})}{\theta_3(\frac{\pi\tau}{2})\theta_3(\tilde{\delta})}e^{i\tilde{\delta}}\\
		&=(\tilde{q}-\tilde{d})\cdot\frac{\theta_3(0)\theta_2(\tilde{\delta})q^{-\frac{1}{4}}e^{-i\tilde{\delta}}}{\theta_2(0)\theta_3(\tilde{\delta})q^{-\frac{1}{4}}}e^{i\tilde{\delta}}=(\tilde{q}-\tilde{d})\cdot\frac{\theta_3(0)\theta_2(\tilde{\delta})}{\theta_2(0)\theta_3(\tilde{\delta})}\\
		&=(\tilde{q}-\tilde{d})\cdot\mathrm{cd}\left(2K(\tilde{m})\frac{\tilde{\delta}}{\pi}\Bigg|\tilde{m}\right)=(\tilde{q}-\tilde{d})\cdot\mathrm{cd}\left(2K(\tilde{m})\frac{\delta+\delta_2}{\pi}\Bigg|\tilde{m}\right). 
	\end{aligned}
\end{equation}
\par
To proceed, recall the basic formulas of Jacobi elliptic function below
\begin{equation}
	\mathrm{cd}(u+v|\tilde{m})=\frac{\mathrm{cn}(u|\tilde{m})\mathrm{dn}(u|\tilde{m})\mathrm{cn}(v|\tilde{m})\mathrm{dn}(v|\tilde{m})-\tilde{m}'^2\mathrm{sn}(u|\tilde{m})\mathrm{sn}(v|\tilde{m})}{1-\tilde{m}^2\mathrm{sn}^2(u|\tilde{m})-\tilde{m}^2\mathrm{sn}^2(v|\tilde{m})-\tilde{m}^2\mathrm{sn}^2(u|\tilde{m})\mathrm{sn}^2(v|\tilde{m})}, 
\end{equation}

\begin{equation}
\mathrm{sn}^2(u|m)+\mathrm{cn}^2(u|m)=1,\qquad \mathrm{sn}^2(u|m)+m^2\mathrm{dn}^2(u|m)=1. 
\end{equation}
\par
Furthermore, after some non-trivial computations, key auxiliary identities are obtained as follows
\begin{equation}
	\begin{aligned}		              
        &\mathrm{cn}\left(2K(\tilde{m})\frac{\delta_2}{\pi}\bigg|\tilde{m}\right)\mathrm{dn}\left(2K(\tilde{m})\frac{\delta_2}{\pi}\bigg|\tilde{m}\right)=\mathrm{nd}^2\left(\frac{F}{2}\Bigg|m\right)-2\mathrm{sd}^2\left(\frac{F}{2}\Bigg|m\right)+m^2\mathrm{sn}^2\left(\frac{F}{2}\Bigg|m\right)\mathrm{sd}^2\left(\frac{F}{2}\Bigg|m\right)\\
		=&\frac{1+\mathrm{cn}(F|m)}{\mathrm{cn}(F|m)+\mathrm{dn}(F|m)}-\frac{2\mathrm{sn}^2(F|m)}{(\mathrm{cn}(F|m)+\mathrm{dn}(F|m))(1+\mathrm{dn}(F|m))}+\frac{m^2(1-\mathrm{cn}(F|m))\mathrm{sn}^2(F|m)}{(\mathrm{cn}(F|m)+\mathrm{dn}(F|m))(1+dn(F|m))^2}\\
		=&\frac{\tilde{q}(z_d+\tilde{d})}{\tilde{d}(q_r+\tilde{q})}-\frac{2q_l^2\tilde{q}^2}{\tilde{d}(\tilde{q}+q_r)(\tilde{q}z_d+dq_r)}+\frac{\tilde{q}q_l^2(z_d-\tilde{d})(\tilde{q}^2-\tilde{d}^2)}{\tilde{d}(\tilde{q}+q_r)(\tilde{q}z_d+\tilde{d}q_r)^2}:=\mathcal{K}_2, 
	\end{aligned}
\end{equation}
and
\begin{equation}
	\begin{cases}
		(q_r-z_d)\cdot\left[1-\tilde{m}^2\cdot {(\mathcal{K}_1)}^2\right]=\tilde{q}^2\cdot\tilde{m}^2(1+m')^2\cdot\left[1-{(\mathcal{K}_1)}^2\right], \\
		\tilde{q}(\tilde{q}-\tilde{d})\cdot\tilde{m}^2(1+m')^2\cdot {\mathcal{K}_1}=2q_l(q_r-z_d)\cdot\left[1-\tilde{m}^2\cdot {(\mathcal{K}_1)}^2\right],\\
		(\tilde{q}-\tilde{d})\cdot \mathcal{K}_2=(q_r-z_d)\cdot\left[1-\tilde{m}^2\cdot {(\mathcal{K}_1)}^2\right]. 
	\end{cases}
\end{equation}
\par
Substituting all these identities into the reconstruction formula $2i\lim_{z\to\infty}z\mathcal{N}_{12}(w(z))+2i\sigma$, we ultimately obtain that
\allowdisplaybreaks
\begin{equation}
    \begin{split}
	\begin{aligned}
		&2i\lim_{z\to\infty}z\mathcal{N}_{12}(w(z))+2i\sigma\\
		=&(\tilde{q}-\tilde{d})\cdot{\mathcal{K}_2}\cdot\frac{\mathrm{nd}^2\left(K(m)\frac{\delta}{\pi}|m\right)-2\mathrm{sd}^2\left(K(m)\frac{\delta}{\pi}|m\right)+m^2\mathrm{sn}^2\left(K(m)\frac{\delta}{\pi}|m\right)\mathrm{sd}^2\left(K(m)\frac{\delta}{\pi}|m\right)}{\left[1-\tilde{m}^2\cdot {(\mathcal{K}_1)}^2\right]-\tilde{m}^2(1+m')^2\cdot \left[1-{(\mathcal{K}_1)}^2\right]\cdot \mathrm{sn}^2\left(K(m)\frac{\delta}{\pi}|m\right)\mathrm{cd}^2\left(K(m)\frac{\delta}{\pi}|m\right)}\\
		&-(\tilde{q}-\tilde{d})\cdot {\mathcal{K}_1}\cdot\frac{\tilde{m}^2(1+m')\cdot \mathrm{sn}\left(K(m)\frac{\delta}{\pi}|m\right)\mathrm{cd}\left(K(m)\frac{\delta}{\pi}|m\right)}{\left[1-\tilde{m}^2\cdot {(\mathcal{K}_1)}^2\right]-\tilde{m}^2(1+m')^2\cdot\left[1-{(\mathcal{K}_1)}^2\right]\cdot \mathrm{sn}^2\left(K(m)\frac{\delta}{\pi}|m\right)\mathrm{cd}^2\left(K(m)\frac{\delta}{\pi}|m\right)}\\
		&+{q_l} \cdot \frac{\tilde{q}+(q_r-z_d)\mathrm{sn}\left(K(m)\frac{\delta}{\pi}|m\right) \mathrm{cd}\left(K(m)\frac{\delta}{\pi}|m\right)}{\tilde{q}-(q_r-z_d)\mathrm{sn}\left(K(m)\frac{\delta}{\pi}|m\right) \mathrm{cd}\left(K(m)\frac{\delta}{\pi}|m\right)}\\
		=&(\tilde{q}-\tilde{d})\cdot {\mathcal{K}_2}\cdot\frac{\tilde{q}^{2}\cdot\left[1-2\mathrm{sn}^2\left(K(m)\frac{\delta}{\pi}|m\right)+m^2\mathrm{sn}^4\left(K(m)\frac{\delta}{\pi}|m\right)\right]}{\left[1-\tilde{m}^2\cdot {(\mathcal{K}_1)}^2\right]\cdot\left[\tilde{q}^2-2q_r(q_r-z_d)\mathrm{sn}^2\left(K(m)\frac{\delta}{\pi}|m\right) +(q_r-z_d)^2\mathrm{sn}^4\left(K(m)\frac{\delta}{\pi}|m\right) \right]}\\
		&-\tilde{q}^{2}\cdot\left\{(\tilde{q}-\tilde{d})\cdot\tilde{m}^2(1+m')^2\cdot {\mathcal{K}_1}-2q_l\tilde{q}^{-1}(q_r-z_d)\left[1-\tilde{m}^2\cdot {(\mathcal{K}_1)}^2\right]\right\}\\
		&\quad\times\frac{\mathrm{sn}\left(K(m)\frac{\delta}{\pi}|m\right)\mathrm{cd}\left(K(m)\frac{\delta}{\pi}|m\right)}{\left[1-\tilde{m}^2\cdot {(\mathcal{K}_1)}^2\right]\cdot\left[\tilde{q}^2-(q_r-z_d)^2\mathrm{sn}^2\left(K(m)\frac{\delta}{\pi}|m\right) \mathrm{cd}^2\left(K(m)\frac{\delta}{\pi}|m\right)\right]}\\
		&+{q_l}\cdot\frac{\tilde{q}^2+(q_r-z_d)^2\mathrm{sn}^2\left(K(m)\frac{\delta}{\pi}|m\right) \mathrm{cd}^2\left(K(m)\frac{\delta}{\pi}|m\right)}{\tilde{q}^2-(q_r-z_d)^2\mathrm{sn}^2\left(K(m)\frac{\delta}{\pi}|m\right) \mathrm{cd}^2\left(K(m)\frac{\delta}{\pi}|m\right)}\\
		=&-{q_l}+\frac{\tilde{q}^2(q_r-z_d)\left[1-2\mathrm{sn}^2\left(K(m)\frac{\delta}{\pi}|m\right)+m^2\mathrm{sn}^4\left(K(m)\frac{\delta}{\pi}|m\right)\right]}{\tilde{q}^2-2q_r(q_r-z_d)\mathrm{sn}^2\left(K(m)\frac{\delta}{\pi}|m\right) +(q_r-z_d)^2\mathrm{sn}^4\left(K(m)\frac{\delta}{\pi}|m\right) }\\
		&+\frac{2q_l\tilde{q}^2\left[1-m^2\mathrm{sn}^2\left(K(m)\frac{\delta}{\pi}|m\right)\right]}{\tilde{q}^2-2q_r(q_r-z_d)\mathrm{sn}^2\left(K(m)\frac{\delta}{\pi}|m\right) +(q_r-z_d)^2\mathrm{sn}^4\left(K(m)\frac{\delta}{\pi}|m\right)}\\
		=&{q_r}+{q_l}-{z_d}+\frac{2(q_r-z_d)(q_l-z_d)\left[(q_l+q_r)\mathrm{sn}^2\left(K(m)\frac{\delta}{\pi}|m\right)-(q_r-z_d)\mathrm{sn}^4\left(K(m)\frac{\delta}{\pi}|m\right)\right]}{(q_r+q_l)(q_r-q_l)-(q_r-z_d)^2\mathrm{sn}^2\left(K(m)\frac{\delta}{\pi}|m\right) -(q_r-z_d)^2\mathrm{sn}^4\left(K(m)\frac{\delta}{\pi}|m\right)}\\
		=&{q_l}+{z_d}-{q_r}+\frac{2(q_r-z_d)(q_r-q_l)\mathrm{cn}^2\left(K(m)\frac{\delta}{\pi}|m\right)}{(q_r-q_l)-(q_r-z_d)\mathrm{sn}^2\left(K(m)\frac{\delta}{\pi}|m\right)},
	\end{aligned}
    \end{split}
\end{equation}
which is just the leading-order term $q_{dsw}(x,t)$ given in (\ref{qdsw}). \\
\par

{\bf Acknowledgments}
\par
This work was supported by the National Natural Science Foundation of China through grant Nos. 12371247 and 12431008.\\
\par
{\bf Data availability}
\par
 Data sharing is not applicable to this paper as no new data were created or analyzed in this study.\\
 \par
{\bf Declarations}  \\
 \par
{\bf Conflict of interest}
\par
  The authors declare that they have no conflict of interest.
\par

\bibliographystyle{plain}

\end{document}